\numberwithin{equation}{section}
\newtheorem{theorem}[equation]{Theorem}
\newtheorem{proposition}[equation]{Proposition}
\newtheorem{lemma}[equation]{Lemma}
\newtheorem{corollary}[equation]{Corollary}
\theoremstyle{definition}
\newtheorem{rmk}[equation]{Remark}
\newenvironment{remark}[1][]{\begin{rmk}[#1] \pushQED{\qed}}{\popQED \end{rmk}}
\newtheorem{rmks}[equation]{Remarks}
\newtheorem{eg}[equation]{Example}
\newenvironment{example}[1][]{\begin{eg}[#1] \pushQED{\qed}}{\popQED \end{eg}}
\newtheorem{defn}[equation]{Definition}
\newenvironment{definition}[1][]{\begin{defn}[#1]\pushQED{\qed}}{\popQED \end{defn}}
\newtheorem{ques}[equation]{Question}
\newtheorem{notn}[equation]{Notation}
\newcommand{\cC}{\mathcal{C}}
\newcommand{\cD}{\mathcal{D}}
\newcommand{\cO}{\mathcal{O}}
\newcommand{\bc}{\mathbf{c}}
\newcommand{\bd}{\mathbf{d}}
\newcommand{\br}{\mathbf{r}}
\newcommand{\zO}{\ensuremath{\Omega}}
\newcommand{\za}{\ensuremath{\alpha}}
\newcommand{\zb}{\ensuremath{\beta}}
\newcommand{\zd}{\ensuremath{\delta}}
\newcommand{\zg}{\ensuremath{\gamma}}
\newcommand{\ZZ}{\mathbb{Z}}
\newcommand{\CC}{\mathbb{C}}
\renewcommand{\phi}{\varphi}
\renewcommand{\emptyset}{\varnothing}
\renewcommand{\tilde}[1]{\widetilde{#1}}
\newcommand{\comment}[1]{}
\newcommand{\setst}[2]{\left\{#1\, \mid \, #2 \right\}}
\def\Ddots{\mathinner{\mkern1mu\raise\p@
\vbox{\kern7\p@\hbox{.}}\mkern2mu
\raise4\p@\hbox{.}\mkern2mu\raise7\p@\hbox{.}\mkern1mu}}
\DeclareMathOperator{\im}{im} 
\DeclareMathOperator{\coker}{coker}
\newcommand{\Hom}{\operatorname{Hom}}
\DeclareMathOperator{\rank}{rank}
\DeclareMathOperator{\rep}{rep}
\DeclareMathOperator{\Mat}{Mat}
\newcommand{\GL}{\mathbf{GL}}
\newcommand{\Gr}{\mathbf{Gr}}
\newcommand{\onto}{\twoheadrightarrow}
\newcommand{\into}{\hookrightarrow}
\newcommand{\xto}[1]{\xrightarrow{#1}}
\renewcommand{\rep}{\mathtt{rep}}
\newcommand{\red}[1]{\textcolor{red}{#1}}
\begin{document}

\title{Type $D$ quiver representation varieties,\\double Grassmannians, and symmetric varieties}
\author{Ryan Kinser}
\address{Ryan Kinser \\ Department of Mathematics\\ University of Iowa \\ Iowa City, IA, USA }
\email{ryan-kinser@uiowa.edu}
\thanks{This work was supported by a grant from the Simons Foundation (636534, RK)}

\author{Jenna Rajchgot}
\address{Jenna Rajchgot\\ Department of Mathematics and Statistics\\ McMaster University\\ Hamilton, ON, Canada}
\email{rajchgoj@mcmaster.ca}
\thanks{J.R. is partially supported by NSERC Grant RGPIN-2017-05732.}

\begin{abstract}
We unify aspects of the equivariant geometry of type $D$ quiver representation varieties, double Grassmannians, and symmetric varieties $GL(a+b)/GL(a)\times GL(b)$; in particular we translate results about singularities of orbit closures, combinatorics of orbit closure containment, and torus equivariant $K$-theory between these three families.
These results are all obtained from our generalization of a construction of Zelevinsky for type $A$ quivers to the type $D$ setting.
More precisely, we give explicit embeddings with nice properties of homogeneous fiber bundles over type $D$ quiver representation varieties into these symmetric varieties.
\end{abstract}

\subjclass[2020]{
16G20, 14M15, 14M27}

\keywords{Type D quiver, double Grassmannian, symmetric variety, equivariant K-theory}

\maketitle

\tableofcontents

\section{Introduction}
\subsection{Context}\label{sect:context}
This paper continues our program of unifying problems about the equivariant geometry of representation varieties of Dynkin quivers with the corresponding problems for Schubert varieties in multiple flag varieties.  The problems we consider are:
\begin{enumerate}
\item characterizing singularities of orbit closures;
\item combinatorial description of the poset of orbit closures with respect to containment (generalized Bruhat order);
\item formulas for classes of orbit closures in equivariant $K$-theory.
\end{enumerate}
Solutions to these problems for orbit closures in type $A$ quiver representation varieties are governed by solutions to the corresponding problems for Schubert varieties in type $A$ flag varieties (see Section \ref{sect:literature}), and vice versa (see Remark \ref{SchubToTypeA}). 
In the present work, we establish an analogous result for type $D$ quiver representation varieties. We unify the above three problems for orbit closures in type $D$ quiver representation varieties, double Grassmannians (which are particular multiple flag varieties), and certain symmetric quotients of general linear groups.

\medskip

Representation varieties of quivers naturally arise throughout various areas of mathematics, including:
\begin{itemize}
\item commutative algebra, as generalizations of determinantal varieties;
\item Lie theory, through Lusztig's geometric construction of canonical bases;
\item algebraic geometry, through the theory of degeneracy loci of vector bundle maps;
\item representation theory of finite-dimensional associative algebras, as varieties parametrizing framed representations.
\end{itemize}
A more detailed list of references for these can be found in the introduction of the article \cite{kinserICRA}.  A literature comparison specific to this paper is found in Section \ref{sect:literature} below.

On the other hand, symmetric varieties \cite{Springer85} and the more general spherical varieties \cite{Brion95,Brion12,Perrin14} provide a richly developed geometry for us to draw on, due to their long history going back to the classical theory of symmetric spaces \cite{Cartan26,Cartan27}.  
These varieties, particularly spherical multiple flag varieties \cite{MWZ,AP16}, share many features in common with  classical flag varieties.

In this paper, we work with double Grassmannians $X=\Gr_a(k^n) \times \Gr_b(k^n)$, equipped with the diagonal right action of $G=GL(n)$.  In the special case $n=a+b$, we can take the block diagonal subgroup $K=GL(a) \times GL(b) \leq G$, and consider the quotient (known as a symmetric variety\footnote{This terminology often assumes working over a field of characteristic not 2, but all our results hold in arbitrary characteristic.}) $K \backslash G$ with respect to the left action of $K$ on $G$.
It carries a natural right action of $G$ by multiplication, and $X$ has an open $G$-orbit which is $G$-equivariantly isomorphic to $K \backslash G$.
As in the classical theory of flag varieties, we are interested in orbit closures with respect to the actions of a Borel subgroup $B \leq G$ on $K\backslash G$ and $X$.  The $B$-actions are inherited by restricting the $G$-actions.  

For a quiver $Q$ and dimension vector $\bd$, we denote by $\rep_Q(\bd)$ and  $GL(\bd)$ the corresponding representation variety and base change group, respectively (see Section \ref{sect:quivers}).
It is not difficult to see that problems (1), (2), and (3) above for $B$-orbit closures in $X$ (thus also in $K\backslash G$) can be realized as special cases of the same problems for $GL(\bd)$-orbit closures in type $D$ quiver representation varieties.
We conceptually summarize this with the following chain, indicating that solutions to problems (1), (2), (3) for the class of varieties at the target of an arrow govern the corresponding solutions for the class of varieties at the source.
\[
\boxed{\substack{B\text{-orbit closures}\\ \text{in } K\backslash G}} \hookrightarrow 
\boxed{\substack{B\text{-orbit closures}\\ \text{in }\Gr_a(k^n)\times \Gr_b(k^n)}} \hookrightarrow
\boxed{\substack{GL(\bd)\text{-orbit closures}\\ \text{in }\rep_Q(\bd),\ Q \text{ type } D}} 
\]
We give a short but complete explanation in Section \ref{sect:GrassinD} for the reader's convenience.

In this paper, we complete the story by realizing problems (1), (2), and (3) above for arbitrary type $D$ quiver representation varieties as special cases of the same problems for $B$-orbit closures in certain symmetric varieties $K\backslash G$ (Theorem \ref{thm:mainTheorem}).  
\[
\boxed{\substack{B\text{-orbit closures}\\ \text{in } K\backslash G}} \hookrightarrow 
\boxed{\substack{B\text{-orbit closures}\\ \text{in } \Gr_a(k^n)\times \Gr_b(k^n)}} \hookrightarrow
\boxed{\substack{GL(\bd)\text{-orbit closures}\\ \text{in }\rep_Q(\bd),\ Q \text{ type } D}} \hookrightarrow
\boxed{\substack{B'\text{-orbit closures}\\ \text{in } K'\backslash G'}} 
\]
In contrast with the first two links, establishing the third is not at all straightforward.  Our proof relies on explicit embeddings, with nice properties, of type $D$ quiver representation varieties inside symmetric quotient varieties.  
Discovering these embeddings and establishing their properties uniformly for arbitrary orientation was the most significant challenge of this work, resolved by enlarging the quivers under consideration to work outside of Dynkin type.

\subsection{Summary of results and methods}
The main results of this paper can be summarized as follows, retaining the notation of the previous subsection.  References and remarks about equivariant $K$-theory are found in Section \ref{sect:Ktheory}.

\begin{theorem}\label{thm:mainTheorem}
Given a type $D$ quiver $Q$ with dimension vector $\bd$, 
there is an associated $G=GL(a+b)$ and subgroup $K=GL(a)\times GL(b)$ embedded diagonally in $G$, open subvariety $U\subseteq K\backslash G$, and parabolic subgroup $P\subseteq G$, such that the following hold.
\begin{enumerate}[(i)]
\item There is an injective, order preserving map of partially ordered sets
\begin{equation}\label{eq:reptoKG}
\begin{split}
\left\{\begin{tabular}{c} $GL(\bd)$-orbit closures\\ in $\rep_Q(\bd)$\end{tabular} \right\}
& \to
\left\{\begin{tabular}{c} $P$-orbit closures\\ in $K\backslash G$ \end{tabular} \right\}
\end{split}
\end{equation}
which we denote by $\zO \mapsto \zO^\dagger$ below. 
The image of the map is the set of orbit closures which have non-trivial intersection with $U$.
\item There is a smooth affine variety $A$ and an isomorphism $\rep_Q(\bd) \times A \cong U \cap \rep_Q(\bd)^\dagger$ which restricts to an isomorphism
\begin{equation}\label{eq:mainthmiso}
\zO \times A \cong U\cap \zO^\dagger
\end{equation}
for each $\zO$ as above.
So, any smooth equivalence class of singularity occurring in $\zO$ also occurs in $\zO^\dagger$.
\item There is a homomorphism of equivariant Grothendieck groups 
\[
f\colon K_{T}(K\backslash G){\to} K_{T(\bd)}(\rep_Q(\bd))
\]
such that $[\cO_{\zO}]=f([\cO_{\zO^\dagger}])$, where $T$ is the maximal torus of diagonal matrices in $G$, and $T(\bd)$ is the maximal torus of matrices which are diagonal in each factor of $GL(\bd)$.
\end{enumerate}
\end{theorem}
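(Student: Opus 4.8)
The plan is to make part (ii) the geometric heart of the argument and to deduce parts (i) and (iii) from it formally. The key object to produce is the triple $(G,K,P)$, the open subvariety $U\subseteq K\backslash G$, the $P$-orbit closure $\rep_Q(\bd)^\dagger\subseteq K\backslash G$, and an isomorphism $\rep_Q(\bd)\times A\cong U\cap\rep_Q(\bd)^\dagger$ which is a trivialized fiber bundle over $\rep_Q(\bd)$ with smooth affine fiber $A$, compatibly with the actions of $GL(\bd)$ on the source and of $P$ on the target (the latter acting on $\rep_Q(\bd)$ through a homomorphism $P\to GL(\bd)$ and ``homogeneously'' along the $A$-direction). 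This is the same shape as the matrix Schubert variety picture in the type $A$ story, but with a symmetric variety replacing a flag variety.

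Granting this, part (i) is read off as follows: define $\zO^\dagger$ to be the closure in $K\backslash G$ of the $P$-orbit matching the dense $GL(\bd)$-orbit of $\zO$ under the bundle correspondence, equivalently the closure of the image of $\zO\times A$. Passing to bundles, taking images, and taking closures all preserve inclusions, so $\zO\mapsto\zO^\dagger$ is order preserving; it is injective because $\zO\times A=U\cap\zO^\dagger$ recovers $\zO$; and the image is exactly the $P$-orbit closures meeting $U$ by the choice of $U$. The singularity assertion is then immediate since $\zO\times A\cong U\cap\zO^\dagger$ is open in $\zO^\dagger$ and $A$ is smooth. For part (iii), the homomorphism $f$ is the composite of the natural maps on equivariant $K$-theory along the chain $K\backslash G\hookleftarrow\rep_Q(\bd)^\dagger\hookleftarrow U\cap\rep_Q(\bd)^\dagger\cong\rep_Q(\bd)\times A$: derived restriction to the closed subvariety $\rep_Q(\bd)^\dagger$, restriction to the open subvariety $U\cap\rep_Q(\bd)^\dagger$, homotopy invariance collapsing the affine factor $A$, and restriction along the inclusion of tori $T(\bd)\hookrightarrow T$ supplied by the construction. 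Tracking $[\cO_{\zO^\dagger}]$ through each step gives $[\cO_\zO]$, once one checks that the relevant preimages are reduced of the expected codimension, so that no higher Tor terms survive.

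The construction of (ii) proceeds in two stages. First, to eliminate the dependence on the orientation of $Q$, enlarge $Q$ to a quiver $\hat Q$ — necessarily outside Dynkin type — carrying a dimension vector $\hat\bd$ restricting to $\bd$, such that $\rep_Q(\bd)$ is a $GL(\bd)$-stable affine-space slice of $\rep_{\hat Q}(\hat\bd)$ faithfully recording its orbit combinatorics, and such that the ambient configuration of $\hat Q$ does not depend on how the arrows of $Q$ are oriented; concretely, each arrow of $Q$ is absorbed into a fixed local gadget of $\hat Q$ that carries the same rank data in either direction. Second, working inside $\rep_{\hat Q}(\hat\bd)$, write the embedding explicitly in matrix coordinates adapted to the decomposition $k^n=k^a\oplus k^b$ and to a flag stabilized by $P$: a representation is sent to a block matrix assembled from its arrow maps, auxiliary identity blocks, and free entries in the unipotent directions of $P$; one then verifies that this is a locally closed embedding whose image is open in the closure of the $P$-orbit of a generic representation, and that the bundle trivializes in these coordinates, yielding $U\cap\rep_Q(\bd)^\dagger\cong\rep_Q(\bd)\times A$ with $A$ smooth affine (indeed an affine space).

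The \emph{main obstacle} is exactly this two-stage construction and its verification, carried out uniformly in the orientation. The genuinely new difficulty — the one the authors flag — is (a) finding the correct non-Dynkin enlargement and proving the slice identification; after that one must (b) show the explicit block matrices really do fill out an open subset of a bona fide $P$-orbit closure, which amounts to matching the classification of $GL(\bd)$-orbits in type $D$ representation varieties with that of $P$-orbits on $K\backslash G$ (clans, with their Bruhat-type order) and checking that orbit closures map to orbit closures; and (c) confirming that distinct orbit closures stay distinct and that all containments are already detected on $U$, which is what upgrades the map in (i) from merely order preserving to an order embedding onto its image.
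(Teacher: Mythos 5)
Your outline reproduces the paper's architecture in broad strokes (orientation-independent enlargement of $Q$ outside Dynkin type, an explicit block-matrix embedding toward $K\backslash G$, a bundle isomorphism over an open set, formal deduction of (i) and (iii)), but the two steps you defer to ``one then verifies'' are exactly where the content lies, and one of them is not what can actually be verified. The image of the explicit embedding $\eta$ of $\rep_{Q^*(n)}(\bd^*)$ is a closed subvariety of a small linear slice $S\subset K\backslash G$; it is never ``open in the closure of the $P$-orbit of a generic representation,'' and no choice of coordinates makes it so. What produces the isomorphism $U\cap\zO^\dagger\cong\zO\times A$ in the paper is a Kazhdan--Lusztig-type slice theorem (Theorem \ref{thm:intersectS}): an explicit open immersion $\Psi\colon S\times Z\to K\backslash G$ with $Z$ smooth, built from an open immersion of a product of subgroups (proved \'etale and injective by direct matrix computation) and a factorization of $B'$, with the property that $\overline{O}\cap\im\Psi\simeq(\overline{O}\cap S)\times Z$ for every $P$-orbit closure. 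Equally essential, and present in your plan only as an acknowledged ``obstacle'' with no method attached, is the dictionary matching quiver loci with $P$-orbit closures: the paper needs the rank-function characterization of $P$-orbit closure containment in $K\backslash G$ (Theorem \ref{thm:Porbitranks}, from Wyser/Bongartz), the partition of those rank functions restricted to $S$ into constant, image, and quiver types (Section \ref{sect:Smirnov}, Proposition \ref{prop:eta1Orbit}), and an Auslander-type argument (Proposition \ref{prop:quiverranksorbit}) showing that the surviving quiver rank functions separate $GL(\bd)$-orbits in type $D$. Without these ingredients, injectivity in (i) and the key identification $\zO^\dagger\cap S^\circ=\eta(\zO\cdot GL(\bd^*))$ in (ii) are unproved assertions, so the proposal has a genuine gap rather than a compressible routine verification.

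Part (iii) as you set it up also has a defect: the first arrow, ``derived restriction to the closed subvariety $\rep_Q(\bd)^\dagger$,'' is not a well-defined homomorphism on equivariant Grothendieck groups of coherent sheaves, because the orbit closure is singular and not regularly embedded, so finite Tor-dimension is unavailable. The paper's chain avoids this: restrict to the open set $\im\Psi$, transport across the $T_S$-equivariant isomorphism of Theorem \ref{thm:intersectS}, restrict along the regular embedding $S\into S\times Z$ --- where vanishing of higher Tor sheaves is supplied by Cohen--Macaulayness of $\zO^\dagger$ via Brion's theorem, not by a check internal to the construction --- then use the equivariant vector bundle $S\to\rep_{Q^*}(\bd^*)$ with section $\eta$ and the induction isomorphism for the homogeneous fiber bundle of Lemma \ref{lem:QtoQn}. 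Two smaller inaccuracies point the same way: $A$ is smooth affine but not an affine space (it contains a factor $GL(\bd)\backslash GL(\bd^*)$), and $\rep_Q(\bd)$ relates to the enlarged quiver not as an ``affine-space slice'' of $\rep_{Q^*}(\bd^*)$ but through the open subvariety $X_Q\simeq\rep_Q(\bd)*_{GL(\bd)}GL(\bd^*)$, whose trivialization must be produced by a splitting as in Lemma \ref{lem:KXHG} rather than read off from coordinates.
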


The following result of Bobi\'nski and Zwara becomes an immediate corollary of our Theorem \ref{thm:mainTheorem} (ii) and general results about spherical varieties (see \cite[Theorem~0.1]{Brion03} and \cite[Theorem 4.4.7]{Perrin14}); in fact, their statement about rational singularities in characteristic zero upgrades to a rational resolution in arbitrary characteristic using \cite[Thm.~1]{AP16}. 

\begin{corollary}[cf. Bobi\'nski--Zwara \cite{BZ02}]\label{cor:BZ}
Orbit closures in type $D$ quiver representation varieties are normal, Cohen-Macaulay, and have a rational resolution (in arbitrary characteristic).
\end{corollary}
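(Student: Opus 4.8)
To prove Corollary~\ref{cor:BZ}, the plan is to combine Theorem~\ref{thm:mainTheorem}(ii) with the standard geometry of $B$-orbit closures in spherical varieties. First I would record that the symmetric variety $K\backslash G$ is spherical: since $K=GL(a)\times GL(b)$ is a symmetric subgroup of $G=GL(a+b)$, a Borel $B\leq G$ acts on $K\backslash G$ with finitely many orbits, each having irreducible closure. The parabolic $P$ of Theorem~\ref{thm:mainTheorem} contains such a $B$, so every $P$-orbit closure $\zO^\dagger\subseteq K\backslash G$ in the image of~\eqref{eq:reptoKG} is $B$-stable, closed, and irreducible (being the closure of the single irreducible $P$-orbit inside it), hence coincides with the closure of a dense $B$-orbit. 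Therefore the general results on $B$-orbit closures in spherical varieties apply to each such $\zO^\dagger$.

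Invoking those results, $\zO^\dagger$ is normal and Cohen--Macaulay by \cite[Theorem~0.1]{Brion03} (equivalently \cite[Theorem~4.4.7]{Perrin14}), and by \cite[Thm.~1]{AP16} it admits a rational resolution valid in arbitrary characteristic, specializing to rational singularities when $\operatorname{char}k=0$. Restricting to the open subvariety $U\subseteq K\backslash G$, the open subset $U\cap\zO^\dagger$ inherits all three properties, since normality and Cohen--Macaulayness are local and having a rational resolution is stable under passage to open subsets. Theorem~\ref{thm:mainTheorem}(ii) now supplies a smooth affine variety $A$ together with an isomorphism $\zO\times A\cong U\cap\zO^\dagger$, so $\zO\times A$ is normal, Cohen--Macaulay, and has a rational resolution.

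It remains to descend these properties along the smooth surjection $q\colon\zO\times A\to\zO$, whose fibres are copies of the regular variety $A$. Normality and Cohen--Macaulayness descend along faithfully flat morphisms with regular fibres, so $\zO$ is normal and Cohen--Macaulay. For the rational resolution, I would take any resolution of singularities $\pi\colon\widetilde{\zO}\to\zO$ (these exist in all characteristics for orbit closures in quiver representation varieties via explicit iterated Grassmann-bundle constructions), observe that $\pi\times\operatorname{id}_A$ is a resolution of $\zO\times A$, and note that since $\zO\times A$ already has rational singularities this resolution is automatically rational; a Künneth/flat-base-change computation of $R^\bullet(\pi\times\operatorname{id}_A)_*\cO$ and $R^\bullet(\pi\times\operatorname{id}_A)_*\omega$ along the faithfully flat map $q$ then forces the corresponding higher-direct-image vanishings for $\pi$ itself, so $\pi$ is a rational resolution of $\zO$.

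The main obstacle is exactly this last descent across the smooth factor $A$: one must check carefully that the arbitrary-characteristic notion of rational resolution of \cite{AP16} is preserved both when passing to the open piece $U\cap\zO^\dagger$ and when dividing out the smooth affine factor, and that in positive characteristic a resolution of $\zO$ is available to feed into the argument. Everything else --- sphericity of $K\backslash G$, the identification of $P$-orbit closures with $B$-orbit closures, and the citations of \cite{Brion03,Perrin14,AP16} --- is routine; indeed the whole conclusion can alternatively be read off from the statement in Theorem~\ref{thm:mainTheorem}(ii) that $\zO$ and $\zO^\dagger$ share a smooth equivalence class of singularities, together with the fact that normality, Cohen--Macaulayness, and rational singularities are invariants of such a class.
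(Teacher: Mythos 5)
Your route is the paper's route: Corollary \ref{cor:BZ} is obtained there exactly as you propose, as an immediate consequence of Theorem \ref{thm:mainTheorem}(ii) combined with \cite[Thm.~0.1]{Brion03} (equivalently \cite[Thm.~4.4.7]{Perrin14}) for normality and Cohen--Macaulayness and \cite[Thm.~1]{AP16} for the upgrade to a rational resolution in arbitrary characteristic. The descent details you supply along $\zO\times A\to\zO$ go beyond what the paper records (it treats them as immediate); the only step there to handle with care --- which you yourself flag --- is the claim that the product resolution of $\zO\times A$ is ``automatically rational'': this is standard in characteristic zero, but in positive characteristic rationality of a resolution is not known to be independent of the chosen resolution, so that point should be argued or cited rather than asserted.
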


Establishing a purely combinatorial model for $GL(\bd)$-orbit closure containment, also called \emph{degeneration order}, is open for type $D$ quivers. 
On the other hand, the poset of $B$-orbit closures in $K\backslash G$ has been completely described by Wyser in \cite{Wyser16} in terms of combinatorial objects called $(a,b)$-\emph{clans}\footnote{Wyser describes the Bruhat order for $K$-orbit closures in $G/B$, but this poset is isomorphic to the poset of $B$-orbit closures in $K\backslash G$ (both of which are isomorphic to the poset of $K\times B$-orbit closures in $G$).}; these are particular strings of length $a+b$ in elements of $\mathbb{N}\cup\{+, -\}$ which parametrize $B$-orbit closures \cite[Thm.~4.1]{ToshihikoToshio} (see also \cite[Thm.~2.2.8]{Yamamoto}). Theorem \ref{thm:mainTheorem} thus immediately implies:
 
 \begin{corollary}\label{cor:clans}
Retaining the notation of Theorem \ref{thm:mainTheorem}, the poset of orbit closures in a type $D$ quiver representation variety $\rep_Q(\bd)$, partially ordered by inclusion, is isomorphic to a subposet of the poset of $(a,b)$-clans. The elements in the subposet are exactly the $(a,b)$-clans that index $B$-orbit closures which are maximal in their $P$-orbit closure, and have non-trivial intersection with $U$.  
\end{corollary}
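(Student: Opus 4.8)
The plan is to obtain Corollary~\ref{cor:clans} as a formal consequence of Theorem~\ref{thm:mainTheorem}, the standard identification of $B$-orbit closures in $K\backslash G$ with $(a,b)$-clans, and a basic fact about spherical varieties. First I would promote the map $\zO\mapsto\zO^\dagger$ of Theorem~\ref{thm:mainTheorem}(i) from an order-preserving injection to an isomorphism of posets onto its image; injectivity and order-preservation are already given, so what is left is order-reflection. This I would extract from part~(ii): the chosen isomorphism $\rep_Q(\bd)\times A\cong U\cap\rep_Q(\bd)^\dagger$ sends $\zO_i\times A$ onto $U\cap\zO_i^\dagger$, so a containment $\zO_1^\dagger\subseteq\zO_2^\dagger$ yields $U\cap\zO_1^\dagger\subseteq U\cap\zO_2^\dagger$, hence $\zO_1\times A\subseteq\zO_2\times A$, hence $\zO_1\subseteq\zO_2$ since $A\neq\emptyset$. (For good measure one also sees the reverse implication: each $\zO_i^\dagger$ is irreducible and meets $U$, so $U\cap\zO_i^\dagger$ is dense in $\zO_i^\dagger$; if $\zO_1\subseteq\zO_2$ then $U\cap\zO_1^\dagger\subseteq\zO_2^\dagger$, and taking closures gives $\zO_1^\dagger\subseteq\zO_2^\dagger$.) Thus $\zO\mapsto\zO^\dagger$ identifies the poset of $GL(\bd)$-orbit closures in $\rep_Q(\bd)$ with the induced subposet $\cS$ of the poset of $P$-orbit closures in $K\backslash G$ consisting of those closures which meet $U$.

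Next I would observe that the poset of $P$-orbit closures in $K\backslash G$ sits as an induced subposet of the poset of $B$-orbit closures in $K\backslash G$. Since $B\subseteq P$ and $K\backslash G$ is a spherical $G$-variety, both groups act with finitely many orbits; a $P$-orbit is irreducible (parabolic subgroups of $GL(n)$ are connected), hence contains a dense $B$-orbit, so the closure of any $P$-orbit equals the closure of a single $B$-orbit. As all the posets in sight are ordered by inclusion of subvarieties, this exhibits the $P$-orbit closures as a subposet of the $B$-orbit closures, and in particular exhibits $\cS$ as a subposet of the latter.

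Finally I would invoke the combinatorial model: by Wyser \cite{Wyser16}, together with \cite[Thm.~4.1]{ToshihikoToshio} and \cite[Thm.~2.2.8]{Yamamoto} and the poset identifications recalled in the introduction, the $(a,b)$-clans index the $B$-orbit closures in $K\backslash G$ so that the Bruhat order on clans matches inclusion of closures, giving a poset isomorphism. Composing the three steps identifies the poset of $GL(\bd)$-orbit closures in $\rep_Q(\bd)$ with a subposet of the poset of $(a,b)$-clans, and chasing the identifications shows this subposet is precisely the set of clans indexing $P$-orbit closures in $K\backslash G$ with non-trivial intersection with $U$, as claimed. Since every step after Theorem~\ref{thm:mainTheorem} is formal, there is no genuine obstacle; the only point requiring an argument rather than a citation is the density observation in the first step, which is what turns the order-preserving bijection of Theorem~\ref{thm:mainTheorem}(i) into an isomorphism of posets onto its image.
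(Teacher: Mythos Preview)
Your proposal is correct and matches the paper's intended argument; the paper itself gives no separate proof, merely stating that ``Theorem~\ref{thm:mainTheorem} thus immediately implies'' the corollary. You have carefully filled in the one genuine detail the paper glosses over: Theorem~\ref{thm:mainTheorem}(i) is only stated as an injective order-preserving map, and you correctly extract order-reflection from the uniform isomorphism in part~(ii), then pass from $P$-orbit closures to $B$-orbit closures via the dense $B$-orbit in each connected $P$-orbit, and finally invoke Wyser's clan indexing.
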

 
The poset of $B$-orbit closures in a double Grassmannian can be embedded in a poset of clans, by Corollary \ref{cor:clans} together with Theorem \ref{thm:GKinD} (i).
See Smirnov's works \cite{Smirnov08, SmirnovThesis} or his survey \cite[Section 4]{SmirnovSurvey} for existing results on orders of $B$-orbit closures in double Grassmannians.
We plan to further explore the combinatorics of orbit closure containment in both type $D$ representation varieties and double Grassmannians in future work. 

We now outline our proof method for Theorem \ref{thm:mainTheorem}. 
We begin in Section \ref{sect:prelim} by using an arrow contraction operation to show that the conclusion of Theorem \ref{thm:mainTheorem} is true for a given type $D$ representation variety $\rep_Q(\bd)$ if and only if it is true upon replacing 
$\rep_Q(\bd)$ by a particular open subvariety $X_Q$ of a representation variety $\rep_{Q^*(n)}(\bd^*)$ where $Q^*(n)$ is the following bipartite quiver (not of type $D$) 
\begin{equation}\label{eq:Dnquiver}
Q^*(n) :=\quad
\vcenter{\hbox{\begin{tikzpicture}[point/.style={shape=circle,fill=black,scale=.5pt,outer sep=3pt},>=latex]
   \node[outer sep=-2pt] (x_0) at (-2,0) {${x_0}$};
   \node[outer sep=-2pt] (x) at (-1,-0.6) {${y'_0}$};
  \node[outer sep=-2pt] (y) at (-1,0.6) {${y_0}$};
   \node[outer sep=-2pt] (1) at (0,0) {${x_1}$};
  \node[outer sep=-2pt] (2) at (1.5,0) {${y_1}$};
   \node[outer sep=-2pt] (3) at (3,0) {${x_2}$};
   \node[outer sep=-2pt] (d) at (3.75,0) {${\cdots}$};
   \node[outer sep=-2pt] (4) at (4.5,0) {${x_n}$};
  \node[outer sep=-2pt] (5) at (6,0) {${y_n}$};
  \path[->]
        (y) edge node[above] {${\beta_0}$} (x_0) 
	(y) edge node[auto] {${\za_1}$} (1)
  	(x) edge node[below] {${\za'_1}$} (1) 
  	(2) edge node[auto] {${\zb_1}$} (1) 
	(2) edge node[below] {${\za_2}$} (3) 
	(5) edge node[auto] {${\zb_n}$} (4); 
	   \end{tikzpicture}}}.
\end{equation}
This is the content of Section \ref{sect:bundles} and Lemma \ref{lem:QtoQn}, taken together. 

In Section \ref{sect:quiverRanks}, we describe a finite collection of $\mathbb{Z}$-valued functions on each representation variety $\rep_{Q^*(n)}(\bd^*)$. We then characterize orbits in associated type $D$ representation varieties in terms of these functions (Proposition \ref{prop:quiverranksorbit}).

In Section \ref{sect:slice}, we consider intersections of certain orbit closures in $K\backslash G$ with a locally closed subvariety $S \subset K \backslash G$. 
The main result of this section (Theorem \ref{thm:intersectS}) says that intersecting one of these orbit closures in $K\backslash G$ with $S$ is, up to a smooth factor, the same as taking an open subset of that orbit closure.  Working in $S$ is a very useful intermediate reduction for comparing orbit closures in $K \backslash G$ with those in $\rep_Q(\bd)$. We compare our result to the literature at the beginning of Section \ref{sect:slice}.

In Section \ref{sect:embedding}, we give an explicit closed embedding $\eta\colon \rep_{Q^*(n)}(\bd^*) \to S$. This embedding is equivariant with respect to an appropriate map of tori.

 In Section \ref{sect:Smirnov}, we use the functions from Section \ref{sect:quiverRanks} to characterize intersections $S\cap O_{\eta(V)}$ and $S\cap \overline{O_{\eta(V)}}$. Here $\eta$ is as above, and $O_{\eta(V)}$ (respectively $\overline{O_{\eta(V)}}$) denotes the $P$-orbit through $\eta(V)$ (respectively its closure in $K\backslash G$) where $P$ is the parabolic subgroup from Theorem \ref{thm:mainTheorem}.
 
We finally prove Theorem \ref{thm:mainTheorem} in Section \ref{sect:mainTheorem}: we relate orbit closures in type $D$ representation varieties to orbit closures in $X_Q$ via the material in Section \ref{sect:prelim}. We then relate orbit closures in $X_Q$ to (open subvarieties of) intersections of the form $S\cap \overline{O_{\eta(V)}}$, $V\in X_Q$, using the map $\eta$ defined in Section \ref{sect:embedding}, together with the functions analyzed in Sections \ref{sect:quiverRanks} and \ref{sect:Smirnov}. Finally, we use our work from Section \ref{sect:slice} to relate each intersection $S\cap \overline{O_{\eta(V)}}$ to an open subvariety of $\overline{O_{\eta(V)}}$ in $K\backslash G$.

We conclude the paper with an application to Frobenius splitting of varieties in Section \ref{sect:Frobenius}.  Here we give an example showing that simultaneous compatible Frobenius splitting of all orbit closures in type $D$ quiver representation varieties is not generally possible, in contrast to the case of type $A$ quivers \cite[Prop.~6.3]{KR}.  This and our main theorem imply that simultaneous compatible Frobenius splitting of all $B$-orbit closures in double Grassmannians is generally not possible either, answering a question raised in \cite[Rmk.~4.5]{AP16}.

\subsection{Relation to existing literature}\label{sect:literature}
We now recall existing connections between representation varieties of Dynkin quivers and (multiple) flag varieties.  

\subsubsection*{Type $A$ quivers} The connections between orbit closures in type $A$ quiver representation varieties and Schubert varieties in type $A$ flag varieties have a long history (not all in the language of quiver representations). 
For example, in the works \cite{MR717615,Zgradednilp,LMdegen,BZ01,BZ02, KMS}, 
solutions to various problems about type $A$ quiver orbit closures were shown to be governed by solutions to the corresponding problems for Schubert varieties. 
Bobi\'nski and Zwara \cite{BZ01, BZ02} completed the story for singularities (Problem (2)) in type $A$ by showing that the 
singularity types appearing in type $A$ quiver orbit closures exactly coincide with singularity types of Schubert varieties in type $A$ flag varieties. 

In \cite{KR}, the authors of the present paper proved an analogue of Theorem \ref{thm:mainTheorem} relating orbit closures in (arbitrary) type $A$ representation varieties to Schubert varieties in type $A$ flag varieties. This work was inspired by previous work of Zelevinsky \cite{Zgradednilp} and Lakshmibai-Magyar \cite{LMdegen} on a specific orientation. The main theorem of \cite{KR} was later used to produce explicit combinatorial formulas for equivariant $K$-classes and cohomology classes of orbit closures in \cite{KKR}. Other works on these classes include \cite{BFchernclass, MR1932326,FRdegenlocithom,BFR,KMS, yongComponent, MR2137947, MR2114821, MR2306279}.

\subsubsection*{Type $D$ quivers}
Bobi\'nski and Zwara \cite{BZ02} showed that every singularity appearing in an orbit closure of a type $D$ quiver representation variety (for arbitrary orientation) also appears in some $B$-orbit closure in a symmetric variety $G/K$, thus obtaining Corollary \ref{cor:BZ} via Brion's results cited above.
Their method involved an innovative use of Auslander-Reiten quivers to construct Hom-controlled functors between categories of representations of various type $D$ quivers, combined with Zwara's work \cite{Zwara02}.  Our work gives a proof which is accessible without specialized knowledge of finite-dimensional algebras.

Combinatorics of orbit closure containment for type $D$ quivers seem to first appear in the work of Abeasis-del Fra \cite{abeasisdelfraDn}.  Their (92 page) paper treats only the special orientation where all the arrows point towards the end of the long branch, presenting a graphic description of the covering relations via a series of tables.  A characterization of orbit closure containment in terms of dimensions of Hom spaces was later given by Bongartz \cite{Bongartz96} for all (extended) Dynkin quivers, and by Zwara \cite{Zwara99} for arbitrary representation-finite algebras.
There is also a cohomological characterization of orbit closure containment due to Feh\'er and Patakfalvi \cite{FP09}, valid for quivers of arbitrary Dynkin type.
In contrast, the connection with clans established in this work allows for an entirely self-contained combinatorial model for the orbit closure poset of arbitrary type $D$ quivers which does not require performing any representation theoretic calculations.  We plan to make this more explicit in an upcoming work.

There are few explicit formulas for equivariant $K$-classes of orbit closures outside of type $A$ quivers.
Buch's work \cite{Buch08} gives some guidance in what kind of formulas to look for, proving for all Dynkin quivers that these classes can be expressed as finite sums of products of stable double Grothendieck polynomials, with integer coefficients.  These coefficients are uniquely determined in his expression, and he furthermore conjectures positivity and alternating sign properties for these coefficients \cite[Conj.~1.1]{Buch08}.   One of the goals of this work is to provide a new tool for attacking this conjecture.

The only explicit formulas that we are aware of in the literature appear in works of Allman and Rim\'anyi, and are valid for arbitrary Dynkin quivers.  Allman's formula in \cite{allman1} calculates equivariant $K$-classes of orbit closures as iterated residues of certain rational functions (see \cite{Rimanyi14} for the cohomological case).
Rim\'anyi produced formulas in \cite{Rimanyi20} for the equivariant motivic Chern classes of orbit closures in Dykin type; this is a deformation from which the equivariant Grothendieck class can be recovered.

\subsection*{Remark on the field $k$}
Everywhere but Section \ref{sect:Frobenius}, we work over an algebraically closed field $k$ of arbitrary characteristic which is often omitted from the notation. Since all schemes we consider are defined over $\ZZ$, many of our results generalize to arbitrary infinite $k$ though.

\subsection*{Acknowledgements}  We thank Michel Brion for pointing out that the image of our embedding $\eta$ was in $K \backslash G$; this observation led us to consider the interplay between type $D$ quiver representation varieties, double Grassmannians, and $K\backslash G$, rather than just between the first two.  We also thank Allen Knutson for helpful discussions early in the project, Alex Woo and Alex Yong for pointing us to the works \cite{WooWyserYong} and \cite{KnutsonWooYong}, and Richard Rim\'anyi for his comments.
Finally, we thank an anonymous referee for a very careful reading and helpful comments.

\section{Preliminaries}\label{sect:prelim}
\subsection{Quivers and representation varieties}\label{sect:quivers}
Fix a quiver $Q$ with vertex set $Q_0$ and arrow set $Q_1$, and let $ta$ and $ha$ denote the \emph{tail} and \emph{head} of an arrow $ta \xrightarrow{a} ha$.
We say that a quiver $Q'$ is a \emph{subquiver} of $Q$ if $Q_0'\subseteq Q_0$, $Q_1'\subseteq Q_1$, and the tail and head functions for $Q'$ are restrictions of the tail and head functions for $Q$.
We use the term \emph{tree quiver} to refer to a connected quiver with $n$ vertices and $n-1$ arrows.

Given a \emph{dimension vector} $\bd\colon Q_0 \to \mathbb{Z}_{\geq 0}$, we have the \emph{representation space}
\begin{equation}
\mathtt{rep}_Q(\bd) := \prod_{a \in Q_1} \Mat(\bd(ta), \bd(ha)),
\end{equation}
where $\Mat(m, n)$ denotes the algebraic variety of matrices with $m$ rows, $n$ columns, and entries in $k$. Each $V=(V_a)_{a \in Q_1}$ in $\mathtt{rep}_Q(\bd)$ is a \emph{representation} of $Q$; each matrix $V_a$ maps row vectors in $k^{\bd(ta)}$ to row vectors in $k^{\bd(ha)}$ by right multiplication.  
We denote the total dimension of the dimension vector $\bd$ by $d = \sum_{z \in Q_0} \bd(z)$.  
There is a \emph{base change group} $GL(\bd) := \prod_{z \in Q_0} GL(\bd(z))$,
which acts on $\mathtt{rep}_Q(\bd)$. 
Here $GL(\bd(z))$ denotes the general linear group of invertible $\bd(z)\times \bd(z)$ matrices with entries in $k$.
Explicitly, if $g = (g_z)_{z\in Q_0}$ is an element of $GL(\bd)$, and $V = (V_a)_{a\in Q_1}$ is an element of $\mathtt{rep}_Q(\bd)$, then the (right) action of $GL(\bd)$ on $\mathtt{rep}_Q(\bd)$ is given by
$V\cdot g = (g_{ta}^{-1}V_a g_{ha})_{a\in Q_1}.$ 
We denote by $T(\bd) \leq GL(\bd)$ the maximal torus of matrices which are diagonal in each factor.
The closure of a $GL(\bd)$-orbit in $\mathtt{rep}_Q(\bd)$ is called a \emph{quiver locus}.  
We use the notation $\zO$ to denote a quiver locus in $\rep_Q(\bd)$, and $\zO^\circ$ the dense $GL(\bd)$-orbit of $\zO$. 
An introduction to the theory of quiver representations can be found in the texts \cite{assemetal,Schiffler:2014aa,DWbook}.

\subsection{Equivariant $K$-theory}\label{sect:Ktheory}
For equivariant $K$-theory, we follow the notation and terminology of \cite[\S\S3,\,4]{Buch08}.  More detailed references for the basic notions are \cite{Thomason88} , or \cite[\S5]{CGbook} over $\CC$.
If we have an algebraic group $G$ and an algebraic $G$-scheme $X$ (both over $k$), then we denote by $K_G(X)$ the Grothendieck group of the category of $G$-equivariant coherent sheaves. If $Y$ is a $G$-stable closed subscheme of $X$, we let $[\mathcal{O}_Y]\in K_G(X)$ denote the class of its structure sheaf.

When $G$ is reductive (e.g., $G=GL(\bd)$) and $T \subseteq G$ a maximal torus, restriction of the group action induces a monomorphism $K_G(X) \into K_T(X)$ \cite[Thm.~1.13]{Thomason88}.  With this in mind, we can study a class $[\mathcal{O}_Y]\in K_G(X)$ just as well via its image in $K_T(X)$.  In certain instances it is easier to consider $T$-actions than $G$-actions, for example in Theorem \ref{thm:intersectS}.

The following lemma is a geometric restatement of the standard duality between representations of $Q$ and its \emph{opposite quiver} $Q^{\text{op}}$ (the quiver obtained from $Q$ by reversing the direction of each arrow). It shows that it is essentially the same to understand the equivariant geometry of representation varieties for a quiver or its opposite quiver.
\begin{lemma}\label{lem:Qop}
Let $Q$ be any quiver and $\bd$ a dimension vector for $Q$.  Then the transpose map $\rep_Q(\bd) \to \rep_{Q^{\textup{op}}}(\bd)$ given by $V = (V_a)\mapsto V^T = (V_a^T)$ is an isomorphism which is equivariant with respect to the automorphism of $GL(\bd)$ given by $(g_z) \mapsto ((g_z^{T})^{-1})$. 
In particular, the transpose map induces an isomorphism of $GL(\bd)$-equivariant Grothendieck groups $K_{GL(\bd)}(\rep_{Q^{\textup{op}}}(\bd)) \xto{\sim} K_{GL(\bd)}(\rep_Q(\bd))$.
\end{lemma}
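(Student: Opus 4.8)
The plan is to verify directly that the transpose map is a well-defined isomorphism of varieties, then check the stated equivariance, and finally deduce the isomorphism of equivariant Grothendieck groups formally.

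First I would observe that for each arrow $a\colon ta \to ha$, the transpose $V_a \mapsto V_a^T$ is an isomorphism of affine spaces $\Mat(\bd(ta),\bd(ha)) \xto{\sim} \Mat(\bd(ha),\bd(ta))$, and that $\Mat(\bd(ha),\bd(ta))$ is precisely the matrix space attached to the arrow $a^{\mathrm{op}}\colon ha \to ta$ of $Q^{\mathrm{op}}$. Taking the product over all arrows gives the desired isomorphism $\rep_Q(\bd) \xto{\sim} \rep_{Q^{\mathrm{op}}}(\bd)$ of algebraic varieties; it is its own inverse up to the identification $(Q^{\mathrm{op}})^{\mathrm{op}} = Q$, since $(V_a^T)^T = V_a$.

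Next I would check equivariance. The map $\varphi\colon GL(\bd) \to GL(\bd)$, $\varphi((g_z)) = ((g_z^T)^{-1})$, is a group automorphism since transpose-inverse is an anti-automorphism of each $GL(\bd(z))$ composed with... more carefully, $g \mapsto (g^T)^{-1}$ is a group \emph{isomorphism} of $GL(m)$ onto itself because $((gh)^T)^{-1} = (h^T g^T)^{-1} = (g^T)^{-1}(h^T)^{-1}$. Now unwind the actions: for $V \in \rep_Q(\bd)$ and $g \in GL(\bd)$, the action on $Q$ sends $V_a$ to $g_{ta}^{-1} V_a g_{ha}$, so after transposing we get $g_{ha}^T V_a^T (g_{ta}^{-1})^T = g_{ha}^T V_a^T (g_{ta}^T)^{-1}$. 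On the other hand, the action of $\varphi(g)$ on $\rep_{Q^{\mathrm{op}}}(\bd)$ applied to the transpose representation: the arrow $a^{\mathrm{op}}$ has tail $ha$ and head $ta$, so $(V_a^T)$ is sent to $\varphi(g)_{ha}^{-1} V_a^T \varphi(g)_{ta} = ((g_{ha}^T)^{-1})^{-1} V_a^T (g_{ta}^T)^{-1} = g_{ha}^T V_a^T (g_{ta}^T)^{-1}$, which matches. So $(V \cdot g)^T = V^T \cdot \varphi(g)$, i.e. the transpose map is equivariant along $\varphi$.

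Finally, for the Grothendieck groups: given an isomorphism of varieties $f\colon X \xto{\sim} Y$ that is equivariant with respect to a group automorphism $\varphi\colon H \xto{\sim} H$ (meaning $f(x\cdot h) = f(x)\cdot\varphi(h)$), pullback of equivariant coherent sheaves along $f$, combined with the restriction-of-scalars equivalence along $\varphi$ on equivariant structures, gives an exact equivalence of the categories of $H$-equivariant coherent sheaves on $Y$ and on $X$; hence an isomorphism $K_H(Y) \xto{\sim} K_H(X)$. Applying this with $X = \rep_Q(\bd)$, $Y = \rep_{Q^{\mathrm{op}}}(\bd)$, $H = GL(\bd)$ yields $K_{GL(\bd)}(\rep_{Q^{\mathrm{op}}}(\bd)) \xto{\sim} K_{GL(\bd)}(\rep_Q(\bd))$. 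I do not anticipate a serious obstacle here; the only point requiring a little care is getting the head/tail bookkeeping right in the equivariance computation (which index goes with $g^T$ versus $(g^T)^{-1}$ after transposing), and correctly matching the definition of the $GL(\bd)$-action on $\rep_{Q^{\mathrm{op}}}(\bd)$ with the reversed roles of head and tail. The passage to Grothendieck groups is a formal consequence of having an equivalence of the relevant abelian categories.
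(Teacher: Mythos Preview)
Your proof is correct. The paper does not actually prove this lemma; it simply states it as ``a geometric restatement of the standard duality'' between $Q$ and $Q^{\mathrm{op}}$, leaving the verification implicit. Your direct check of the equivariance (including the head/tail bookkeeping) and the formal passage to equivariant Grothendieck groups via an equivalence of equivariant sheaf categories is exactly the argument one would write down to justify the claim.
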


\subsection{Homogeneous fiber bundles}\label{sect:bundles}
Let $G$ be an algebraic group, $H\leq G$ a closed and connected algebraic subgroup, and $X$ a quasiprojective $H$-variety.
Write $X *_H G$ for the quotient of $X \times G$ by the free right action of $H$ given by $(x,g) \cdot h = (x\cdot h, h^{-1} g)$. 
This quotient, called an \emph{induced space} or \emph{homogeneous fiber bundle}, is a $G$-variety for the action $(x*g') \cdot g = x*g'g$.
One may consult \cite[\S5.2.18]{CGbook} or \cite[\S14.6]{FSR17} as general references, for example.
The following lemma gives a criterion for a homogeneous fiber bundle to decompose as a product of varieties.

\begin{lemma}\label{lem:KXHG}\cite[Lemma 14.6.9]{FSR17}
Let $G$ be an affine algebraic group and $H\leq G$ a closed subgroup. Let $X$ be a $G$-variety (where the $G$-action extends the $H$-action) and endow $X\times H\backslash G$ with the diagonal right $G$-action. Then there exists a $G$-equivariant isomorphism $\Gamma: X *_H G\rightarrow  X\times H\backslash G$. 
\end{lemma}

The $X$, $H$, $G$ in this paper always satisfy the hypotheses of \cite[Thm.~14.6.5]{FSR17}, so that $X \times G \to X*_H G$ is a geometric quotient, and the map $X \to X *_H G$ sending $x \mapsto x*1$ is a closed immersion.  In this way, we identify $X \subseteq X*_H G$ when taking intersections of $X$ with subvarieties of $X*_H G$.
The following results connect the problems we study for the $H$-variety $X$ to those for the $G$ variety $X *_H G$.

\begin{proposition}\label{prop:fiberbundle}
In the general setup of this section, all of the following hold.
\begin{enumerate}[(i)]
\item The following association is a bijection.  
\begin{equation}\label{eq:GHvarieties}
\begin{split}
\left\{\begin{tabular}{c} $G$-stable subvarieties\\ of $X *_H G$\end{tabular} \right\}
& \to
\left\{\begin{tabular}{c}  $H$-stable subvarieties\\ of $X$ \end{tabular} \right\}
\\
Y \qquad \qquad &\mapsto \qquad \qquad Y \cap X\\
Z \cdot G \qquad \qquad &\mapsfrom \qquad \qquad Z
\end{split}
\end{equation}
In particular, it restricts to a bijection on orbits and induces an isomorphism of orbit closure posets.
\item Let $Z \subseteq X$ be an $H$-stable subvariety. Any smooth equivalence class of singularity that occurs in $Z$ also occurs in $Z\cdot G$. 
\item Restriction along $X \into X *_H G$ induces an isomorphism of equivariant Grothendieck groups $K_G(X *_H G) \simeq K_H(X)$.
\end{enumerate}
\end{proposition}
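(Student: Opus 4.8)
The plan is to establish all three parts using the basic structure theory of the bundle $\pi\colon X*_H G \to H\backslash G$, whose fibers are all isomorphic to $X$, together with the fact (recalled just before the statement) that $x \mapsto x*1$ is a closed immersion $X \into X*_H G$ with image $\pi^{-1}(H\cdot 1)$. For (i), I would first observe that a subvariety $Y \subseteq X*_H G$ is $G$-stable if and only if it is of the form $Z \cdot G$ for some $H$-stable $Z \subseteq X$: given $G$-stable $Y$, set $Z := Y \cap X$, which is $H$-stable since $H$ stabilizes $X$ inside $X*_H G$; conversely, $Z \cdot G$ is $G$-stable by construction. The two assignments are mutually inverse because $G$ acts transitively on the base $H\backslash G$, so $(Z\cdot G)\cap X = Z$ (the fiber over $H\cdot 1$ recovers exactly $Z$), and $(Y\cap X)\cdot G = Y$ (every point of $Y$ can be translated into the fiber over $H\cdot 1$). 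Closedness and irreducibility are preserved in both directions since $\pi$ is a locally trivial fibration (in the étale or Zariski topology, per \cite[Thm.~14.6.5]{FSR17}) and $G \to H\backslash G$ is surjective and open. This bijection is clearly inclusion-preserving in both directions, so it induces an isomorphism of posets; restricting to orbits, an $H$-orbit $O \subseteq X$ corresponds to $O\cdot G$, which is a single $G$-orbit, and conversely.

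For (ii), the key point is that $\pi$ is a fiber bundle with fiber $X$, so it is smooth (flat with smooth fibers) and, locally on the base, $X*_H G$ is isomorphic to $X \times (\text{open in } H\backslash G)$, i.e.\ to $X \times (\text{smooth variety})$. Under such a local isomorphism $Z\cdot G$ corresponds to $Z \times (\text{smooth variety})$, since $Z\cdot G$ meets each fiber in a translate of $Z$. Therefore at any point $z \in Z \subseteq Z\cdot G$, an étale-local neighborhood of $(Z\cdot G, z)$ is isomorphic to an étale-local neighborhood of $(Z \times \bA^m, (z,0))$ for appropriate $m$, which exhibits the same smooth equivalence class of singularity as $(Z, z)$. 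Hence every singularity type appearing in $Z$ appears in $Z\cdot G$.

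For (iii), the standard statement is that for a homogeneous fiber bundle the pullback along $X \into X*_H G$ (equivalently, the functor $\cF \mapsto \cF|_X$ on equivariant coherent sheaves, or its inverse, the induction functor $\cG \mapsto \cG *_H G$) is an equivalence of categories of equivariant coherent sheaves, hence induces an isomorphism on Grothendieck groups; see \cite[\S5.2.18]{CGbook}. Concretely, the induction functor sends an $H$-equivariant sheaf $\cG$ on $X$ to the $G$-equivariant sheaf on $X*_H G$ whose pullback to $X\times G$ is $p_X^*\cG$; this is quasi-inverse to restriction along $x\mapsto x*1$. Both functors are exact (restriction to the fiber $\pi^{-1}(H\cdot 1)$ is exact because that fiber is a connected component / the map is flat, and induction is exact because forming the quotient by the free $H$-action is exact on equivariant sheaves), so they descend to mutually inverse homomorphisms of $K$-groups. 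I expect part (iii) to be the main obstacle to write cleanly: it requires being careful about which category of equivariant sheaves one uses and citing the descent of equivariant sheaves along the torsor $X\times G \to X*_H G$; parts (i) and (ii) are essentially formal consequences of the fiber-bundle structure and the transitivity of the $G$-action on the base.
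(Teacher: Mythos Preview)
Your proposal is correct and, for parts (i) and (iii), essentially identical to the paper's proof: the paper just cites \cite[Lem.~14.6.7]{FSR17} for the two identities $(Y\cap X)\cdot G = Y$ and $(Z\cdot G)\cap X = Z$, and cites the induction isomorphism \cite[(5.2.17)]{CGbook} for (iii) without further comment.

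For part (ii) your argument via local triviality of the bundle $\pi\colon X*_H G \to H\backslash G$ is valid, but the paper takes a more direct route that avoids any appeal to trivializations.  It observes that $Z\cdot G = Z *_H G$, then exhibits a common smooth roof in the sense of Hesselink's definition of smooth equivalence: take $T = Z \times G$ with $f_1\colon T \to Z$ the first projection (smooth because $G$ is smooth) and $f_2\colon T \to Z*_H G$ the quotient map (smooth because it is a geometric quotient by a free action of the smooth group $H$, hence flat with fibers isomorphic to $H$).  Then $t=(z,1)$ maps to $z$ and $z*1$ respectively.  Your approach requires checking that a local trivialization of $X*_H G$ carries $Z\cdot G$ to $Z \times U$ (true, using $H$-stability of $Z$, but you left this implicit); the paper's roof construction sidesteps this entirely and works uniformly without choosing sections.
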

\begin{proof}
(i) To see that the maps in \eqref{eq:GHvarieties} are mutually inverse bijections, one can directly check that $(Y \cap X)\cdot G = Y$ and $(Z \cdot G) \cap X = Z$ (see \cite[Lem.~14.6.7]{FSR17}).\\ 
(ii) Since $Z$ is $H$-stable, we have $Z \cdot G = Z *_H G$. 
Given $z \in Z$ we will show that the point $z*1 \in Z\cdot G$ is in the same smooth equivalence class of singularities,  following the definition from \cite[Section 1.7]{Hesselink76}.  
That is, we will show that there is a scheme $T$, a point $t\in T$, and smooth $k$-morphisms $f_1: T\rightarrow Z$ and $f_2: T\rightarrow Z\cdot G$ such that $f_1(t) = z$ and $f_2(t) = z*1$. To this end, let $T = Z\times G$ and $t = (z,1)$.  Let $f_1: Z\times G\rightarrow Z$ denote the projection map onto the first factor and let $f_2: Z \times G \to Z*_H G$ be the quotient map. Then $f_1(z,1) = z$ and $f_2(z,1) =z*1$.
The projection map $f_1$ is smooth since $G$ is, so it remains to see that the quotient map $f_2$ is smooth.  This follows from the fact that it is a geometric quotient by the free action of $H$, so each fiber is isomorphic to the smooth variety $H$ and the map is flat \cite[Prop.~0.9]{MFK-GIT}.

(iii) This is \cite[Thm.~1.10]{Thomason88}.
\end{proof}  
  
\subsection{Double Grassmannians as special cases of type $D$ quiver representation varieties}\label{sect:GrassinD}
We now show how Problems (1), (2), (3) of the introduction, in the case of double Grassmannians, can be seen as special cases of the corresponding problems for type $D$ quiver representation varieties.  While the general principle at use is already known (parts (i) and (ii) are essentially proven in \cite{BZ02}), at least the $K$-theory part does not seem to appear elsewhere in the literature, and so we include a complete proof.

\begin{theorem}\label{thm:GKinD}
Given any double Grassmannian $X=\Gr_a(k^n) \times \Gr_b(k^n)$,
there exists a type $D$ quiver $Q$ with dimension vector $\bd$, and a $GL(\bd)$-stable open subvariety $U \subset \rep_Q(\bd)$ such that the following hold.
\begin{enumerate}[(i)]
\item There is an injective, order preserving map of partially ordered sets
\begin{equation}\label{eq:Xtorep}
\begin{split}
\left\{\begin{tabular}{c} $B$-orbit closures\\ in $X$ \end{tabular} \right\}
& \to
\left\{\begin{tabular}{c} $GL(\bd)$-orbit closures\\ in $\rep_Q(\bd)$\end{tabular} \right\}
\end{split}
\end{equation}
which we denote by $\mathscr{O} \mapsto \mathscr{O}'$ below.
The image of the map is the set of orbit closures which have non-trivial intersection with $U$.
\item Any smooth equivalence class of singularity occurring in $\mathscr{O}$ also occurs in $\mathscr{O}'$.
\item There is a homomorphism of equivariant Grothendieck groups 
\[
f\colon K_{GL(\bd)}(\rep_Q(\bd))\to K_B(X)
\]
such that $[\cO_{\mathscr{O}}]=f([\cO_{\mathscr{O}'}])$.
\end{enumerate}
\end{theorem}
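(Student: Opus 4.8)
The plan is to interpose the triple flag variety $X\times(B\backslash G)$ between the two sides, realizing it simultaneously as a homogeneous fiber bundle over $X$ for the $B$-action and as the base of a principal bundle whose total space is an open subvariety of a type $D$ representation variety; Proposition~\ref{prop:fiberbundle}, Lemma~\ref{lem:KXHG}, and $K$-theoretic descent along the principal bundle then assemble the three statements. Concretely, set $G=GL(n)$, fix a Borel $B\le G$ (so $B\backslash G$ is the complete flag variety of $k^n$), and let $Q$ be the type $D_{n+2}$ quiver with vertices $w_1,\dots,w_{n-1},z,x,y$, arrows $w_1\to\cdots\to w_{n-1}\to z$, $x\to z$, and $y\to z$, and dimension vector $\bd$ with $\bd(w_i)=i$, $\bd(z)=n$, $\bd(x)=a$, $\bd(y)=b$: the two short arms record the two Grassmannian factors and the long arm resolves the Borel. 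Let $U\subseteq\rep_Q(\bd)$ be the open $GL(\bd)$-stable locus where every arrow map has maximal rank, and write $H\le GL(\bd)$ for the product of the $GL$-factors at every vertex except $z$, so $GL(\bd)=GL(z)\times H$ with $GL(z)=G$.

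First I would analyze $U$. Sending $V\in U$ to the flag whose $i$-th step is the row space of the composite $V_{w_i\to w_{i+1}}\cdots V_{w_{n-1}\to z}$, together with the row spaces of $V_{x\to z}$ and $V_{y\to z}$, defines a surjection $\pi\colon U\to X\times(B\backslash G)$. From injectivity of the arrow maps one checks that $H$ acts freely on $U$ with geometric quotient $X\times(B\backslash G)$, that $\pi$ is equivariant for the residual $GL(z)=G$-action (diagonal on the target), and that $\pi$ is a Zariski-locally trivial $H$-torsor, as $H$ is a product of general linear groups and hence special. This yields: (a) an order isomorphism from $G$-orbit closures in $X\times(B\backslash G)$ onto $GL(\bd)$-orbit closures in $U$, via $W\mapsto\pi^{-1}(W)$; (b) since $\pi$ is smooth surjective with smooth connected fibers, every smooth singularity class occurring in $W$ also occurs in $\pi^{-1}(W)$, by the argument of Proposition~\ref{prop:fiberbundle}(ii); (c) flat pullback along the torsor is an isomorphism $K_G(X\times(B\backslash G))\xto{\sim}K_{GL(\bd)}(U)$ carrying $[\cO_W]$ to $[\cO_{\pi^{-1}(W)}]$.

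Next, Lemma~\ref{lem:KXHG} applied with $H=B$, $H'=G$, the $G$-variety $X$, and $\phi=\mathrm{id}_G$ produces a $G$-equivariant isomorphism $X*_B G\cong X\times(B\backslash G)$, so Proposition~\ref{prop:fiberbundle} gives an order isomorphism $\mathscr{O}\leftrightarrow\mathscr{O}\cdot G$ between $B$-orbit closures in $X$ and $G$-orbit closures in $X\times(B\backslash G)$, the transfer of smooth singularity classes from $\mathscr{O}$ to $\mathscr{O}\cdot G$, and a restriction isomorphism $K_G(X\times(B\backslash G))\xto{\sim}K_B(X)$ with $[\cO_{\mathscr{O}\cdot G}]\mapsto[\cO_{\mathscr{O}}]$ (here $\mathscr{O}\cdot G=\mathscr{O}*_B G$ is flat over $B\backslash G$ with fiber $\mathscr{O}$ over the base point, which is exactly the copy of $X$ one restricts to). Finally, since $U$ is open and $GL(\bd)$-stable, $\mathscr{O}'\mapsto\mathscr{O}'\cap U$ is an order isomorphism from the $GL(\bd)$-orbit closures meeting $U$ onto those contained in $U$, it intertwines restriction of structure sheaves, and $\mathscr{O}'\cap U$ is dense open in $\mathscr{O}'$. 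Setting $\mathscr{O}':=\overline{\pi^{-1}(\mathscr{O}\cdot G)}$ (closure in $\rep_Q(\bd)$) and composing the three order isomorphisms proves (i), with image the orbit closures meeting $U$; composing the three singularity statements proves (ii); and taking $f$ to be the composite $K_{GL(\bd)}(\rep_Q(\bd))\to K_{GL(\bd)}(U)\xto{\sim}K_G(X\times(B\backslash G))\xto{\sim}K_B(X)$ (restriction to $U$, then the inverses of (c) and of the restriction isomorphism) proves (iii), since $[\cO_{\mathscr{O}'}]$ is carried to $[\cO_{\mathscr{O}'\cap U}]$, then to $[\cO_{\mathscr{O}\cdot G}]$, then to $[\cO_{\mathscr{O}}]$.

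The one nonformal step, and the expected main obstacle, is the analysis of $\pi$ in the second paragraph: identifying $U/H$ precisely with $X\times(B\backslash G)$ and proving that $\pi$ is a genuine principal $H$-bundle. The $H$-action is free, but $\pi$ is not a trivial product, so one must exhibit explicit local trivializations (for instance over the big Schubert cell of $B\backslash G$), and the $K$-theoretic descent of (c) is then standard but not entirely automatic. Once this is in place, parts (i)--(iii) follow mechanically from the fiber-bundle formalism of Section~\ref{sect:bundles}.
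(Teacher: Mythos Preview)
Your proposal is correct and follows essentially the same approach as the paper: the same type $D_{n+2}$ quiver oriented toward the branch point with the same dimension vector, the same open locus $U$ of maximal-rank maps, the same identification $U/H\simeq X\times(B\backslash G)$ (the paper's $H$ is called $G'$), and the same use of Lemma~\ref{lem:KXHG} and Proposition~\ref{prop:fiberbundle} for parts (i)--(iii). The only difference is packaging: where you worry about local triviality of the $H$-torsor to obtain $K$-theoretic descent, the paper simply cites a general result on equivariant $K$-theory for free actions of reductive groups, bypassing the need for explicit trivializations.
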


\begin{proof}
Let $X=\Gr_a(k^n) \times \Gr_b(k^n)$ be the given double Grassmannian.
Let $Q$ be the type $D$ quiver with $n+2$ vertices and all arrows oriented towards the branch point, and let $\bd$ be the dimension vector shown below.
\begin{equation}\label{eq:Dnintro}
(Q, \bd) =\quad
\vcenter{\hbox{\begin{tikzpicture}[point/.style={shape=circle,fill=black,scale=.5pt,outer sep=3pt},>=latex]
  \node[outer sep=-2pt] (y) at (-1.5,0.6) {$a$};
   \node[outer sep=-2pt] (x) at (-1.5,-0.6) {$b$};
   \node[outer sep=-2pt] (1) at (0,0) {$n$};
  \node[outer sep=-2pt] (2) at (2,0) {$n-1$};
   \node[outer sep=-2pt] (3) at (4,0) {$n-2$};
   \node[outer sep=-2pt] (3a) at (5,0) {};
   \node[outer sep=-2pt] (d) at (5.5,0) {${\cdots}$};
   \node[outer sep=-2pt] (4a) at (6,0) {};
   \node[outer sep=-2pt] (4) at (7,0) {$2$};
  \node[outer sep=-2pt] (5) at (8,0) {$1$};
  \path[->]
	(y) edge node[auto] {} (1)
  	(x) edge node[below] {} (1) 
  	(2) edge node[auto] {} (1) 
	(3) edge node[below] {} (2) 
	(3a) edge node[below] {} (3) 
	(4) edge node[below] {} (4a) 
	(5) edge node[auto] {} (4); 
	   \end{tikzpicture}}}
\end{equation}
Here, the representation variety is the product of matrix spaces
\begin{equation}\label{eq:repqd1}
\rep_Q(\bd) = \Mat(a,n) \times \Mat(b,n) \times \prod_{i=1}^{n-1} \Mat(i, i+1)
\end{equation}
carrying the natural right action of 
\begin{equation}
GL(\bd) = GL(a) \times GL(b) \times \prod_{i=1}^{n} GL(i).
\end{equation}

Consider the open subvariety $U \subseteq \rep_Q(\bd)$ where all matrices have maximal rank, and decompose
$GL(\bd) = G' \times G$ where  $G':=GL(a) \times GL(b) \times \prod_{i=1}^{n-1} GL(i)$ and $G=GL(n)$.
Then taking the quotient of the action of $G'$ on $U$, the first two factors of \eqref{eq:repqd1} become $X$ and the last product of factors becomes the variety of complete flags in $k^{n}$, which we identify as $B\backslash G$ where $B$ is a Borel subgroup of $G$.
Thus, we get an isomorphism of the quotient variety
$U/G' \simeq X \times (B\backslash G)$.
This isomorphism is $G$-equivariant with respect to the unused $G$-action on $U/G'$, which translates to the right action $(M,N,Bg)\cdot h=(Mh,Nh,Bgh)$ for $h \in G$.  By Lemma \ref{lem:KXHG}, we have a $G$-equivariant isomorphism $X \times (B \backslash G) \simeq X *_B G$. 

Applying Proposition \ref{prop:fiberbundle}(i) to the $G$-equivariant isomorphism $U/G'\cong X*_B G$, we get an inclusion preserving bijection between the set of $B$-orbit closures in $X$ and the set of $G$-orbit closures in $U/G'$. Part (i) of the present theorem now follows because there is an inclusion preserving bijection between the set of $GL(\bd)$-orbit closures in $\rep_Q(\bd)$ which have non-trivial intersection $U$, and the set of $G$-orbit closures of $U/G'$.

Similarly, Proposition \ref{prop:fiberbundle}(ii) applied to the $G$-equivariant isomorphism $U/G'\cong X*_B G$ shows that any equivalence class of singularity occuring in a $B$-orbit closure $\mathscr{O}\subseteq X$ occurs in a corresponding $G$-orbit closure $\mathscr{O}'' \subseteq U/G'$. Consider the pullback diagram below.
\begin{equation}\label{eq:cartesianDiagram}
\begin{tikzcd}
\mathscr{O}''\times_{U/G'} U \arrow[r] \arrow[d]
& U \arrow[d] \\
\mathscr{O}'' \arrow[r]
& U/G'
\end{tikzcd}
\end{equation}
Since the morphism $U\rightarrow U/G'$ is locally of finite presentation with smooth fibers, and flat as well by \cite[Theorem 23.1]{Matsumura},  it is a smooth morphism by \cite[Lemma 29.33.3]{stacks-project}. Since smooth morphisms are stable under base change \cite[Lemma 29.33.5]{stacks-project}, the map $\mathscr{O}''\times_{U/G'} U \rightarrow \mathscr{O}''$ is smooth. Furthermore, since this morphism has a reduced target with reduced fibers, $\mathscr{O}''\times_{U/G'} U$ is reduced \cite[Theorem 23.9]{Matsumura}. Consequently, the irreducible, $GL(\bd)$-invariant subscheme $\mathscr{O}''\times_{U/G'} U$ must be the intersection of an orbit closure in $\mathscr{O}'\subseteq \rep_{Q}(\bd)$ with $U$. This proves part (ii) of the present theorem.

For part (iii) we take the composition of group homomorphisms
\[
f\colon K_{GL(\bd)}(\rep_Q(\bd)) \onto K_{GL(\bd)}(U) \simeq K_G(U/G') \simeq K_B(X), 
\]
where the quotient map is induced by restriction to the open set \cite[Cor.~2.4]{Thomason87}, the first isomorphism is \cite[Prop.~2.3]{merkurjev05}, and the second is from Proposition \ref{prop:fiberbundle} (iii) where the pair $H \leq G$ in the statement of Proposition \ref{prop:fiberbundle} is $B \leq G$ here.
\end{proof}

The following corollary is immediate by restricting to the open subvariety $K\backslash G \subset X$.
\begin{corollary}\label{cor:GKinD}
Let $G=GL(a+b)$ and $K=GL(a)\times GL(b)$ embedded diagonally in $G$.  All statements in Theorem \ref{thm:GKinD} remain true upon replacing $X$ by $K\backslash G$.
\end{corollary}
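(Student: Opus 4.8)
The plan is to deduce this directly from Theorem~\ref{thm:GKinD} by restricting its construction to a smaller open set; being a corollary, there is no real obstacle here, only a small amount of bookkeeping. Write $n=a+b$ throughout. The first step is to recall the standard identification of $K\backslash G$ with the open $G$-orbit in $X=\Gr_a(k^n)\times\Gr_b(k^n)$: send the coset $Kg$ to the pair $(W_1,W_2)$ where $W_1$ is the span of the top $a$ rows of $g$ and $W_2$ the span of the bottom $b$ rows. Invertibility of $g$ forces $W_1\oplus W_2=k^n$, and this is a $G$-equivariant isomorphism onto the open, dense locus $X^\circ\subseteq X$ of complementary pairs. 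In particular $X^\circ$ is a $B$-stable open subvariety of $X$, $B$-equivariantly isomorphic to $K\backslash G$.

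Next I would re-run the proof of Theorem~\ref{thm:GKinD}, making a single change: replace the open subvariety $U\subseteq\rep_Q(\bd)$ (where all matrices have maximal rank) by its further open subvariety $U'$ defined by the additional condition that the stacked $n\times n$ matrix $\twobyone{M}{N}$, built from the two framing matrices $M\in\Mat(a,n)$ and $N\in\Mat(b,n)$, is invertible. Under the $GL(\bd)$-action this matrix is multiplied on the left by an invertible block-diagonal matrix and on the right by the $GL(n)$-factor, so the condition is $GL(\bd)$-invariant and $U'$ is $GL(\bd)$-stable; it also depends only on the images of $M$ and $N$ in $\Gr_a(k^n)\times\Gr_b(k^n)$. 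Since $U'\subseteq U$ is $G'$-stable, $G'$ still acts freely on $U'$, and the same quotient computation now gives $U'/G'\cong X^\circ\times(B\backslash G)\cong(K\backslash G)\times(B\backslash G)\cong(K\backslash G)*_B G$ as $G$-varieties, the last isomorphism by Lemma~\ref{lem:KXHG}. Applying Proposition~\ref{prop:fiberbundle} to the $B$-variety $K\backslash G$ yields statements (i) and (ii), and for (iii) I take the composition
\[
f\colon K_{GL(\bd)}(\rep_Q(\bd))\onto K_{GL(\bd)}(U')\simeq K_G(U'/G')\simeq K_B(K\backslash G)
\]
using restriction to an open set, \cite[Prop.~2.3]{merkurjev05}, and Proposition~\ref{prop:fiberbundle}(iii), exactly as in the proof of Theorem~\ref{thm:GKinD}.

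Finally I would record the compatibility that makes this a genuine specialization of Theorem~\ref{thm:GKinD}: the preimage of $X^\circ$ under the composite $U\to U/G'\cong X\times(B\backslash G)$ followed by projection to $X$ is precisely $U'$, so a $GL(\bd)$-orbit closure $\mathscr{O}'$ in $\rep_Q(\bd)$ meets $U'$ exactly when the corresponding $B$-orbit closure $\mathscr{O}\subseteq X$ meets $K\backslash G$, and in that case the corollary's map sends $\mathscr{O}\cap(K\backslash G)$ to $\mathscr{O}'$. The one point needing a little care --- and the closest thing to an obstacle --- is the elementary lemma used implicitly here: for a $B$-stable open subvariety $V$ of a $B$-variety, the assignment $\overline{O}\mapsto\overline{O}\cap V$ is an order-preserving bijection, with order-preserving inverse given by closure, between $B$-orbit closures meeting $V$ and $B$-orbit closures in $V$. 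Granting that, the corollary follows at once.
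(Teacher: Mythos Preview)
Your proof is correct and is essentially a detailed unpacking of the paper's one-line argument. The paper simply says the corollary is ``immediate by restricting to the open subvariety $K\backslash G \subset X$,'' and what you have written is precisely what that restriction amounts to: shrinking $U$ to the $GL(\bd)$-stable open set $U'$ lying over $X^\circ\cong K\backslash G$ and rerunning the same homogeneous-bundle and $K$-theory arguments verbatim.
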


\begin{remark}\label{SchubToTypeA}
By an argument similar to the proof of Theorem \ref{thm:GKinD}, all statements in Theorem \ref{thm:GKinD} remain true upon replacing $X$ by a type $A$ flag variety $B\backslash G$, and replacing the type $D$ quiver and dimension vector of \eqref{eq:Dnintro} with a type $A$ quiver and dimension vector of the form
\begin{equation}
\vcenter{\hbox{\begin{tikzpicture}[point/.style={shape=circle,fill=black,scale=.5pt,outer sep=3pt},>=latex]
  \node[outer sep=-2pt] (2') at (-2,0) {$n-1$};
   \node[outer sep=-2pt] (3'a) at (-3,0) {};
   \node[outer sep=-2pt] (d') at (-3.5,0) {${\cdots}$};
   \node[outer sep=-2pt] (4'a) at (-4,0) {};
   \node[outer sep=-2pt] (4') at (-5,0) {$2$};
  \node[outer sep=-2pt] (5') at (-6,0) {$1$};
   \node[outer sep=-2pt] (1) at (-0.5,0) {$n$};
  \node[outer sep=-2pt] (3) at (1,0) {$n-1$};
   \node[outer sep=-2pt] (3a) at (2,0) {};
   \node[outer sep=-2pt] (d) at (2.5,0) {${\cdots}$};
   \node[outer sep=-2pt] (4a) at (3,0) {};
   \node[outer sep=-2pt] (4) at (4,0) {$2$};
  \node[outer sep=-2pt] (5) at (5,0) {$1$};
  \path[->]
  (5') edge node[auto] {} (4')
        (4') edge node[auto] {} (4'a)
        (3'a) edge node[auto] {} (2')
  	(2') edge node[auto] {} (1) 
	(3) edge node[auto] {} (1) 
	(3a) edge node[auto] {} (3) 
	(4) edge node[auto] {} (4a) 
	(5) edge node[auto] {} (4); 
	   \end{tikzpicture}}} \qedhere
\end{equation}
\end{remark}

\subsection{Arrow contraction}\label{sect:contract}
We recall the following common construction from graph theory to establish notation.

\begin{definition}
Let $Q$ be a quiver and $a \in Q_1$ not a loop.  The \emph{contraction of $Q$ along $a$} is the quiver $Q^a$ with vertex set $Q^a_0:=(Q_0 \setminus \{ta, ha\}) \amalg \{\star\}$ and arrow set $Q^a_1:=Q_1 \setminus \{a\}$.  
The tail and head functions of the contraction are given by restricting the tail and head functions of $Q$ to $Q^a_1$, while replacing both $ta$ and $ha$ with $\star$ in the codomain.
\end{definition}

We call $A \subseteq Q_1$ \emph{admissible for contraction} if the smallest subquiver of $Q$ containing $A$ is a disjoint union of tree quivers. 
Given such an $A$, we recursively define the \emph{contraction of $Q$ along $A$} by $Q^{\emptyset}:= Q$ and $Q^A:=(Q^{A\setminus \{a\}})^a$ for any $a \in A$.
It can be easily checked that $Q^A$ is independent of the order in which the contractions are taken, and the arrow being contracted is never a loop.

See Figure \ref{fig:contract} for an example.
Note that contraction of $Q$ along $a$ determines a map on vertices $\nu^a \colon Q_0 \to Q^a_0$ sending both $ta, ha$ to $\star$, and each vertex besides $ta, ha$ to itself.  Composing these we get $\nu^A \colon Q_0 \to Q^A_0$ for admissible $A \subseteq Q_1$.
Then a dimension vector $\bd$ for $Q^A$ determines a dimension vector $\bd^A$ for $Q$ by $(\bd^A)(z) = \bd(\nu^A(z))$ for all $z \in Q_0$.

\begin{figure}
\[
Q=
\vcenter{\hbox{\begin{tikzpicture}[point/.style={shape=circle,fill=black,scale=.5pt,outer sep=3pt},>=latex]
   \node (1) at (0,0) {$z_2$};
   \node (3) at (1.85,2) {$z_1$};
   \node(4) at (1.5,0) {{$z_3$}};
\node (2) at (3,0) {{$z_4$}};
\draw[->, bend right = 12] (4) edge node[below = 0.05] {$a_5$} (2);
\draw[->, bend left =10] (3) edge node[left=0.25] {$a_3$}  (2);
\draw[->, bend right =10] (3) edge node[right=0.25] {$a_4$} (2);
\draw[->] (4) edge node[auto] {$a_2$} (1);
\draw[->] (1) edge node[auto] {$a_1$} (3);
\draw[->, bend left = 12] (4) edge[red] node[auto] {{\color{red}$\delta$}} (2);
   \end{tikzpicture}}}
\hspace{.8in}
Q^\delta=
\vcenter{\hbox{\begin{tikzpicture}[point/.style={shape=circle,fill=black,scale=.5pt,outer sep=3pt},>=latex]
   \node (1) at (0,0) {$z_2$};
   \node (3) at (2,2) {$z_1$};
\node (2) at (2,0) {$\star$} edge[in=30,out=-30,loop] node[right] {$a_5$} ();
\draw[->, bend left =10] (3) edge node[left=0.15] {$a_3$}  (2);
\draw[->, bend right =10] (3) edge node[right=0.15] {$a_4$} (2);
\draw[->] (2) edge node[auto] {$a_2$} (1);
\draw[->] (1) edge node[auto] {$a_1$} (3);
   \end{tikzpicture}}}
\]
\caption{The quiver on the right is obtained by contracting the arrow $\delta$ in the quiver on the left.}\label{fig:contract}
\end{figure}

Given a quiver $Q$ and set of arrows $A$ admissible for contraction, 
let $H^A$ be the following closed subgroup of $GL(\bd^A)$:
\begin{equation}\label{eq:HinGL}
H^A = \setst{(g_z)_{z \in Q_1} \in GL(\bd^A)}{g_{t(a)} = g_{h(a)}\ \forall a \in A}.
\end{equation}
There is a group isomorphism $\tau: \GL(\bd)\xrightarrow{\sim} H^A$, which maps $(g_z)_{z\in Q^A_0}$ to $(g_{\nu^A(z)})_{z\in Q_0}$. Then, with respect to the usual conjugation actions of $GL(\bd)$ on $\rep_{Q^A}(\bd)$ and $H^A$ on $\rep_Q(\bd^A)$ and the map $\tau$, we can equivariantly embed $\psi: \rep_{Q^A}(\bd) \hookrightarrow \rep_Q{(\bd^A)}$ as a closed subvariety by identifying $(V_a)_{a \in Q^A_1}$ with $(W_a)_{a \in Q_1}$ where $W_a = V_a$ if $a \notin A$, and $W_a = 1$ (an appropriately sized identity matrix) if $a \in A$. 
In what follows, we identify $\rep_{Q^A}(\bd)$ and its $GL(\bd)$-action with $\psi(\rep_{Q^A}(\bd))$ and its $H^A$-action, and consider the homogeneous fiber bundle $\rep_{Q^A}(\bd) *_{H^A} GL(\bd^A)$.

\begin{lemma}\label{lem:trivialBundleForMainTheorem}
The homogeneous fiber bundle $\rep_{Q^A}(\bd) *_{H^A} GL(\bd^A)$ is $GL(\bd^A)$-equivariantly isomorphic to $\rep_{Q^A}(\bd)\times H^A\backslash GL(\bd^A)$.
\end{lemma}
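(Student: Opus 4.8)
The plan is to obtain the lemma as a direct instance of Lemma \ref{lem:KXHG}, applied with $G = GL(\bd^A)$, with $H' = H$ itself, and with $X = \rep_{Q^A}(\bd)$ regarded as an $H$-variety through the evident identification $H \cong GL(\bd) = \prod_{w\in Q^A_0} GL(\bd(w))$. Under this setup $\rep_{Q^A}(\bd) *_H GL(\bd^A)$ is exactly the homogeneous fiber bundle appearing in Proposition \ref{prop:doublebundle}, and $H$ is closed and connected (being a product of general linear groups), so the hypotheses of Section \ref{sect:bundles} are in force. The only thing left to construct is a morphism of algebraic groups $\phi\colon GL(\bd^A)\to H$ with $\phi(h)=h$ for all $h\in H$, i.e. a group-theoretic retraction of $GL(\bd^A)$ onto $H$.

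First I would record the observation that makes such a $\phi$ available. Let $T\subseteq Q$ be the smallest subquiver containing $A$ together with all vertices of $Q$; by admissibility each connected component of $T$ is a tree. Since $\nu^A$ is constant on each connected component of $T$, we have $\bd^A(z)=\bd^A(z')$ whenever $z,z'$ lie in the same component. As in the proof of Proposition \ref{prop:doublebundle}, fix a distinguished vertex $z_0$ in each connected component of $T$.

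Next I would define $\phi\colon GL(\bd^A)=\prod_{z\in Q_0}GL(\bd^A(z))\to H$ by $\phi\bigl((g_z)_{z\in Q_0}\bigr)=(h_z)_{z\in Q_0}$, where $h_z:=g_{z_0}$ with $z_0$ the distinguished vertex of the component of $T$ containing $z$; this is well typed by the previous paragraph. Then: (a) $(h_z)\in H$, since for $a\in A$ the vertices $ta$ and $ha$ lie in the same component of $T$, so $h_{ta}=h_{ha}$; (b) $\phi$ is a homomorphism of algebraic groups, because multiplication in $GL(\bd^A)$ is componentwise and $\phi$ simply duplicates one chosen component across each block; and (c) $\phi$ restricts to the identity on $H$, since for $(g_z)\in H$ all entries within a block of $T$ already agree, so $h_z=g_{z_0}=g_z$. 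Lemma \ref{lem:KXHG} then yields a $GL(\bd^A)$-equivariant isomorphism $\rep_{Q^A}(\bd)*_H GL(\bd^A)\xrightarrow{\sim}\rep_{Q^A}(\bd)\times\bigl(H\backslash GL(\bd^A)\bigr)$, where the target carries the right action $(V,Hg)\cdot g'=(V\cdot\phi(g'),Hgg')$ specified in that lemma; this is the claimed $GL(\bd^A)$-action on the product.

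There is no real obstacle here: the only points requiring care are the bookkeeping ones — verifying that $\bd^A$ is genuinely constant on components of $T$ so that $\phi$ is well defined as a map into $H$, and being explicit that the product variety is being equipped with the $\phi$-twisted right action of $GL(\bd^A)$ coming from Lemma \ref{lem:KXHG} rather than the action that is trivial on the first factor.
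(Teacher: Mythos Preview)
Your proposal is correct and follows essentially the same approach as the paper: both apply Lemma~\ref{lem:KXHG} by constructing a retraction $\phi\colon GL(\bd^A)\to H$ that, on each connected component of the subquiver determined by $A$, replaces every entry $g_z$ by the entry at a fixed distinguished vertex of that component (and leaves $g_z$ unchanged when $z$ is not incident to any arrow of $A$, which in your formulation corresponds to a singleton component of $T$). Your write-up is simply more detailed in verifying that $\phi$ lands in $H$, is a homomorphism, and restricts to the identity on $H$.
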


\begin{proof}
We will define a right action of $GL(\bd^A)$ on $\rep_{Q^A}(\bd)$ that restricts to the  $H^A$-action. The result then follows from Lemma \ref{lem:KXHG}. To define the desired $GL(\bd^A)$-action, it suffices to produce a group homomorphism $\phi: \GL(\bd^A)\rightarrow H^A$ satisfying $\phi(h) = h$ for each $h\in H^A$. 

Let $T \subseteq Q$ be the smallest subquiver containing $A$ and all vertices of $Q$.   
Let $T_1,\dots, T_r$ denote the connected components of $T$ and choose  a vertex $z_i\in T_i$ for each $1\leq i\leq r$.  
Define a group homomorphism $\phi: GL(\bd^A)\rightarrow H^A$ by $\phi((g_z)_{z\in Q_0}) = (h_z)_{z\in Q_0}$ where $h_z = g_{z_i}$ for the unique $i$ such that $z\in T_i$. Observe that $\phi(h) = h$ for each $h\in H^A$.
\end{proof}

\begin{proposition}\label{prop:doublebundle}
Let $A\subseteq Q_1$ be admissible for contraction and $\bd$ a dimension vector for $Q^A$.
Let $U \subseteq \rep_Q(\bd^A)$ where the matrix over each $a \in A$ is invertible.
Then there is a $GL(\bd^A)$-equivariant isomorphism $\rep_{Q^A}(\bd) *_{H^A} GL(\bd^A) \to U$. 
\end{proposition}

\begin{proof}
By Lemma \ref{lem:trivialBundleForMainTheorem}, it suffices to produce a $GL(\bd^A)$-equivariant isomorphism from $U$ to $\rep_{Q^A}(\bd)\times H^A\backslash GL(\bd^A)$. To begin, we will define a morphism 
\begin{equation}
e\colon U^A \to GL(\bd^A), \quad U^A := \prod_{a \in A} \Mat^\circ(\bd^A(ta), \bd^A(ha)),
\end{equation}
where $\Mat^\circ$ denotes the subset of matrices of full rank. Let $T \subseteq Q$ be the smallest subquiver containing $A$ and all vertices of $Q$, and let $T_1,\dots, T_r$ be its connected components, recalling that each is a tree since $A$ is admissible. Arbitrarily choose distinguished vertices $z_1,\dots, z_r$ with $z_i\in T_i$.
 For $z \in T_i$, define a sequence of symbols
\begin{equation}\label{eq:walk}
z_i\rightsquigarrow z := a_1^{\varepsilon_1} \cdots a_t ^{\varepsilon_t}
\end{equation}
where each $a_j \in A$ and $\varepsilon_j = \pm 1$.  If $z=z_i$, we take the sequence to be empty by convention, otherwise the fact that $T_i$ is a tree implies that there is a unique minimal sequence of arrows $(a_1, \dotsc, a_t)$ in $A$ such that walking along these arrows (with or against the direction of the arrows) takes one from $z_i$ to $z$ without leaving $T_i$. 
For each $j$, we set $\varepsilon_j = 1$ if $a_j$ is traversed in this walk with the direction of the arrow, and $\varepsilon_j = -1$ if $a_j$ is traversed against the direction of the arrow. Now to define the morphism $e$, given $V^A :=(V_a)_{a\in A} \in U^A$ and $z \in Q_0$ we associate an element $V^A(z) \in GL(\bd^A(z))$ as follows. Suppose $z\in T_i$. 
Recalling \eqref{eq:walk}, we set $V^A(z) = V_{a_1}^{\varepsilon_1} \cdots V_{a_t}^{\varepsilon_t}$, where $V^A(z)=1$ if $z=z_i$.  Note that this is indeed well-defined since each $V_{a_j} \in GL(\bd^A(z))$ by definition of $\bd^A$.  This defines $e$ by $e(V^A) = (V^A(z))_{z \in Q_0}$.

Using the above notation, and letting every appearance of a dot $\cdot$ below denote the usual action of $GL(\bd^A)$ on $\rep_{Q}(\bd^A)$, our desired isomorphism $E: U\rightarrow \rep_{Q^A}(\bd)\times H^A\backslash GL(\bd^A)$ is defined by
\[
V = (V_a)_{a\in Q_0}  \mapsto (V\cdot e(V^A)^{-1}, \enskip H^A e(V^A)),
\]
recalling that we have identified $\rep_{Q^A}(\bd)$ with a closed subvariety in $\rep_{Q}(\bd^A)$ (see the discussion preceding Lemma \ref{lem:trivialBundleForMainTheorem}). 
One may check from the definitions that this map is equivariant with respect to the natural right $GL(\bd^A)$-action on the source $U$, and the diagonal right $GL(\bd^A)$-action on the target, where $GL(\bd^A)$ acts on $\rep_{Q^A}(\bd)$ via the homomorphism $\phi: GL(\bd^A)\rightarrow H^A$ by $\phi((g_z)_{z\in Q_0}) = (h_z)_{z\in Q_0}$ where $h_z = g_{z_i}$, if $z\in T_i$, from Lemma \ref{lem:trivialBundleForMainTheorem}. Then, the (well-defined) inverse morphism to $E$ is given by
\[
E^{-1}: \rep_{Q^A}(\bd)\times H^A\backslash GL(\bd^A)\rightarrow U, \quad (W, H^Ag)\mapsto (W \cdot \phi(g)^{-1})\cdot g. \qedhere
\]
\end{proof}

We use the above results to reduce the problems we study for representation varieties of arbitrary type $D$ quivers (in particular, of any orientation) to the study of certain representation varieties for the specific quivers $Q^*(n)$ of \eqref{eq:Dnquiver}.
Let $(Q,\bd)$ be a type $D$ quiver with dimension vector.  We want to define an associated $(Q^*, \bd^*)$.
Let $\gamma, \delta \in Q_1$ be the arrows in the length 1 branches.  After perhaps replacing $Q$ with $Q^{\rm op}$,  we can assume at least one of $\gamma, \delta$ points towards the branch point (say $\delta$); by Lemma \ref{lem:Qop}, this does not affect the equivariant geometry.
It is visually apparent that we can take $n$ large enough so that $Q^*:=Q^*(n)$ contracts to $Q$ via some admissible set $A$ of arrows of $Q^*$, and this determines a dimension vector $\bd^A$ as defined above.
In the rest of the article, we denote the associated map on vertices by $\nu\colon Q^*_0 \to Q_0$, leaving the set of arrows implicit.

To define $\bd^*$, we distinguish two cases.  If $\gamma$ and $\delta$ are composable, take $\bd^*:=\bd^A$.
\begin{equation}\label{eq:fundorient1}
\vcenter{\hbox{\begin{tikzpicture}[point/.style={shape=circle,fill=black,scale=.5pt,outer sep=3pt},>=latex]
\node at (-1,1.5) {$\underline{(Q, \bd)}$};
  \node[outer sep=-2pt] (y) at (-1.5,0.6) {$\bd(1)$};
   \node[outer sep=-2pt] (x) at (-1.5,-0.6) {$\bd(2)$};
   \node[outer sep=-2pt] (1) at (0,0) {$\bd(3)$};
   \node[outer sep=-2pt] (2) at (1,0) {};
   \node (d) at (1.5,0) {${\cdots}$};
  \path[->]
	(1) edge node[above] {$\gamma$} (y)
  	(x) edge node[below] {$\delta$} (1);
\draw[-] (2) edge node[auto] {} (1);
		   \end{tikzpicture}}}
\qquad \qquad \qquad
\vcenter{\hbox{\begin{tikzpicture}[point/.style={shape=circle,fill=black,scale=.5pt,outer sep=3pt},>=latex]
\node at (-1,1.5) {$\underline{(Q^*, \bd^*)}$};
  \node[outer sep=-2pt] (y1) at (-3,0) {$\bd(1)$};
  \node[outer sep=-2pt] (y2) at (-1.5,0.6) {$\bd(3)$};
   \node[outer sep=-2pt] (x) at (-1.5,-0.6) {$\bd(2)$};
   \node[outer sep=-2pt] (1) at (0,0) {$\bd(3)$};
   \node[outer sep=-2pt] (2) at (1,0) {};
   \node (d) at (1.5,0) {${\cdots}$};
  \path[->]
	(y2) edge node[above] {$\gamma$} (y1)
  	(x) edge node[below] {$\delta$} (1);
\draw[->,dashed] (y2) edge node[above] {} (1);
\draw[-] (2) edge node[auto] {} (1);
		   \end{tikzpicture}}}
\end{equation}
If both $\gamma, \delta$ point towards the branch, however, we must take $\bd^*(x_0)=0$, and $\bd^*$ equal to $\bd^A$ at the other vertices.
\begin{equation}\label{eq:fundorient2}
\vcenter{\hbox{\begin{tikzpicture}[point/.style={shape=circle,fill=black,scale=.5pt,outer sep=3pt},>=latex]
\node at (-1,1.5) {$\underline{(Q, \bd)}$};
  \node[outer sep=-2pt] (y) at (-1.5,0.6) {$\bd(1)$};
   \node[outer sep=-2pt] (x) at (-1.5,-0.6) {$\bd(2)$};
   \node[outer sep=-2pt] (1) at (0,0) {$\bd(3)$};
   \node[outer sep=-2pt] (2) at (1,0) {};
   \node (d) at (1.5,0) {${\cdots}$};
  \path[->]
	(y) edge node[above] {$\gamma$} (1)
  	(x) edge node[below] {$\delta$} (1);
\draw[-] (2) edge node[auto] {} (1);
		   \end{tikzpicture}}}
\qquad \qquad \qquad
\vcenter{\hbox{\begin{tikzpicture}[point/.style={shape=circle,fill=black,scale=.5pt,outer sep=3pt},>=latex]
\node at (-1,1.5) {$\underline{(Q^*, \bd^*)}$};
  \node[outer sep=-2pt] (y1) at (-3,0) {0};
  \node[outer sep=-2pt] (y2) at (-1.5,0.6) {$\bd(1)$};
   \node[outer sep=-2pt] (x) at (-1.5,-0.6) {$\bd(2)$};
   \node[outer sep=-2pt] (1) at (0,0) {$\bd(3)$};
   \node[outer sep=-2pt] (2) at (1,0) {};
   \node (d) at (1.5,0) {${\cdots}$};
  \path[->]
	(y2) edge node[above] {} (y1)
  	(x) edge node[below] {$\delta$} (1);
\draw[->] (y2) edge node[above] {$\gamma$} (1);
\draw[-] (2) edge node[auto] {} (1);
		   \end{tikzpicture}}}
\end{equation}

The following lemma is now an immediate consequence of the above, Proposition \ref{prop:doublebundle}, and Lemma \ref{lem:trivialBundleForMainTheorem}.

\begin{lemma}\label{lem:QtoQn}
Let $(Q,\bd)$ be a type $D$ quiver with dimension vector, and  $(Q^*, \bd^*)$ defined immediately above.
Then there is a $GL(\bd^*)$-stable, open subvariety $X_Q \subseteq \rep_{Q^*}(\bd^*)$ and 
 $GL(\bd^*)$-equivariant isomorphisms 
 \[X_Q \simeq \rep_Q(\bd) *_{GL(\bd)} GL(\bd^*) \simeq \rep_Q(\bd) \times (GL(\bd)\backslash GL(\bd^*)), \]
 where $GL(\bd)$ is identified as a subgroup $H^A\leq GL(\bd^*)$ as in \eqref{eq:HinGL}, that is,
 \begin{equation}
H^A = \setst{(g_z)_{z \in Q_1} \in GL(\bd^*)}{g_{t(a)} = g_{h(a)}\ \forall a \in A\setminus \beta_0}.
\end{equation}
\end{lemma}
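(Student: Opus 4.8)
The plan is to read the lemma off from Proposition \ref{prop:doublebundle} and Lemma \ref{lem:trivialBundleForMainTheorem}, applied to the bipartite quiver $Q^* = Q^*(n)$ together with the admissible set $A$ and the dimension vector $\bd$ on the contraction $(Q^*)^A = Q$ constructed just above. The one point requiring care is the possibly degenerate vertex $x_0$ produced in the second case of the construction of $\bd^*$; dealing with it is precisely what forces the set $A \setminus \beta_0$, rather than all of $A$, into the statement.

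I would take $X_Q \subseteq \rep_{Q^*}(\bd^*)$ to be the open subvariety on which the matrix over every arrow of $A \setminus \beta_0$ is invertible. Each such matrix is square, because an arrow $a \in A$ has its two endpoints identified in $(Q^*)^A = Q$, forcing $\bd^*(ta) = \bd^*(ha)$; and $X_Q$ is $GL(\bd^*)$-stable because the action $(g_z) \cdot V = (g_{ta}^{-1} V_a g_{ha})$ preserves the rank of each matrix. Now I would split into the two cases of the construction. When $\gamma$ and $\delta$ are composable we have $\beta_0 \notin A$ and $\bd^* = \bd^A$, so Proposition \ref{prop:doublebundle} applies directly to $(Q^*, A, \bd)$ and identifies $X_Q$ with $\rep_Q(\bd) *_H GL(\bd^*)$, where $H \leq GL(\bd^*)$ is the subgroup defined by $g_{ta} = g_{ha}$ for all $a \in A = A \setminus \beta_0$. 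When instead $\gamma$ and $\delta$ both point toward the branch point we have $\bd^*(x_0) = 0$, so the vertex $x_0$ and the arrow $\beta_0$ contribute only trivial factors to $\rep_{Q^*}(\bd^*)$ and to $GL(\bd^*)$; deleting them identifies these with $\rep_{Q^{**}}(\bd')$ and $GL(\bd')$, where $Q^{**} := Q^* \setminus \{x_0,\beta_0\}$ and $\bd'$ is the restriction of $\bd^*$, and one checks that $A \setminus \beta_0$ is admissible for $Q^{**}$, that $(Q^{**})^{A \setminus \beta_0} = Q$, and that $\bd'$ is the corresponding pullback of $\bd$. Applying Proposition \ref{prop:doublebundle} to $(Q^{**}, A \setminus \beta_0, \bd)$ now identifies $X_Q$ with $\rep_Q(\bd) *_H GL(\bd^*)$, where $H$ is exactly the subgroup cut out by $g_{ta} = g_{ha}$ for $a \in A \setminus \beta_0$, as in the statement.

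In both cases the relations $g_{ta} = g_{ha}$ propagate along each connected component of $A \setminus \beta_0$, and since every vertex in such a component has the same value of $\bd^*$ (they share an image vertex in $Q$), the group $H$ is canonically the diagonal copy of $GL(\bd) = \prod_{v \in Q_0} GL(\bd(v))$ inside $GL(\bd^*)$; this yields the first isomorphism $X_Q \simeq \rep_Q(\bd) *_{GL(\bd)} GL(\bd^*)$. Finally I would invoke Lemma \ref{lem:trivialBundleForMainTheorem}, whose hypotheses are satisfied here, to split the bundle as $\rep_Q(\bd) *_{GL(\bd)} GL(\bd^*) \simeq \rep_Q(\bd) \times (GL(\bd) \backslash GL(\bd^*))$, giving the second isomorphism. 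I expect the only genuine obstacle to be the bookkeeping in the degenerate case $\bd^*(x_0) = 0$: one must check that excising $x_0$ and $\beta_0$ alters neither the representation variety, nor the base change group, nor the contraction onto $Q$, and that it is exactly this excision that replaces $A$ by $A \setminus \beta_0$ in the description of $H$. Everything else is a direct application of the two cited results together with the case analysis recorded immediately before the lemma.
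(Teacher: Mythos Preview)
Your proposal is correct and follows exactly the route the paper intends: the paper states the lemma as ``an immediate consequence of the above, Proposition~\ref{prop:doublebundle}, and Lemma~\ref{lem:trivialBundleForMainTheorem},'' and you have carefully unpacked that consequence, including the case split governing whether $\beta_0$ must be excised. Your handling of the degenerate case $\bd^*(x_0)=0$ via the auxiliary quiver $Q^{**}$ is precisely the bookkeeping the paper leaves implicit.
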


\begin{example}\label{eg:extension1}
If $Q$ is the quiver
\[
Q=\qquad\vcenter{\hbox{\begin{tikzpicture}[yscale=0.6,point/.style={shape=circle,fill=black,scale=.5pt,outer sep=3pt},>=latex]
  \node[outer sep=-2pt] (1) at (-1,3) {$1$}; 
   \node[outer sep=-2pt] (2) at (-1,1) {$2$};
   \node[outer sep=-2pt] (3) at (0,2) {$3$};
   \node[outer sep=-2pt] (4) at (1.25,2) {$4$};
  \node[outer sep=-2pt] (5) at (2.5,2) {$5$};
   \node[outer sep=-2pt] (6) at (3.75,2) {$6$};
  \node[outer sep=-2pt] (7) at (5,2) {$7$};
  \path[->]
        (3) edge node[above, pos=0.3] {$\zg$} (1) 
  	(2) edge node[below,pos=0.3] {$\zd$} (3) 
	(3) edge node[below,pos=0.3] {} (4)
  	(4) edge node[below,pos=0.3] {} (5) 
	(6) edge node[below,pos=0.3] {} (5)
	(7) edge node[below] {} (6);
   \end{tikzpicture}}}, 
\]
then $Q^*(4)$ shown below admits a contraction to $Q$, where contracted arrows are dashed.
\[
Q^*(4) =\quad
\vcenter{\hbox{\begin{tikzpicture}[point/.style={shape=circle,fill=black,scale=.5pt,outer sep=3pt},>=latex]
   \node[outer sep=-2pt] (x_0) at (-2,0) {${x_0}$};
   \node[outer sep=-2pt] (x) at (-1,-0.6) {${y'_0}$};
  \node[outer sep=-2pt] (y) at (-1,0.6) {${y_0}$};
   \node[outer sep=-2pt] (1) at (0,0) {${x_1}$};
  \node[outer sep=-2pt] (2) at (1.5,0) {${y_1}$};
   \node[outer sep=-2pt] (3) at (3,0) {${x_2}$};
   \node[outer sep=-2pt] (4) at (4.5,0) {${y_2}$};
   \node[outer sep=-2pt] (5) at (6,0) {${x_3}$};
  \node[outer sep=-2pt] (6) at (7.5,0) {${y_3}$};
  \node[outer sep=-2pt] (7) at (9,0) {${x_4}$};
  \node[outer sep=-2pt] (8) at (10.5,0) {${y_4}$};
  \path[->]
        (y) edge node[above] {${\beta_0}$} (x_0) 
	(y) edge[red,dashed] node[auto] {${\za_1}$} (1)
  	(x) edge node[below] {${\za'_1}$} (1) 
  	(2) edge[red,dashed] node[below] {${\zb_1}$} (1) 
	(2) edge node[below] {${\za_2}$} (3) 
	(4) edge[red,dashed] node[below] {${\zb_2}$} (3)
	(4) edge node[below] {${\za_3}$} (5)
	(6) edge node[below] {${\zb_3}$} (5)
	(6) edge[red,dashed] node[below] {${\za_4}$} (7)
	(8) edge node[below] {${\zb_4}$} (7);
	   \end{tikzpicture}}}
\]
If $\bd$ is a dimension vector for $Q$, then $X_Q$ is the open subvariety of $\rep_{Q^*(4)}(\bd^*)$ where the maps over arrows $\alpha_1, \beta_1, \beta_2, \alpha_4$ are invertible.
\end{example}

\section{Quiver rank functions}\label{sect:quiverRanks}
We now define some collections of matrices associated to quiver representations, whose ranks will be used to describe orbits and orbit closures in quiver representation varieties.

\subsection{Preliminary matrix definitions}\label{sect:prelimMatrixDefs}
The matrices in this paper usually have their rows and columns partitioned into blocks labeled by vertices of some quiver (with repetition allowed).  We follow \cite[\S4]{RZ13} to formalize this.

\begin{definition}
Let $Q$ be a quiver and $\br=(r_1, \dotsc, r_m)$, $\bc=(c_1, \dotsc, c_n)$ be two sequences in $Q_0$.  A \emph{$Q$-matrix} is a matrix such that:
\begin{enumerate}[(QM1)]
\item its rows are labeled from top to bottom by $\br$;
\item its columns are labeled from left to right by $\bc$;  
\item the entry in row $i$ and column $j$ is a formal $k$-linear combination of paths in $Q$ from the vertex $r_i$ to the vertex $c_j$ (with length 0 trivial paths allowed).
\end{enumerate}

Let $\Phi$ be a $Q$-matrix and $V=(V_a) \in \rep_Q(\bd)$.  Consider an entry $\sum_l \lambda_l p_l$ of $\Phi$, where each $\lambda_l \in k$ and each $p_l$ is a path in $Q$ with the same source and target.
The \emph{evaluation of $\Phi$ at $V$}, denoted $\Phi_V$, is the $\sum_i \bd(r_i) \times \sum_j \bd(c_j)$ matrix over $k$ obtained by replacing each entry as above with the corresponding linear combination of matrices $\sum_l \lambda_l V_{p_l}$.
\end{definition}

Note that $\Phi_V$ is naturally equipped with a block subdivision with block row $r_i$ of height $\bd(r_i)$ and block column $c_j$ of width $\bd(c_j)$.

For an arbitrary block matrix $M$ with block rows labeled from top to bottom by $\br=(r_1,\dots, r_m)$ and block columns labeled from left to right by $\bc = (c_1,\dots,c_n)$, define $M_{r_i, c_j}$ to be the intersection of block row $r_i$ with block column $c_j$. 
Define $[c_i,c_j]$, $i\leq j$, to be the list $c_i,c_{i+1},\dots, c_j$, and define $M_{[c_i,c_j]}$ to be the submatrix of $M$ consisting of block columns in the list $[c_i,c_j]$. 
Let $N$ be a block matrix with block columns labeled from left to right by $d_1,\dots, d_r, c_{i+1},\dots, c_n$ (i.e. the last $n-i$ column labels of $N$ agree with the last $n-i$ column labels of $M$) such that for each $i+1\leq l\leq n$, the width of block column $c_l$ of $N$ is the same as the width of block column $c_l$ of $M$. 
Given $i<j\leq n$, define
\begin{equation}\label{eq:splitMatrix}
(M,N)_{[[c_i,c_j]]} := \begin{bmatrix}M_{[c_1,c_{i}]}&0&M_{[c_{i+1},c_j]} \\ 0 &N_{[d_1,d_r]}&N_{[c_{i+1},c_j]} \end{bmatrix}.
\end{equation}
Each 0 in \eqref{eq:splitMatrix} denotes a matrix of zeros, with sizes determined by context.

\subsection{Matrices relevant to type $D$ quivers}\label{sect:zigzagDefs}
Fix $n$ and let $Q^*:=Q^*(n)$ be as in \eqref{eq:Dnquiver}. 
Define two $Q^*$-matrices 
as follows, where unlabeled entries are zeros
\begin{equation}\label{eq:M}
A :=
 \begin{blockarray}{ccccccccccc}
          & x_0 & x_1 & x_2 & \cdots & x_{n-1} & x_n\\
      \begin{block}{c[cccccccccc]}
y_0 &{\beta_0}&{\alpha_1} & & &&& \\
y_1 && {\beta_1}&{\alpha_2} & &&&\\
y_2 && & {\beta_{2}} & {\alpha_3}&&\\
\vdots && &&\ddots&  \ddots & \\
y_{n-1} &&&&&{\beta_{n-1}}&{\alpha_n}\\
y_n &&&&&&{\beta_n}\\
\end{block}
\end{blockarray}
\qquad
B :=
 \begin{blockarray}{cccccccccc}
          & x_1 & x_2 & \cdots & x_{n-1} & x_n\\
      \begin{block}{c[ccccccccc]}
y_0' &{\alpha_1'} & & &&& \\
y_1 & {\beta_1}&{\alpha_2} & &&&\\
y_2 & & {\beta_{2}} & {\alpha_3}&&\\
\vdots & &&\ddots&  \ddots & \\
y_{n-1} &&&&{\beta_{n-1}}&{\alpha_n}\\
y_n &&&&&{\beta_n}\\
\end{block}
\end{blockarray}.
\end{equation}

Define a partial order on $Q^*_1$ by defining the following two maximal chains
\begin{equation}
 \beta_0<\alpha_1< \beta_1<\dots<\alpha_n<\beta_n, \quad\text{and}\quad \alpha_1'< \beta_1<\dots<\alpha_n<\beta_n. 
\end{equation}

Given $\gamma, \delta\in Q^*_1$ satisfying $\gamma\leq \delta$ in the above partial order, define the associated \emph{zig-zag matrix} denoted by $[\gamma,\delta]$ to be the rectangular submatrix extracted from $A$  or $B$ whose upper left entry is $\gamma$ and lower right entry is $\delta$.
For example, 
\begin{equation}
[\beta_1,\alpha_3]=\begin{bmatrix}{\beta_1}&{\alpha_2}&0\\0&{\beta_2}&{\alpha_3}\end{bmatrix}, \qquad 
[\alpha_1', \beta_2] = \begin{bmatrix}{\alpha_1'}&0\\ {\beta_1}&{\alpha_2}\\ 0&{\beta_2}\end{bmatrix}, 
\end{equation}
while $[\beta_0, \alpha'_1]$ and $[\alpha_4, \beta_1]$ are not defined. 

Building on the notation in \eqref{eq:splitMatrix}, we define a matrix denoted $\llbracket\gamma,\delta\rrbracket$ for all $\alpha_1\leq \gamma\leq \delta$.
Looking at the examples in Figure \ref{fig:matrixexamples} first will help orient the reader to the general notation.
To begin, we set 
\begin{equation}\label{eq:alpha1def}
\llbracket \alpha_1,\alpha_1\rrbracket := \begin{bmatrix}\beta_0&\alpha_1\\0&\alpha_1'\end{bmatrix} \enskip \text{ and }\enskip \llbracket\alpha_1,\delta \rrbracket:= \begin{bmatrix} [\beta_0,\delta]_{[x_0,x_0]} &[\alpha_1,\delta]\\0&[\alpha_1',\delta] \end{bmatrix}, \delta > \alpha_1,
\end{equation}
where, as usual, the zeros in the above matrices have their sizes determined by context. 
For $i\geq 2$, and $\delta=\alpha_j$ or $\delta=\beta_j$ with $j\geq i$, we set 
\begin{equation}
 \llbracket\alpha_i,\delta\rrbracket:= ([\beta_0,\delta] , [\alpha_1',\delta])_{[[x_{i-1},x_j ]]}.
\end{equation}
For $i\geq 1$ and $\delta=\alpha_{j+1}$ with $j+1 > i$ or $\delta=\beta_j$ with $j \geq i$, we set
\begin{equation}
\llbracket \beta_i,\delta\rrbracket:=  \left( ( [\beta_0,\delta]^T, [\alpha_1',\delta]^T)_{[[y_{i-1},y_j ]]}\right)^T
\end{equation}
where $^T$ denotes the transpose of a matrix.

\begin{figure}
\[
\begin{array}{ll}
\llbracket\alpha_2,\alpha_3\rrbracket = 
\left[\begin{array}{cc|c|cc}
 \beta_0 & \alpha_1 & 0 & 0 & 0\\
 0 & \beta_1 & 0 & \alpha_2 & 0\\
 0 & 0 & 0 & \beta_2 & \alpha_3\\
\hline
 0 & 0 & \alpha'_1 & 0 & 0\\
 0 & 0& \beta_1 & \alpha_2 & 0\\
 0 & 0 & 0 & \beta_2 & \alpha_3\\
\end{array}\right]
\qquad
&\llbracket\beta_1,\alpha_3\rrbracket =
\left[\begin{array}{cccc|ccc}
 \beta_0 & \alpha_1 & 0 & 0 & 0 & 0 & 0\\
\hline
 0 & 0 & 0 & 0 & \alpha'_1 & 0 & 0\\
\hline
 0 & \beta_1 & \alpha_2 & 0 & \beta_1 & \alpha_2 & 0 \\
 0 & 0 & \beta_2 & \alpha_3 & 0 & \beta_2 & \alpha_3 \\
\end{array}\right]
\\
\\
\llbracket\alpha_1,\beta_2\rrbracket = 
\left[\begin{array}{c|cc}
 \beta_0 & \alpha_1  & 0 \\
 0 & \beta_1 & \alpha_2 \\
 0 & 0 & \beta_2 \\
\hline
 0 &  \alpha'_1 & 0 \\
 0 &  \beta_1 & \alpha_2 \\
 0 & 0 & \beta_2 \\
\end{array}\right]
& \llbracket\alpha_1,\beta_2\rrbracket^0 = 
\left[\begin{array}{c|cc}
 0 & \alpha_1  & 0 \\
 0 & \beta_1 & \alpha_2 \\
 0 & 0 & \beta_2 \\
\hline
 0 &  \alpha'_1 & 0 \\
 0 &  \beta_1 & \alpha_2 \\
 0 & 0 & \beta_2 \\
\end{array}\right]
\end{array}
\]
\caption{Examples of matrices $\llbracket\gamma,\delta\rrbracket$ and $\llbracket\gamma,\delta\rrbracket^0$. Lines are added to connect to the block forms appearing in \eqref{eq:splitMatrix} or \eqref{eq:alpha1def}.
\label{fig:matrixexamples}}
\end{figure}

Define $\llbracket\gamma,\delta\rrbracket^{0}$ to be the matrix obtained from $\llbracket\gamma,\delta\rrbracket$ by replacing $\beta_0$ with $0$ (if it appears).
We end the section by defining a distinguished collection of functions $\mathcal{D}_n$ which plays an important role throughout the rest of the paper: to a $Q^*$-matrix $\Phi$, associate the function $\rep_{Q^*}(\bd^*)\rightarrow \mathbb{Z}$, $V\mapsto \text{rank } \Phi_V$. If $\Phi$ is equal to $[\gamma,\delta]$ (respectively $\llbracket \gamma, \delta\rrbracket$, $\llbracket \gamma,\delta\rrbracket^{0}$), denote the associated function by $|\gamma, \delta|$ (respectively, $||\gamma,\delta||$, $||\gamma,\delta||^{0}$). 
Define $\mathcal{D}_n$ to be the set
\begin{equation}\label{eq:distCollection}
\mathcal{D}_n := \left\{|\gamma,\delta| \mid \gamma\leq\delta, \gamma\neq \alpha_1  \right\} \cup \left\{||\gamma,\delta || \mid \alpha_1\leq \gamma\leq \delta \right\}
\cup \left\{|| \alpha_1,\delta ||^{0}\mid \delta\geq \alpha_1\right\}.
\end{equation}

\subsection{Specialization to type $D$ quivers.}
Let $Q$ be a type $D$ quiver with dimension vector $\bd$ and $(Q^*,\bd^*)$ as in Lemma \ref{lem:QtoQn}.  
Recall that $V \in \rep_Q(\bd)$ determines $V*1 \in \rep_{Q^*}(\bd^*)$, as defined in Section \ref{sect:bundles}: concretely, $V*1$ is obtained from $V$ by $(V*1)_a = V_a$ for $a \in Q_1$, and $(V*1)_a=1$ for $a \in Q^*_1 \setminus (Q_1\cup \beta_0)$. 

\begin{proposition}\label{prop:quiverranksorbit}
Let $Q$ be a type D quiver and $V, W$ two representations of the same dimension vector $\bd$.  
Then $V \simeq W$ (i.e. $V$ and $W$ lie in the same $GL(\bd)$-orbit) if and only if $f(V*1) = f(W*1)$ for all $f \in \mathcal{D}_n$, where $\mathcal{D}_n$ is the set from \eqref{eq:distCollection}.
\end{proposition}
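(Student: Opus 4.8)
The forward direction is immediate: every $f \in \mathcal{D}_n$ is of the form $V' \mapsto \rank \Phi_{V'}$ for a fixed $Q^*$-matrix $\Phi$, and the rank of $\Phi_{V'}$ only depends on the $GL(\bd^*)$-orbit of $V'$, since conjugating a representation by a base-change element conjugates $\Phi_{V'}$ by the corresponding block-diagonal invertible matrices on each side. Moreover $V \simeq W$ in $\rep_Q(\bd)$ forces $V*1 \simeq W*1$ in $\rep_{Q^*}(\bd^*)$ (apply the same base-change element, fixed to be the identity on the contracted arrows), so $f(V*1) = f(W*1)$ for all $f$.

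For the converse, the plan is to reduce to a known classification of indecomposables. A type $D$ quiver is representation-finite, so by Gabriel/Krull--Schmidt a representation $V$ is determined up to isomorphism by the multiplicities of the finitely many indecomposable representations appearing in its decomposition; equivalently, by its \emph{generic rank data} — the integers $\dim V_z$ and $\rank$ of all compositions of structure maps along paths (in either the quiver or, after passing to $V*1$, its doubled version $Q^*$). The first step is therefore to argue that the functions in $\mathcal{D}_n$, evaluated at $V*1$, are sufficiently many ``generalized rank'' invariants to pin down the dimension vector of every indecomposable summand of $V$ with its multiplicity. The natural way to organize this is: (a) the ordinary path ranks $|\gamma,\delta|$ recover ranks of all zig-zag compositions through the arm of $Q^*$, hence (via the contraction identification of Lemma~\ref{lem:QtoQn}, under which the maps over $A \setminus \beta_0$ are invertible) the ranks of all compositions of structure maps of $Q$ along its linear arm and along the short branch $\delta$; (b) the doubled matrices $\llbracket \gamma,\delta\rrbracket$ and their $\beta_0$-deleted variants $\llbracket\alpha_1,\delta\rrbracket^0$ simultaneously encode the interaction of the two branches at the branch vertex — precisely the data needed to separate indecomposables of type $D$ that have the same restriction to each type $A$ sub-branch but differ in how the branches ``mix'' (this is exactly where type $D$ indecomposables fail to be distinguished by path ranks alone, so this is where the doubling is doing real work).

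Concretely I would proceed as follows. First, fix the list of indecomposable representations of $Q$ (their dimension vectors are the positive roots of $D_{n+2}$) and, for each, compute the vector of values $(f(V*1))_{f \in \mathcal{D}_n}$ when $V$ is that indecomposable; more usefully, set up a linear system expressing, for an arbitrary $V = \bigoplus_i M_i^{\,m_i}$, each $f(V*1)$ as an explicit $\ZZ$-linear (or piecewise-linear, via $\rank$ of a direct sum being the sum of ranks) function of the multiplicities $m_i$ together with generic-rank contributions. Because ranks are additive over direct sums, each $f(V*1)$ is literally $\sum_i m_i \cdot f(M_i*1)$, so the map $V \mapsto (f(V*1))_f$ factors through the multiplicity vector $(m_i)$ via a fixed integer matrix $C$ whose columns are indexed by indecomposables. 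The claim then reduces to: \emph{$C$ has trivial kernel on $\ZZ_{\geq 0}$-vectors}, i.e. distinct non-negative multiplicity vectors give distinct images. The cleanest route is to exhibit, for each indecomposable $M_i$, a single function $f_i \in \mathcal{D}_n$ (or a small $\ZZ$-combination of such) that ``detects'' $M_i$ in the sense of a triangularity/unitriangularity argument with respect to a suitable ordering of the indecomposables (e.g. by dimension of total space, or by the Auslander--Reiten order): $f_i(M_i*1) \neq 0$ while $f_i(M_j*1) = 0$ for $j$ earlier in the order, so the multiplicities can be solved for one at a time. Establishing this unitriangularity — i.e. choosing the right order and the right detecting function for each of the roots of $D_{n+2}$, handling the exceptional summands around the trivalent vertex — is the main obstacle; it is essentially a finite but careful case analysis over the positive roots of $D_{n+2}$, which one can present via the combinatorial description of type $D$ indecomposables (intervals on the two long legs plus the two ``short-leg'' markers) and matching each root to the zig-zag or doubled zig-zag matrix that sees it. Once the detecting functions are in hand, the converse follows: $f(V*1) = f(W*1)$ for all $f \in \mathcal{D}_n$ forces $m_i(V) = m_i(W)$ for all $i$, hence $V \simeq W$ by Krull--Schmidt.

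One bookkeeping point worth isolating as a lemma before the main argument: since we only ever evaluate at points of the form $V*1$, the maps over the contracted arrows $A \setminus \beta_0$ are identities, so the zig-zag matrices $[\gamma,\delta]_{V*1}$ and $\llbracket\gamma,\delta\rrbracket_{V*1}$ simplify — blocks over contracted arrows become identity blocks and can be cleared by row/column operations — reducing each rank computation to a rank computation purely in terms of the structure maps of $Q$ itself (together with the combinatorial positions of the identity blocks). I would record these simplified normal forms first; they make the per-root verification above essentially mechanical.
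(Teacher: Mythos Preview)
Your forward direction is fine and matches the paper's (implicit) treatment.

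For the converse, your approach is sound but takes a different route from the paper. You propose Krull--Schmidt plus additivity of rank under direct sums, reducing to injectivity of the integer matrix $C$ with entries $f(M_i*1)$ (for $f \in \mathcal{D}_n$, $M_i$ indecomposable), to be established by a triangularity argument over the positive roots of $D$.

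The paper instead takes a conceptual shortcut through $\Hom$-spaces. Rather than proving triangularity directly, it interprets each $Q^*$-matrix $\Phi$ underlying some $f \in \mathcal{D}_n$, after specializing the contracted arrows to identities, as a $Q$-matrix $\overline{\Phi}$ representing a morphism $\phi\colon P_1 \to P_2$ between projective $Q$-representations. Then $f(V*1) = \rank \overline{\Phi}_V = \rank \Hom_Q(\phi, V)$, and the exact sequence obtained by applying $\Hom_Q(-,V)$ to $P_1 \to P_2 \to N \to 0$ shows this rank determines $\dim \Hom_Q(N, V)$ up to a constant depending only on $\bd$, where $N = \coker \phi$. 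The paper then checks that every indecomposable $N$ of $Q$ arises as such a cokernel for some $\Phi \in \mathbf{D}_n$, and invokes Auslander's theorem: $V \simeq W$ iff $\dim \Hom_Q(N,V) = \dim \Hom_Q(N,W)$ for all $N$.

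What each approach buys: the paper's route is shorter and entirely sidesteps the per-root case analysis you flag as the main obstacle, by packaging that work into Auslander's theorem (whose own proof is precisely the unitriangularity of $(\dim \Hom_Q(M_i,M_j))$ in an appropriate order). Your route is more elementary in that it does not cite Auslander, but you would essentially be reproving his result in this special case. The $\Hom$-interpretation also explains \emph{why} the doubled matrices $\llbracket\gamma,\delta\rrbracket$ and $\llbracket\alpha_1,\delta\rrbracket^0$ appear at all: they are exactly the shapes of projective presentations of the type $D$ indecomposables that are not intervals on a single branch.

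One minor slip worth fixing: your early parenthetical that a representation is determined by ``generic rank data --- the integers $\dim V_z$ and $\rank$ of all compositions of structure maps along paths'' is false for type $D$, as you yourself note a few lines later; path ranks alone do not separate all indecomposables, which is precisely why the doubled matrices are needed.
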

\begin{proof}
We only sketch the proof here, since it follows from standard computations with projective resolutions of quiver representations, and a more detailed account would require a significant amount of additional \textit{ad hoc} notation used only in this proof.
The key point explained below is that for a given $V$, the values $f(V*1)$ for $f \in \mathcal{D}_n$ 
along with $\bd$
determine the values $\dim \Hom_Q(N,V)$ for all representations $N$ of $Q$.
These values, in turn, determine the isomorphism class of $V$ by a theorem of Auslander \cite{Auslander:1982fk}. That is, two representations $V, W$ of $Q$ satisfy $V \simeq W$ if and only if $\dim \Hom_Q(N,V)=\dim\Hom_Q(N,W)$ for all representations $N$ of $Q$.  
Using linearity of $\Hom$, to prove the proposition it thus suffices to show that for each $N$ indecomposable, $\dim \Hom_Q(N,V)$ can be obtained from the values $f(V*1)$ for $f \in \mathcal{D}_n$ and $\bd$.

Let $P_1 \xto{\phi} P_2$ be a morphism of projective representations of $Q$ such that $\coker \phi \simeq N$.
For any representation $V$ of $Q$, we get an induced exact sequence of vector spaces
\begin{equation}
0 \to \Hom_Q(N,V) \to \Hom_Q(P_2, V) \xto{\Hom_Q(\phi,V)} \Hom_Q(P_1, V).
\end{equation}
If we denote by $P(x)$ the projective cover of the simple representation $S(x)$ concentrated at vertex $x$, then $\dim\Hom_Q(P(x), V)=\bd(x)$.
Thus, the dimensions of $\Hom_Q(P_1, V)$ and $\Hom_Q(P_2, V)$ depend only on $\bd$, and knowing $\dim \Hom_Q(N,V)$ is equivalent to knowing $\rank \Hom_Q(\phi,V)$ and $\bd$.
Choosing a decomposition of $P_1, P_2$ into indecomposables, the morphism $\phi$ is represented by a $Q$-matrix $\overline{\Phi}$, and its evaluation $\overline{\Phi}_V$ represents $\Hom_Q(\phi,V)$.
So it remains to show that, for a given $V$, the values $f(V*1)$ for $f \in \mathcal{D}_n$ and $\bd$ determine
the ranks of the matrices $\overline{\Phi}_V$,
 as $\phi$ varies over a collection of morphisms whose collection of cokernels contains every indecomposable representation of $Q$ (up to isomorphism and projective summands).

Given any $Q^*$-matrix $\Phi$, we get a $Q$-matrix $\overline{\Phi}$ as follows.
First apply the map $\nu$ to the vertices labeling its rows and columns (see Section \ref{sect:contract}), and when $Q$ is oriented as in \eqref{eq:fundorient2}, delete any columns labeled by $x_0$ (this doesn't affect the values of the functions $f(V*1)$ since $V*1$ has dimension 0 at $x_0$ anyways).  Then replace each arrow in this matrix from the set $Q^*_1 \setminus Q_1$ with a 1 to get $\overline{\Phi}$.

Now consider the set of $Q^*$-matrices
\begin{equation}
\mathbf{D}_n:= \left\{[\gamma,\delta] \mid \gamma\leq\delta, \gamma\neq \alpha_1  \right\} \cup \left\{\llbracket\gamma,\delta \rrbracket \mid \alpha_1\leq \gamma\leq \delta \right\} \cup \left\{\llbracket \alpha_1,\delta \rrbracket^{0}\mid \delta\geq \alpha_1\right\},
\end{equation}
that is, the set of $Q^*$-matrices used to obtain the set $\cD_n$.
It is straightforward to check that every isomorphism class of indecomposable representation of $Q$, up to a projective summand, appears as the cokernel of some $\overline{\Phi}$ with $\Phi \in \mathbf{D}_n$.  (Many such specializations will yield decomposable representations, and they are not necessarily distinct; we only claim that all indecomposable representations appear, possibly with additional projective summands, amongst others.) 

Now it follows directly from the definitions that if $f \in \mathcal{D}_n$ and $\Phi \in \mathbf{D}_n$ is the corresponding $Q^*$-matrix, then $f(V*1)=\rank \overline{\Phi}_V$.  So the values $f(V*1)$ for $f \in \mathcal{D}_n$ determine the isomorphism class of $V$ among representations of the same dimension vector.
\end{proof}

\section{A linear slice of $K \backslash G$}\label{sect:slice}
Let $G = GL(a+b)$ and $K = GL(a)\times GL(b)$ embedded as two blocks along the diagonal of $G$.
Let $B_+$, $B_-$, $U_+$, $U_-$, and $T$ 
respectively denote the Borel subgroup of upper triangular matrices in $G$, the Borel subgroup of lower triangular matrices in $G$, the unipotent radical of $B_+$, the unipotent radical of $B_-$, and the maximal torus of diagonal matrices in $G$. 

The goal of this section is to transfer the problems we study for orbit closures of certain parabolic subgroups in $K\backslash G$ to intersections of these orbit closures with a locally closed subvariety, or \emph{slice}, in $K \backslash G$.
This is accomplished by Theorem \ref{thm:intersectS}, which is an analogue of a result on orbit closures in flag varieties stated by Kazhdan and Lusztig \cite[Lem.~A.4]{KL79}.

\begin{remark}
Mars and Springer have a related general result about $B_+$-orbit closures in $K\backslash G$ \cite[\S6.4]{MarsSpringer}. This was recently used by Woo, Wyser, and Yong to study singularities of these orbit closures \cite{WooWyserYong}.  
Our Theorem \ref{thm:intersectS} is not a special case of the Mars-Springer result; our $S$ (see Section \ref{sect:linearSliceDefinition}) is not a Mars-Springer slice, and our result applies only to certain parabolic group orbit closures rather than to all $B_+$-orbit closures. 
Our slice is more natural for our quiver application: it allows for a type $D$ quiver result which parallels the type $A$ result in \cite{KR}, and does not require exclusion of characteristic 2.
\end{remark}

\subsection{A slice in $K\backslash G$}\label{sect:linearSliceDefinition}

Fix positive integers $q,r,s,t$ such that $a = q+r+s$ and $b = t+r+s$ (these will be determined by a quiver dimension vector $\bd$ in the next section). Let $v_0, w$ be the block matrices below and let $B'$ be the subgroup of $GL(a+b)$ consisting of all matrices of the form shown below:

\[
v_0=
\begin{blockarray}{ccccccccccc}
          & s & s & r & q & t & r \\
      \begin{block}{c[cccccccccc]}
q & \cdot & \cdot & \cdot & J & \cdot & \cdot \\
r & \cdot & \cdot & J & \cdot & \cdot & \cdot \\
s & J & \cdot & \cdot & \cdot & \cdot & \cdot \\
t & \cdot & \cdot & \cdot & \cdot & 1 & \cdot \\
r & \cdot & \cdot & J & \cdot & \cdot & 1 \\
s & J & 1 & \cdot & \cdot & \cdot & \cdot \\
      \end{block}
    \end{blockarray},
    \enskip
  w  =
\begin{blockarray}{ccccccccccc}
        & s & s &r & q & t & r   \\
      \begin{block}{c[cccccccccc]}
s&\cdot & J & \cdot & \cdot &\cdot &\cdot \\
r&\cdot & \cdot & \cdot & \cdot &\cdot  & J \\
q&\cdot & \cdot & \cdot & J  & \cdot & \cdot \\
t&\cdot & \cdot & \cdot  & \cdot & 1 & \cdot \\
r&\cdot & \cdot & J & \cdot  &\cdot & \cdot \\
s&J & \cdot & \cdot & \cdot  &\cdot & \cdot \\
\end{block}
\end{blockarray},
\enskip
B': \begin{blockarray}{ccccccccccc}
          & s & r & q & t & r & s \\
      \begin{block}{c[cccccccccc]}
s & \ell_1 & \cdot & \cdot & \cdot & \cdot & \cdot \\
r & * & \ell_2 & \cdot & \cdot & \cdot & \cdot \\
q & * & * & 1 & \cdot & \cdot & \cdot \\
t & * & * & \cdot & 1 & \cdot & \cdot \\
r & * & * & * &* & u_1 & \cdot \\
s & * & * & * & * & * & u_2 \\
      \end{block} 
    \end{blockarray}
\]
The labels at the top and left of the matrices indicate sizes of block rows and block columns. Dots denote blocks of zeros. Each $1$ is an identity matrix, $J$ is a square matrix with ones along the antidiagonal and zeros elsewhere, each asterisk is an arbitrary matrix, $\ell_i$ is invertible lower triangular, and $u_i$ is unipotent lower triangular.

Let $U'$ denote the unipotent radical of $B'$ and let $\overline{v}_0:=Kv_0 \in K\backslash G$. We define the \emph{slice} $S$ to be the orbit of $\overline{v}_0$ under the action of the group $w^{-1} U' w\cap U_-$ by right multiplication:
\begin{equation}
S:= \overline{v}_0 (w^{-1} U' w\cap U_-).
\end{equation}

We will typically represent the slice by an isomorphic space of matrices in $G$. To this end, let $\tilde{S}\subseteq G$ be the space of matrices of the form
\begin{equation}\label{eq:v01}
\tilde{S}:\qquad\begin{blockarray}{ccccccccccc}
          & s & s & r & q & t & r \\
      \begin{block}{c[cccccccccc]}
q & \cdot & c & \cdot & J & \cdot & \cdot \\
r & \cdot & d & J & \cdot & \cdot & \cdot \\
s & J & \cdot & \cdot & \cdot & \cdot & \cdot \\
t & \cdot & e & \cdot & \cdot & 1 & \cdot \\
r & \cdot & f & J & \cdot & \cdot & 1 \\
s & J & 1 & \cdot & \cdot & \cdot & \cdot \\
      \end{block}
    \end{blockarray}
\end{equation}
where $c,d,e,f$ are arbitrary matrices over $k$ of indicated sizes.

\begin{lemma}\label{lem:sliceIso}
The following map is an isomorphism:
\begin{equation}\label{eq:Bprimeconjaction}
w^{-1}U'w\cap U_- \to \tilde{S}, \qquad u  \mapsto v_0 u .
\end{equation}
\end{lemma}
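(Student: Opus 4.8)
The plan is to prove Lemma \ref{lem:sliceIso} by direct computation: take an arbitrary element $u \in w^{-1}U'w \cap U_-$, compute the matrix product $v_0 u$ explicitly in block form, and check that the result is exactly a matrix of the shape described in \eqref{eq:v01}, with the parameters $c,d,e,f$ determined bijectively by $u$. Since both $w^{-1}U'w \cap U_-$ and $\tilde{S}$ are affine varieties (the former is a unipotent subgroup of $U_-$, hence isomorphic to an affine space; the latter is manifestly an affine space with coordinates given by the free blocks $c,d,e,f$), it suffices to exhibit an explicit regular inverse map, or equivalently to show the map is a bijective morphism between two affine spaces of the same dimension whose inverse is also regular. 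Left multiplication by the fixed invertible matrix $v_0$ is certainly a morphism, so the content is the bookkeeping.

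The first step is to identify $w^{-1}U'w \cap U_-$ concretely as a space of matrices. Conjugating $U'$ by $w^{-1}$ permutes the block rows and columns of the strictly-lower-triangular-in-block-structure part of $U'$ according to the permutation matrix $w$ (which sends the block column order $(s,r,q,t,r,s)$ of $B'$ to the block row order $(s,r,q,t,r,s)$ of $v_0$, reading off from the definition of $w$). Intersecting with $U_-$ (genuinely lower triangular, unipotent) cuts this down to those strictly-lower entries of $w^{-1}U'w$ that survive in the lower-triangular part with $1$'s on the diagonal. I would record this intersection as a matrix $u$ with identity diagonal blocks and certain free blocks (precisely four independent blocks, matching $c,d,e,f$), zeros elsewhere. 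Then I would multiply out $v_0 u$ block-by-block: because $v_0$ consists of $J$'s, $1$'s, and zero blocks, each block of the product is either zero, a $J$ (or $1$) coming from the corresponding block of $v_0$, or $J$ (resp.\ $1$) times one of the free blocks of $u$, landing exactly in the positions where $c,d,e,f$ appear in \eqref{eq:v01}. This gives both that the image lies in $\tilde S$ and a formula for $(c,d,e,f)$ in terms of the free blocks of $u$, which visibly can be inverted (multiply back by $J^{-1}=J$).

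The main obstacle, and the only real work, is getting the block structure of $w^{-1}U'w \cap U_-$ right: one must carefully track how conjugation by $w$ rearranges the block rows/columns of $U'$ and then determine which of the resulting off-diagonal blocks are forced to vanish upon intersecting with $U_-$ (the block-structure of $U'$ has entries both above and below the diagonal in the fine grading because the blocks $\ell_1$ and $u_i$ are themselves triangular, so the intersection with the honestly-lower-triangular $U_-$ is a slightly delicate condition at the level of individual matrix entries, not just blocks). Once that bookkeeping is done correctly — and the definitions of $v_0$, $w$, $B'$ have evidently been engineered so that it works out cleanly — the verification that $u \mapsto v_0 u$ is an isomorphism onto $\tilde S$ is immediate, with inverse sending a matrix of the form \eqref{eq:v01} to $v_0^{-1}$ times it. I would present the computation by displaying $u$ in block form, displaying the product $v_0 u$, and remarking that comparison with \eqref{eq:v01} gives mutually inverse regular maps.
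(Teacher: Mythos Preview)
Your proposal is correct and follows essentially the same approach as the paper: explicitly describe $w^{-1}U'w\cap U_-$ as an affine space of block-lower-unipotent matrices with four free blocks, compute $v_0 u$ block-by-block, and identify the result with $\tilde S$ via a linear change of coordinates in the free blocks. The paper carries this out by displaying the two matrices side by side and noting that the free blocks $(c',d',e',f')$ of $u$ appear in $v_0 u$ as $(Jd', Jc', e', f'+Jc')$, which is visibly a bijective linear change to $(c,d,e,f)$.
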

\begin{proof}

Consider the spaces of matrices of the forms
\begin{equation}\label{eq:matrixComps}
  \begin{blockarray}{ccccccccccc}
          & s & s & r & q &t & r \\
      \begin{block}{c[cccccccccc]}
s & 1 & \cdot & \cdot & \cdot & \cdot & \cdot \\
s & \cdot & 1 & \cdot & \cdot & \cdot & \cdot \\
r & \cdot & c' & 1 & \cdot & \cdot & \cdot \\
q & \cdot & d' & \cdot & 1 & \cdot & \cdot \\
t & \cdot & e' & \cdot & \cdot &1 & \cdot \\
r & \cdot & f' & \cdot & \cdot & \cdot&1 \\
      \end{block}
    \end{blockarray},
\quad \text{and} \quad
   \begin{blockarray}{ccccccccccc}
          & s & s & r & q &t & r \\
      \begin{block}{c[cccccccccc]}
q & \cdot & Jd' & \cdot & J & \cdot & \cdot \\
r & \cdot & Jc' & J & \cdot & \cdot & \cdot \\
s & J & \cdot & \cdot & \cdot & \cdot & \cdot \\
t & \cdot &e'& \cdot& \cdot &1&\cdot \\
r & \cdot &f'+Jc' & J & \cdot & \cdot & 1 \\
s & J & 1 & \cdot & \cdot & \cdot &\cdot \\
      \end{block}
    \end{blockarray}
\end{equation}
where $c',d',e',f'$ are arbitrary matrices of indicated sizes.
One easily checks that $w^{-1}U'w\cap U_-$ is the affine space of matrices on the left  
in \eqref{eq:matrixComps} and furthermore, if $u$ is a matrix on the left then $v_0u$ is the corresponding matrix on the right. Then a linear change of variables identifies the space of matrices of the form on the right with the space of matrices in \eqref{eq:v01}.
\end{proof}

\begin{proposition}\label{prop:slicesIsom}
The projection map $\pi: \tilde{S}\rightarrow S$, $g\mapsto Kg$, is an isomorphism of varieties.
\end{proposition}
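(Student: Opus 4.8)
The plan is to show that the composite of the two isomorphisms established so far — the parametrization of $\tilde S$ in Lemma \ref{lem:sliceIso} and the definition of $S$ as an orbit — forces $\pi$ to be an isomorphism, by exhibiting an explicit inverse. Concretely, $S = \overline{v}_0(w^{-1}U'w \cap U_-)$ is by definition the image of $w^{-1}U'w \cap U_-$ under $u \mapsto Kv_0u$, and Lemma \ref{lem:sliceIso} says $u \mapsto v_0u$ is an isomorphism $w^{-1}U'w \cap U_- \xrightarrow{\sim} \tilde S$. So $\pi\colon \tilde S \to S$, being the map $v_0u \mapsto Kv_0u$, is exactly the composite of $(\text{Lemma \ref{lem:sliceIso}})^{-1}$ with the orbit map, hence it is surjective. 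The only content left is injectivity, equivalently: if $g, g' \in \tilde S$ with $Kg = Kg'$, then $g = g'$. This amounts to checking that $K \cap (\tilde S \tilde S^{-1}) = \{1\}$, i.e.\ that no nontrivial element of $K$ carries one matrix of the form \eqref{eq:v01} to another.

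First I would set up the injectivity check directly. Suppose $k \in K$ and $kg = g'$ with $g, g'$ both of the form in \eqref{eq:v01}, with parameter matrices $(c,d,e,f)$ and $(c',d',e',f')$ respectively. Write $k = \diag(k_1, k_2)$ with $k_1 \in GL(a)$, $k_2 \in GL(b)$, where $a = q+r+s$ and $b = t+r+s$, using the block row sizes $(q,r,s \mid t,r,s)$ of \eqref{eq:v01}. Now I would read off the equation $kg = g'$ column block by column block. The key observation is that the columns of \eqref{eq:v01} labelled by the block column sizes $s, r, q, t, r$ (all but the second $s$-block) together contain, across the two $K$-blocks, an identity-like pattern of $J$'s and $1$'s with no free parameters: the block rows $s$ (third), $s$ (sixth) see the first $s$-column; the block row $r$ (second) and $r$ (fifth) see the $r$-column; the block row $q$ sees the $q$-column; the block rows $t$ and $r$ (fifth) see the $t$- and last $r$-columns. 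Matching these parameter-free blocks through $k = \diag(k_1,k_2)$ pins down each diagonal block of $k_1$ and $k_2$ to be $1$ (after accounting for the $J$'s, which are invertible involutions): e.g.\ the $q$-block column forces the $(q,q)$-block of $k_1$ to be $J \cdot J^{-1} = 1$, and similarly for the other blocks, while the strictly-lower-triangular structure of $k$ relative to these block positions — read off from the zero pattern — forces the off-diagonal blocks of $k_1$ and $k_2$ to vanish one at a time. Hence $k = 1$, and then $g = g'$, which is injectivity.

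The remaining point is that $\pi$ is not merely a bijective morphism but an isomorphism of varieties. Here I would use that both $\tilde S$ and $S$ are smooth: $\tilde S$ is visibly an affine space in the free parameters $c,d,e,f$, and $S$ is an orbit of a unipotent group (hence smooth, being a homogeneous space, and in fact isomorphic as a variety to $w^{-1}U'w \cap U_-$, itself an affine space by Lemma \ref{lem:sliceIso}). A bijective morphism between smooth varieties over an algebraically closed field need not be an isomorphism in general, so instead I would exhibit the inverse explicitly: $S \to \tilde S$ is $\overline{v}_0 u \mapsto v_0 u$, which is well-defined precisely because of the injectivity just proved (the coset $\overline{v}_0 u$ determines $v_0 u$ uniquely within $\tilde S$), and it is a morphism because $v_0 u$ depends algebraically on $u$ and $u$ can be recovered algebraically from $\overline{v}_0 u$ via the coordinates of $S \cong w^{-1}U'w \cap U_-$. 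Composing, $\pi$ and this map are mutually inverse morphisms.

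The main obstacle I anticipate is purely bookkeeping: carefully tracking the six block rows and six block columns of \eqref{eq:v01} and verifying that the parameter-free blocks really do rigidify $k \in K$ to the identity. The conceptual content is minimal — it is the standard fact that an orbit map $U' \to \overline{v}_0 U'$ of a unipotent group on a quotient is an isomorphism onto its image once the stabilizer is trivial — but one must confirm that $v_0$ was chosen so that $\Stab_K(\overline{v}_0)$ meets $w^{-1}U'w \cap U_-$-translates trivially, i.e.\ that the specific antidiagonal/identity pattern in $v_0$ leaves no room for a nontrivial block-diagonal $K$-element. I would organize this as a short lemma-style computation matching blocks in a fixed order (the $s$-columns first to kill the $(s,s)$ blocks of $k_1, k_2$, then the $r$-, $q$-, $t$-columns, then the off-diagonal blocks in increasing "distance from the diagonal"), and present only the skeleton of that matching rather than every entry.
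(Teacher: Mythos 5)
Your reduction of the problem to injectivity plus regularity of the inverse is sound, and the block computation you sketch for injectivity does work (matching the parameter-free columns of \eqref{eq:v01} forces $k=\diag(k_1,k_2)=1$, exactly as you predict). The genuine gap is in the last step, where bijectivity is upgraded to an isomorphism of varieties. The ``standard fact'' you invoke --- that a unipotent orbit map with trivial stabilizer is an isomorphism onto the orbit --- is only valid when the \emph{scheme-theoretic} stabilizer is trivial (equivalently, when the orbit map is separable); with only set-theoretic triviality it fails in positive characteristic (e.g.\ $\mathbb{G}_a$ acting on $\mathbb{A}^1$ by $t\cdot x = x+t^p$ has trivial set-theoretic stabilizers but the orbit map is Frobenius). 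Since this paper insists on arbitrary characteristic, that appeal cannot carry the argument. Your fallback, that the inverse $\overline{v}_0u\mapsto v_0u$ is a morphism ``because $u$ can be recovered algebraically from $\overline{v}_0u$ via the coordinates of $S\cong w^{-1}U'w\cap U_-$,'' is circular: $S$ is defined only as a subset of $K\backslash G$, and the assertion that $u$ (equivalently the parameters $c,d,e,f$) is a regular function of the coset is precisely the content of the proposition, not something available in advance.

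The paper closes exactly this gap by a Grassmannian chart argument rather than a stabilizer argument. It enlarges $\tilde S$ to the linear space $V$ of \eqref{eq:V} and observes that, splitting $V=V_1\times V_2$ into the top $a$ and bottom $b$ rows, the map $g\mapsto Kg$ is the standard parametrization of a product of affine cells in $\Gr(a,a+b)\times\Gr(b,a+b)$: the fixed $J$/identity pattern singles out pivot columns, each pair of row spans has a unique representative normalized on those columns, and the remaining entries of that representative are regular functions on the open cell. Hence $V\to K\backslash V$ is an isomorphism, and since Lemma \ref{lem:sliceIso} gives $\tilde S\subseteq V$, the restriction $\tilde S\to S$ is an isomorphism onto its image. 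Note that this route also makes your injectivity computation unnecessary (it comes for free from the chart), which is why the paper's proof contains no stabilizer-type matrix check. To salvage your approach you would need to either carry out this chart/normalization argument (producing the inverse by explicit regular formulas on an open subset of $K\backslash G$ containing $S$) or verify separability of the orbit map directly, e.g.\ by an injectivity check on differentials.
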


\begin{proof}
Let $V$ denote the space of matrices of the following form:
\begin{equation}\label{eq:V}
V:~~\begin{blockarray}{ccccccccccc}
          & s & s & r & q & t & r \\
      \begin{block}{c[cccccccccc]}
q & \cdot & * & \cdot & J & * & * \\
r & \cdot & * & J & \cdot & * & * \\
s & J & * & \cdot & \cdot & * & * \\
\cline{1-7}
t & \cdot & * & * & * & 1 & \cdot \\
r & \cdot & * & * & * & \cdot & 1 \\
s & J & * & * & * & \cdot & \cdot \\
      \end{block}
    \end{blockarray}\subseteq \text{Mat}(a+b,a+b)
\end{equation}
Observe that the map $\pi: V\rightarrow K\backslash V$, $v\mapsto Kv$ is an isomorphism. Indeed, identify $V$ with a product of matrix spaces $V_1\times V_2$, where $V_1\subseteq \text{Mat}(a,a+b)$ is the space of matrices above the dividing line of \eqref{eq:V} and $V_2\subseteq  \text{Mat}(b,a+b)$ is the space of matrices below the dividing line. Then observe that the map $\pi$ is the same as the isomorphism $V_1\times V_2\rightarrow GL(a)\backslash V_1\times GL(b)\backslash V_2\subseteq \Gr(a,a+b)\times \Gr(b,a+b)$ which identifies $V_1\times V_2$ with a product of standard open sets in the product of Grassmannians $\Gr(a,a+b)\times \Gr(b,a+b)$. 
By Lemma \ref{lem:sliceIso}, we have that $\tilde{S} = v_0(w^{-1}U'w\cap U_-)\subseteq V$. Thus, $\tilde{S}$ maps isomorphically onto $S$ by the restriction of $\pi$ to $\tilde{S}$.
\end{proof}

Finally, let $T_S\leq G$ be the set of matrices $t$ of the form below, where $t_1, \dotsc, t_4$ are invertible diagonal matrices, and consider its action on $S$ by right multiplication.  For $M \in \tilde{S}$ as in \eqref{eq:v01} and $t \in T_S$ as below, a representative of $M \cdot t$ in $\tilde{S}$ is given by
\begin{equation}\label{eq:TSdef}
t= \begin{blockarray}{ccccccccccc}
          & s & s & r & q &t & r \\
      \begin{block}{c[cccccccccc]}
s & Jt_1J & \cdot & \cdot & \cdot & \cdot & \cdot \\
s & \cdot & t_1 & \cdot & \cdot & \cdot & \cdot \\
r & \cdot & \cdot & Jt_2J & \cdot &\cdot & \cdot \\
q & \cdot & \cdot & \cdot & J t_3 J & \cdot & \cdot \\
t & \cdot & \cdot & \cdot & \cdot &t_4 & \cdot \\
r & \cdot & \cdot & \cdot & \cdot & \cdot &t_2 \\
      \end{block}
    \end{blockarray}
    \qquad \qquad
M \cdot t = 
\begin{blockarray}{ccccccccccc}
          & s & s & r & q & t & r \\
      \begin{block}{c[cccccccccc]}
q & \cdot & t_3^{-1} ct_1 & \cdot & J & \cdot & \cdot \\
r & \cdot & t^{-1}_2 d t_1 & J & \cdot & \cdot & \cdot \\
s & J & \cdot & \cdot & \cdot & \cdot & \cdot \\
t & \cdot & t_4^{-1}et_1 & \cdot & \cdot & 1 & \cdot \\
r & \cdot & t^{-1}_2 f t_1 & J & \cdot & \cdot & 1 \\
s & J & 1 & \cdot & \cdot & \cdot & \cdot \\
      \end{block}
    \end{blockarray}.
\end{equation}

\subsection{Proof of Theorem \ref{thm:intersectS}}\label{sect:linearSlice}
To begin, we identify an open subvariety of $K\backslash G$ with a product of subgroups of $G$. 
We retain the notation of the previous section, 
but coarsen the $6\times 6$ block matrix structure to a $4\times 4$ block matrix structure whenever possible. Indeed, let $p:=s+r$. Then matrices in $B'$ take the form indicated below. Furthermore, 
let $o = v_0w^{-1}$, and let $Q$ be the subgroup of $GL(a+b)\times GL(a+b)$ consisting of pairs of the form indicated below: 
\begin{equation}\label{eq:subgroup1}
o=
\begin{blockarray}{cccccccc}
           &p& q & t& p \\
      \begin{block}{c[ccccccc]}
q & \cdot & 1 & \cdot & \cdot  \\
p & \cdot & \cdot & \cdot & 1 \\
t & \cdot & \cdot & 1  & \cdot \\
p & J & \cdot & \cdot & 1 \\
      \end{block}
    \end{blockarray},
        \quad
    B': ~~ \begin{blockarray}{cccccccc}
           &p& q & t& p \\
      \begin{block}{c[ccccccc]}
p & \ell & \cdot & \cdot & \cdot  \\
q & * & 1 & \cdot & \cdot \\
t & * & \cdot & 1  & \cdot \\
p & * & * & * & u \\
      \end{block}
    \end{blockarray},
    \quad
    Q: ~~\left( \begin{blockarray}{cccccccc}
           &p& q & t& p \\
      \begin{block}{c[ccccccc]}
p & 1 & \cdot & \cdot & \cdot  \\
q & \cdot & 1 & y & \cdot \\
t & \cdot & \cdot & 1  & \cdot \\
p & \cdot & \cdot & \cdot & 1 \\
      \end{block}
    \end{blockarray},
     \begin{blockarray}{cccccccc}
           &p& q & t& p \\
      \begin{block}{c[ccccccc]}
p & 1 & \cdot & \cdot & \cdot  \\
q & \cdot & 1 & \cdot & \cdot \\
t & \cdot & z & 1  & \cdot \\
p & \cdot & \cdot & \cdot & 1 \\
      \end{block}
    \end{blockarray}\right).
\end{equation}
The asterisks, $ y, $ and $z$ are arbitrary matrices, $\ell$ is an invertible lower triangular matrix, and $u$ is a lower triangular matrix with $1$s along the diagonal. Write $Q = Q_1\times Q_2$ so that elements in $Q_1$ are upper triangular and elements in $Q_2$ are lower triangular. 

 \begin{proposition}\label{lem:openImmersion1}
The following morphism is an open immersion.
\[
\mu: oB'o^{-1} \times oQo^{-1} \to K\backslash G,\qquad (obo^{-1}, (oq_1o^{-1}, oq_2o^{-1})) \mapsto K o bq_1q_2 o^{-1}
\]
 \end{proposition}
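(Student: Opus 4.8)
The plan is to exhibit $\mu$ as a composition of more elementary morphisms, each of which is visibly an open immersion, and then assemble. First I would note that the map $G \to K\backslash G$, $g \mapsto Kg$, restricts to an open immersion on the affine space of matrices in $G$ whose leading principal block (in the coarsened $4\times 4$ structure, the first $p$ rows) spans a fixed subspace complementary to $k^a\times 0$; this is exactly the Grassmannian-chart observation already used in the proof of Proposition \ref{prop:slicesIsom}. Concretely, conjugation by $o$ translates the standard Bruhat-cell open condition on $K\backslash G$ into the statement that $g \mapsto Kgo^{-1}$ is an open immersion on a suitable big cell, so it suffices to show that $(b, q_1, q_2) \mapsto bq_1q_2$ is an isomorphism from $B' \times Q_1 \times Q_2$ onto a locally closed (in fact open in its closure) subvariety of $G$ mapping into that big cell, and then transport along $o(-)o^{-1}$.

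The key step is therefore to analyze the multiplication map $B' \times Q_1 \times Q_2 \to G$. Reading off the block shapes in \eqref{eq:subgroup1}: $Q_2$ is a lower-triangular unipotent one-parameter block (entry $z$ in block position $(t,q)$), $Q_1$ is an upper-triangular unipotent block (entry $y$ in position $(q,t)$), and $B'$ is a lower-triangular group with invertible lower-triangular $\ell$ in the $(p,p)$ corner, identity blocks in the $(q,q)$ and $(t,t)$ positions, a lower-triangular unipotent $u$ in the bottom-right $(p,p)$ block, and arbitrary entries below the block diagonal. I would check that $B' Q_1 Q_2$ is precisely the set of matrices in $G$ whose $(p,p)$ leading block is invertible lower triangular, whose $(q,q)$ and $(t,t)$ diagonal blocks are the identity after clearing, etc.—equivalently, that the product decomposition is \emph{unique}: given $m \in B'Q_1Q_2$, the factors $b, q_1, q_2$ are recovered by successive block elimination (first strip off $Q_2$ using the $(t,q)$ block, then $Q_1$ using the $(q,t)$ block, leaving an element of $B'$). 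This uniqueness, together with the fact that the product morphism is a morphism of varieties with polynomial inverse (all the elimination steps are polynomial since the relevant pivot blocks are identities or invertible lower-triangular), shows $B'\times Q_1\times Q_2 \to B'Q_1Q_2$ is an isomorphism of varieties, and a dimension count identifies $B'Q_1Q_2$ as an open subvariety of the big cell.

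The main obstacle I anticipate is \emph{bookkeeping the block interactions} so as to confirm both that the multiplication stays inside the relevant big cell of $K\backslash G$ (i.e.\ the $(p,p)$-corner pivot condition is preserved, which is what makes $Kg o^{-1}\mapsto \cdot$ an immersion rather than just a morphism) and that no factors collide—that is, $B'\cap Q_1Q_2 = \{1\}$ and the uniqueness of the decomposition genuinely holds. Once that is pinned down, openness of $\mu$ follows because an injective morphism between smooth varieties of equal dimension whose image is contained in an open cell, and which admits a polynomial section onto its image, is an open immersion (using that $K\backslash G$ is smooth and that the image is constructible of full dimension inside the open cell). I would finish by a dimension check: $\dim(oB'o^{-1}) + \dim(oQo^{-1}) = \dim B' + \dim Q_1 + \dim Q_2$, and comparing with $\dim K\backslash G = (a+b)^2 - a^2 - b^2 = 2ab$, using $a = q+r+s$, $b = t+r+s$, $p = r+s$, to see the two sides agree, so $\mu$ is a dominant open immersion onto its image.
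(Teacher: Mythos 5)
Your plan reduces injectivity of $\mu$ to uniqueness of the factorization $bq_1q_2$ inside $G$, but $\mu$ lands in $K\backslash G$, and two distinct products $obq_1q_2o^{-1}$ and $ob'q'_1q'_2o^{-1}$ could a priori lie in the same left $K$-coset; that is the whole difficulty. Your attempt to dispose of it by saying the image lies in a ``big cell'' on which $g\mapsto Kgo^{-1}$ is an open immersion does not work as stated: no open subset of $G$ maps injectively to $K\backslash G$ (every fiber is a $K$-coset of dimension $a^2+b^2$), and the locus that genuinely maps isomorphically onto an open chart --- the affine space $V$ of \eqref{eq:V} used in the proof of Proposition \ref{prop:slicesIsom} --- does not contain $oB'Q_1Q_2o^{-1}$ (for instance $oB'o^{-1}$ carries arbitrary invertible lower-triangular blocks $\ell$ where $V$ has fixed identity and $J$ blocks). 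To exploit the chart you would have to left-multiply each $obq_1q_2o^{-1}$ by the unique element of $K$ bringing it into the normal form $V$ and then prove that $(b,q_1,q_2)$ is recoverable from that normal form; uniqueness of the product decomposition in $G$, together with $B'\cap Q_1Q_2=\{1\}$, gives no information about this, since the collision you must exclude happens across $K$-cosets, not inside $B'Q_1Q_2$. This is precisely the content of the paper's injectivity step, which establishes $oB'Q_1Q_2Q_1B'o^{-1}\cap K=\{1\}$ by an explicit (Macaulay2-assisted) block-matrix computation; in your write-up that step is missing rather than replaced by an equivalent computation.

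A secondary gap is the final openness argument. The assertion that an injective morphism between smooth varieties of equal dimension with image in an open cell is an open immersion is not valid in positive characteristic (Frobenius is a counterexample), and the paper works over a field of arbitrary characteristic; the ``polynomial section'' you invoke would repair this only if it were constructed on the image in $K\backslash G$, which again requires the normal-form computation above, not merely an inverse to the multiplication map $B'\times Q_1\times Q_2\to B'Q_1Q_2$ inside $G$. The paper avoids this by proving $\mu$ is \'etale directly: the differential at $(1,1,1)$ is just addition of tangent vectors into $T_1(K)\backslash T_1(G)$, its kernel is checked to be zero, and right translations transport the statement to every point; \'etale plus injective on points then gives the open immersion. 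If you do carry out the explicit reduction to the chart $V$ and exhibit the inverse there, your route becomes a legitimate alternative (and handles characteristic $p$ automatically), but as written both the injectivity reduction and the openness step are unjustified at exactly the points where the real work lies.
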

 \begin{proof}
It suffices to show that $\mu$ is \'{e}tale and universally injective \cite[Lemma 34.22.2]{stacks-project}, which, because our base field is algebraically closed, is equivalent to showing that $\mu$ is \'{e}tale and injective on points.
We start with injectivity.  
Suppose $Kobq_1q_2o^{-1} = Kob'q_1'q_2'o^{-1}$ for some $(b,(q_1,q_2)), (b',(q_1',q_2'))\in B'\times Q$. 
Then $obq_1q_2q_2'^{-1}q_1'^{-1} b'^{-1}o^{-1} \in K$. 
We also have that this element is in $oB'Q_1Q_2Q_1B'o^{-1}$. 
We will show that $ oB'Q_1Q_2Q_1B'o^{-1} \cap K=1$. This implies injectivity of $\mu$: if we have $obq_1q_2q_2'^{-1}q_1'^{-1} b'^{-1}o^{-1} =1$ then $q_1q_2q_2'^{-1}q_1'^{-1} = b^{-1}b'$, and so $b = b'$ and $q_1q_2q_2'^{-1}q_1'^{-1}=1$ since $B'\cap (Q_1Q_2Q_1) = 1$. Then, because $Q_1\cap Q_2 = 1$, the equality $q_1q_2q_2'^{-1}q_1'^{-1}=1$ implies that $q_1 = q_1'$ and $q_2=  q_2'$.

  So, consider an element $obq_1q_2q_1'b'o^{-1}\in oB'Q_1Q_2Q_1B'o^{-1}$. With the help of Macaulay2 \cite{M2}, one can perform a direct matrix computation to observe that if this element is in $K$, then $q_2=1$. 
  Thus, $oB'Q_1Q_2Q_1B'o^{-1}\cap K  = oB'Q_1B'o^{-1}\cap K$. 
A similar computation shows $oB'Q_1B'o^{-1}\cap K = oB'o^{-1}\cap K$, and one more matrix computation shows that $oB'o^{-1}\cap K = 1$. In the last computation, it is important to remember that $\ell$ and $u$ (as in \eqref{eq:subgroup1}) are lower triangular and unipotent lower triangular respectively, and to note that $J\ell J$ is the $180^\circ$-rotation of $\ell$. 
Thus, $ oB'Q_1Q_2Q_1B'o^{-1} \cap K=1$ and so $\mu$ is injective.

We next show that $\im(\mu)$ is \'{e}tale by showing that it is smooth of relative dimension $0$. 
We note that $oB'o^{-1}\times oQ_1o^{-1}\times oQ_2o^{-1}$ and $K\backslash G$ are smooth varieties of the same dimension, so it suffices to show that $\mu$ is submersive.
To begin, note that an arbitrary element of $T_{(1,1,1)}(oB'o^{-1}\times oQ_1o^{-1}\times oQ_2o^{-1})\cong T_1(oB'o^{-1})\times T_1(oQ_1o^{-1})\times T_1(oQ_2o^{-1})$ has the form
\begin{equation}\label{eq:tangent}
\left( \begin{blockarray}{cccccccc}
           &q& p & t& p \\
      \begin{block}{c[cc|ccccc]}
q & \cdot & -X_1J & \cdot & X_1J  \\
p & X_5 & -X_4J+U & X_6 & X_4J \\
\cline{1-7}
t & \cdot & -X_2J & \cdot  & X_2J \\
p & X_5 & -JLJ-X_4J+U & X_6 & JLJ+X_4J \\
      \end{block}
    \end{blockarray} ,
\begin{blockarray}{cccccccc}
           &q& p & t& p \\
      \begin{block}{c[cc|ccccc]}
q & \cdot & \cdot & Y & \cdot  \\
p & \cdot & \cdot & \cdot & \cdot \\
\cline{1-7}
t & \cdot & \cdot & \cdot  & \cdot \\
p & \cdot & \cdot & \cdot & \cdot \\
      \end{block}
    \end{blockarray},
 \begin{blockarray}{cccccccc}
           &q& p & t& p \\
      \begin{block}{c[cc|ccccc]}
q & \cdot & \cdot & \cdot & \cdot  \\
p & \cdot & \cdot & \cdot & \cdot \\
\cline{1-7}
t & Z & \cdot & \cdot  & \cdot \\
p & \cdot & \cdot & \cdot & \cdot \\
      \end{block}
    \end{blockarray}
\right)
\end{equation}
where $X_i, Y, Z$ are arbitrary matrices of the indicated sizes, $U$ is a strictly lower triangular matrix, and $L$ is a lower triangular matrix.

Let $m: G\times G\times G\rightarrow G$ denote the multiplication map $(g_1,g_2,g_3)\mapsto g_1g_2g_3$. 
This induces a map on the tangent spaces at the identity $dm_{(1,1,1)}:T_{(1,1,1)}(G\times G\times G)\rightarrow T_{1}G$ given by addition (i.e. $(\alpha,\beta,\gamma) \mapsto \alpha+\beta+\gamma$). 
Let $i: oB'o^{-1}\times oQ_1o^{-1}\times oQ_2o^{-1}\rightarrow G\times G\times G$ be inclusion and let $\pi: G\rightarrow K\backslash G$ be the map which sends an element $g$ to its coset in $K\backslash G$. Then $\mu = \pi\circ m\circ i$, and so the map $d\mu_{(1,1,1)}: T_{(1,1,1)}(oB'o^{-1}\times oQ_1o^{-1}\times oQ_2o^{-1})\rightarrow T_{K}(K\backslash G)\cong T_1(K)\backslash T_1(G)$ is also given by addition. Thus, the kernel of $d\mu_{(1,1,1)}$ consists of those triples appearing in \eqref{eq:tangent} which sum to an element in $T_1(K)$. This only happens when $X_i, Y, Z, U, L$ are all zero. Thus, the kernel of $d\mu_{(1,1,1)}$ is trivial and $d\mu_{(1,1,1)}$ is injective. Since the dimensions of the domain and codomain of $d\mu_{(1,1,1)}$ agree, $d\mu_{(1,1,1)}$ is an isomorphism. 
Finally, fix $(b,q_1,q_2)\in B'\times Q_1\times Q_2$ and observe that $\mu$ can be factored as 
\begin{equation}
oB'o^{-1}\times oQ_1o^{-1}\times oQ_2o^{-1}\xrightarrow{t_1} oB'o^{-1}\times oQ_1o^{-1}\times oQ_2o^{-1}\xrightarrow{\mu} K\backslash G \xrightarrow{t_2} K\backslash G
\end{equation}
where $t_1$ and $t_2$ are right translations by $(ob^{-1}o^{-1},oq_1^{-1}o^{-1},oq_2^{-1}o^{-1})\in oB'o^{-1}\times oQ_1o^{-1}\times oQ_2o^{-1}$ and $obq_1q_2o^{-1}$ respectively. Then,  
\begin{equation}
(obo^{-1},oq_1o^{-1},oq_2o^{-1})\xmapsto{t_1} (1,1,1) \xmapsto{\mu} K1 \xmapsto{t_2} Kobq_1q_2o^{-1}
\end{equation}
and so $d\mu_{(obo^{-1},oq_1o^{-1},oq_2o^{-1})}$ is an isomorphism since $d\mu_{(1,1,1)}$ is an isomorphism as are the tangent maps induced by  the right translations. 
This completes the proof. 
\end{proof}

\begin{remark}\label{rmk:GBvsGK}
In the classical $G/B$ situation, the action of the opposite Borel restricts to a $T$-equivariant isomorphism from its unipotent radical to an open orbit in $G/B$.  This greatly simplifies the analogue of our Theorem \ref{thm:intersectS} in that situation.
In our setup, we were able to find a subgroup of $G$ mapping isomorphically onto an open subset of $G/K$, but 
it did not give rise to a \emph{torus-equivariant} isomorphism in Theorem \ref{thm:intersectS}.
\end{remark}

We now proceed to factorize $B'$, first introducing some useful notation.
Let $\Phi^-, \Phi^+$ be the sets of negative and positive roots of the root system of $G$ (relative to $T, B_+$). We identify $\Phi^- =\{ (i,j) \mid 1 \leq j < i \leq a+b\}$ and $\Phi^+ =\{ (i,j) \mid 1 \leq i < j \leq a+b\}$.
For $\alpha = (i,j) \in \Phi^- \cup \Phi^+$, let 
$u_\alpha\colon (k, +) \to G$ be the morphism of algebraic groups sending $c \in k$ to the matrix which is equal to the identity matrix except that the $(i,j)$ entry is $c$.  Then $u_\alpha$ is an isomorphism onto its image, known as a 1-parameter subgroup, which we denote by $U_\alpha$.

Let $I':=\{\alpha \in \Phi^- \mid U_\alpha \leq U'\}$, and consider the partition $I'=I'_+ \coprod I'_-$ where
\begin{equation}
I'_\pm := \setst{\alpha \in I'}{w\cdot \alpha \in \Phi^\pm}.
\end{equation}
Define subgroups of $U'$ as products of 1-parameter subgroups
\begin{equation}\label{eq:Upm}
U'_{\pm,w}=\prod_{\alpha \in I'_\pm} U_\alpha.
\end{equation}
Note that $U'_{-,w} = U'\cap wU_-w^{-1}$ and $U'_{+,w} = U'\cap wU_+w^{-1}$.

\begin{lemma}\label{lem:Bprimedecomp}
Let $U'\leq B'$ be the unipotent radical and $T' \leq B'$ the maximal torus of diagonal matrices in $B'$.  Then the product map  
\begin{equation}\label{eq:Bprimeiso}
U'_{-,w} \times U'_{+,w} \times T' \to B'
\end{equation}
is an isomorphism of varieties.
\end{lemma}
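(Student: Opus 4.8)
The plan is to factor the product map through the semidirect‑product decomposition of the connected solvable group $B'$ and a root‑subgroup decomposition of its unipotent radical $U'$, so that the lemma reduces to standard structure theory. First I would observe that $B'$ is a connected solvable linear algebraic group: it is a closed subgroup of $G$ all of whose elements are lower triangular (the displayed matrices are block lower triangular with lower triangular diagonal blocks, so $B'\subseteq B_-$), hence solvable; and as a variety it is a product of a torus and affine spaces (coming from the diagonal blocks $\ell_i,u_i$ and the off‑diagonal $*$‑blocks), hence irreducible and connected. Since $T'$ is a maximal torus of $B'$ and $U'$ its unipotent radical, the structure theory of connected solvable groups gives that multiplication $U'\times T'\to B'$, $(u,t)\mapsto ut$, is an isomorphism of varieties. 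So it suffices to prove that multiplication $U'_{-,w}\times U'_{+,w}\to U'$, $(u_-,u_+)\mapsto u_-u_+$, is an isomorphism of varieties; composing the two then gives the map in the statement.

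Next I would invoke the correspondence between $T$‑stable closed connected unipotent subgroups of the reductive group $G$ and closed unipotent subsets of the root system. Here $U'\subseteq U_-$ (unipotent elements of $B'\subseteq B_-$), and $U'$ is $T$‑stable: the unipotent radical is characteristic in $B'$, and $T$ normalizes $B'$ because conjugation by a diagonal matrix preserves the displayed block shape (it fixes the zero blocks, rescales the $*$‑blocks arbitrarily, and preserves invertible lower triangular and unipotent lower triangular diagonal blocks). Therefore $U'=\prod_{\alpha\in I'}U_\alpha$ with $I'=\{\alpha\in\Phi^-\mid U_\alpha\subseteq U'\}$ a closed subset of $\Phi^-$, and for \emph{any} enumeration of $I'$ the product morphism $\prod_{\alpha\in I'}U_\alpha\to U'$ is an isomorphism of varieties.

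Then I would note that the partition $I'=I'_-\sqcup I'_+$ is into closed subsets: $I'_-=I'\cap w^{-1}\Phi^-$ and $I'_+=I'\cap w^{-1}\Phi^+$, and the $w^{-1}$‑translate of a closed subset of $\Phi$ is again closed (since $w$ permutes the roots and respects addition), so each $I'_\pm$ is an intersection of closed subsets of $\Phi$. Now choose an enumeration of $I'$ listing all of $I'_-$ first and then all of $I'_+$. Applying the direct‑spanning isomorphism to $I'$, and separately to the closed subsets $I'_-$ and $I'_+$, and grouping the first block of factors with the second, identifies the multiplication map $U'_{-,w}\times U'_{+,w}\to U'$ with an isomorphism of varieties (using $U'_{\pm,w}=\prod_{\alpha\in I'_\pm}U_\alpha$ by definition, and the already‑recorded fact that these are subgroups). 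Combining with the isomorphism $U'\times T'\xrightarrow{\sim}B'$ from the first step yields the claim.

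I expect no genuine obstacle: the substance is packaged in the cited structure theory, and the two elementary verifications (that $B'$ is connected solvable, and that $I'_\pm$ are closed sets of roots) are routine. The single point that deserves care is that the direct‑spanning isomorphism holds for \emph{every} ordering of the root subset — this is exactly what licenses reordering $I'$ so that $I'_-$ entirely precedes $I'_+$ and then regrouping the product.
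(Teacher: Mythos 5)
Your proof is correct and follows essentially the same route as the paper: reduce to $U'\times T'\simeq B'$ (the standard structure theorem for connected solvable groups, cited in the paper as Springer, Thm.~6.3.5(iv)) and then use the fact that the $T$-normalized unipotent group $U'$ is directly spanned in any order by the root subgroups $U_\alpha$, $\alpha\in I'$, choosing an ordering with $I'_-$ before $I'_+$ (the paper gets this from Springer, Lem.~8.2.2, applied to $U'T$ with the full torus $T$, which is exactly the $T$-stability you verify). The extra observation that $I'_\pm$ are closed subsets of roots is harmless but not needed once the single reordered product map is known to be an isomorphism.
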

\begin{proof}
By \cite[Thm.~6.3.5(iv)]{Springerbook}, the product map $U' \times T' \to B'$ is an isomorphism of varieties.  
Therefore it is enough to show that the product map
\begin{equation}\label{eq:productisoclaim}
U'_{-,w} \times U'_{+,w} \to U'
\end{equation}
is also an isomorphism of varieties.

The hypotheses of \cite[Lem.~8.2.2]{Springerbook} hold for the connected, solvable linear algebraic group\footnote{We need $T$ instead of $T'$ here for condition (a) of the cited lemma to hold.} 
$U'T$ and set of isomorphisms $\{u_\alpha \mid \alpha \in I'\}$.  Therefore, the multiplication map
\begin{equation}\label{eq:Uprimeisomorphism}
\prod_{\alpha \in I'} u_\alpha \colon \prod_{\alpha \in I'} (k, +) \to U', \qquad (c_\alpha)_{\alpha \in I'} \mapsto \prod_{\alpha \in I'} u_\alpha(c_\alpha)
\end{equation}
is an isomorphism, where the product can be taken in any order.
Therefore, we choose the order where factors indexed by $I'_-$ come before those of $I'_+$ to rewrite \eqref{eq:Uprimeisomorphism} in the form \eqref{eq:productisoclaim}.
\end{proof}

Let $s,r,q,t$ be as in the previous subsection, and let $P'\leq G$ be the parabolic subgroup of block upper triangular matrices where the first $2s+r$ blocks, listed from top left to bottom right, have size $1$, the next block has size $q+t$, and the last $r$ blocks have size $1$.  

Left conjugating each factor of \eqref{eq:Bprimeiso} by $o$ and applying this isomorphism to the one in Proposition \ref{lem:openImmersion1}, we obtain an open immersion to a neighborhood of $1\,K \in K \backslash G$:
\begin{equation}
oU'_{-,w}o^{-1} \times oU'_{+,w}o^{-1} \times oT'o^{-1} \times oQo^{-1} \to K\backslash G.
\end{equation}
Then right conjugating each factor by $ow$ and using the isomorphisms of Lemma \ref{lem:sliceIso} and Proposition \ref{prop:slicesIsom} , we obtain an open immersion which can be written as 
\begin{equation}\label{eq:Psimap}
\Psi\colon S \times Z \rightarrow K\backslash G, \qquad (s,z) \mapsto s\, p(z)
\end{equation}
where $Z:=\left(w^{-1}U'_{+,w}w\right)\times\left(w^{-1}T'w\right)\times\left(w^{-1}Qw\right)$ and $p\colon Z\to P'$ is the multiplication map.  It can be directly checked that $w^{-1}Q_iw\leq P'$, $i = 1,2$, and since $w$ normalizes $T$, the image of $p$ is indeed in $P'$.

We will now see that this morphism is equivariant with respect to the torus $T_S\leq G$ defined in \eqref{eq:TSdef}. 
We let $T_S$ act by right multiplication on $S$ and on each factor of $Z$ by conjugation.  
The factors of $Z$ are stable under conjugation by the larger group $T$ since $w$ is a permutation matrix, and thus are also stable under conjugation by $T_S$.  It is immediate by cancelation of inverses that $\Psi$ is equivariant with respect to this action on the left hand side, and right multiplication of $T_S$ on $K\backslash G$.

\begin{theorem}\label{thm:intersectS}
For any $P'$-orbit closure $\overline{O} \subseteq K\backslash G$, the morphism $\Psi$ of \eqref{eq:Psimap} restricts to a $T_S$-equivariant isomorphism of schemes, where $\im \Psi \subset K \backslash G$ is considered as an open subscheme:
\begin{equation}\label{eq:Psiproperty}
(\overline{O} \cap S)\times Z \simeq \overline{O} \cap \im \Psi .
\end{equation}
Furthermore, the statement above remains true when $P'$-orbits are replaced by $P$-orbits, where $P \leq G$ is any parabolic subgroup containing $P'$.
\end{theorem}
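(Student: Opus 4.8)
The plan is to recognize $\Psi$ as a restriction of the right action morphism on $K\backslash G$ and to exploit that the relevant orbit closures are stable under that action. Write $\rho\colon (K\backslash G)\times P'\to K\backslash G$ for the right $G$-action restricted to $P'$, let $\iota_S\colon S\hookrightarrow K\backslash G$ denote the inclusion of the locally closed subvariety $S$, and recall the multiplication map $p\colon Z\to P'$. By the definition of $\Psi$ in \eqref{eq:Psimap} we have $\Psi=\rho\circ(\iota_S\times p)$, so the whole statement will follow once we compute the scheme-theoretic preimage $\Psi^{-1}(\overline{O})$, using the already-established fact that $\Psi$ is an open immersion onto $\im\Psi$.

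The key computation is the scheme-theoretic identity $\rho^{-1}(\overline{O})=\overline{O}\times P'$ inside $(K\backslash G)\times P'$. To get this, first observe that the $G$-action restricts to a morphism $\overline{O}\times P'\to\overline{O}$: the action morphism sends the dense orbit of $\overline{O}$ into $\overline{O}$, hence, $\overline{O}\times P'$ being reduced, it sends all of $\overline{O}\times P'$ into the closed set $\overline{O}$. Consequently the shearing map $\sigma\colon(K\backslash G)\times P'\to(K\backslash G)\times P'$, $(x,p)\mapsto(x\cdot p,p)$, which is an automorphism with inverse $(y,p)\mapsto(y\cdot p^{-1},p)$, restricts to an automorphism of the closed subscheme $\overline{O}\times P'$. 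Since $\rho=q_1\circ\sigma$ with $q_1$ the first projection, we obtain $\rho^{-1}(\overline{O})=\sigma^{-1}(q_1^{-1}(\overline{O}))=\sigma^{-1}(\overline{O}\times P')=\overline{O}\times P'$. Pulling this back along $\iota_S\times p$ and using $q_1\circ(\iota_S\times p)=\iota_S\circ\mathrm{pr}_S$ gives
\[
\Psi^{-1}(\overline{O})=(\iota_S\times p)^{-1}(\overline{O}\times P')=\mathrm{pr}_S^{-1}\bigl(\iota_S^{-1}(\overline{O})\bigr)=(\overline{O}\cap S)\times Z,
\]
with $\overline{O}\cap S$ the scheme-theoretic intersection. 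Since $\Psi$ is an open immersion, $\Psi^{-1}(\overline{O})=\Psi^{-1}(\overline{O}\cap\im\Psi)$ and $\Psi$ restricts to an isomorphism from $\Psi^{-1}(\overline{O})$ onto $\overline{O}\cap\im\Psi$; combined with the display this yields the desired isomorphism $(\overline{O}\cap S)\times Z\xto{\sim}\overline{O}\cap\im\Psi$.

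For $T_S$-equivariance, note first that $T_S\subseteq T\subseteq P'$: every matrix of the form \eqref{eq:TSdef} is diagonal (each block $Jt_iJ$ is diagonal), so $T_S$ lies in the diagonal torus, which lies in the block-upper-triangular parabolic $P'$. Hence $\overline{O}$ is $T_S$-stable; $S$ is $T_S$-stable by the explicit formula \eqref{eq:TSdef}; each factor of $Z$ is $T_S$-stable under conjugation because $T_S\subseteq T$ normalizes it; and $\im\Psi$ is $T_S$-stable because $\Psi$ is a $T_S$-equivariant open immersion (equivariance being established just before the theorem). The isomorphism produced above is a restriction of $\Psi$, hence $T_S$-equivariant.

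Finally, the extension to an arbitrary parabolic $P\supseteq P'$ is immediate, since the argument above used only that $\overline{O}$ is a $P'$-stable (reduced) closed subscheme and that $p(Z)\subseteq P'$; it never used that $\overline{O}$ is a single $P'$-orbit closure. A $P$-orbit closure is automatically $P'$-stable, and $T_S$-stable since $T_S\subseteq P'\subseteq P$, so every step goes through verbatim. I expect the only real (and rather mild) obstacle to be the bookkeeping needed to keep the preimage identities scheme-theoretic rather than merely set-theoretic — this is precisely what the shearing automorphism $\sigma$ accomplishes — since all the genuine geometric content has already been absorbed into the construction of $S$ and $\Psi$ and into Proposition~\ref{lem:openImmersion1}, leaving the remainder formal.
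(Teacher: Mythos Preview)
Your proof is correct, and rests on the same two ingredients as the paper's --- that $\Psi$ is an open immersion and that $\overline{O}$ is $P'$-stable --- but the execution is organized differently. The paper first checks the statement set-theoretically, then upgrades to schemes by taking the scheme-theoretic image $W$ of $(\overline{O}\cap S)\times Z$ under $\Psi$, arguing that $W\subseteq\overline{O}$ as closed subschemes with the same underlying space, and invoking reducedness of $\overline{O}$ to force $W=\overline{O}$. You instead compute the scheme-theoretic \emph{preimage} $\Psi^{-1}(\overline{O})$ in one stroke via the shearing automorphism $\sigma$, which is tidier: preimages commute with everything, so you never have to separate the topological and scheme-theoretic layers. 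Both arguments invoke reducedness of $\overline{O}$ at essentially the same point (yours when showing $\sigma$ restricts to an automorphism of $\overline{O}\times P'$, the paper's when concluding $W=\overline{O}$), so neither is more general in that respect; your route is simply a bit more direct.
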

\begin{proof}
We may assume that $\overline{O} \cap \im \Psi$ is nonempty, otherwise the statement is trivial.
We first see that \eqref{eq:Psiproperty} holds at the level of varieties (i.e. taking the induced reduced subscheme of $\overline{O} \cap S$).  The restriction of the open immersion $\Psi$ to the closed subvariety $(\overline{O} \cap S)\times Z \subset S \times Z$ is an isomorphism of varieties onto its image, which is contained in $\overline{O} \cap \im \Psi$ since $\overline{O}$ is $P'$-stable. 
To see that the image of this restriction is exactly $\overline{O} \cap \im \Psi$, simply note that if $\Psi(s,z)=s\,p(z) \in \overline{O} \cap \im \Psi$, then $s=(s\,p(z))p(z)^{-1} \in \overline{O}$ as well.  So we actually have $(s,z) \in (\overline{O} \cap S)\times Z$, and thus the image of the restriction is exactly $\overline{O} \cap \im \Psi$.

Now let $W$ be the scheme-theoretic image of the restriction of $\Psi$ to the closed subscheme $(\overline{O} \cap S)\times Z$ (both of which could be nonreduced, a priori). 
Then $\Psi$ restricts to an open immersion from $(\overline{O} \cap S)\times Z$ to $W$.  
From the first paragraph, we know that the underlying topological spaces of $W$ and $\overline{O}$ are the same (each is the closure of $\Psi((\overline{O} \cap S)\times Z)=\overline{O} \cap \im \Psi$ in $K\backslash G$).
Since $(\overline{O} \cap S) \subset \overline{O}$ and the latter is $P'$-stable, we also have that $W \subseteq \overline{O}$ is a closed subscheme.  But the orbit closure $\overline{O}$ is reduced, so  $W=\overline{O}$ as subschemes of $K\backslash G$.
\end{proof}

\section{Embedding representation spaces of $Q^*(n)$ into a slice}\label{sect:embedding}
Let $Q^*(n)$ be as in \eqref{eq:Dnquiver}, and let $\bd^*$ be a dimension vector for $Q^*(n)$. 
To the representation space $\rep_{Q^*(n)}(\bd^*)$ we associate a symmetric variety $K\backslash G$ and a slice $S(\bd^*)\subseteq K\backslash G$ as in Section \ref{sect:linearSliceDefinition}. We also define a closed embedding $\eta:\rep_{Q^*(n)}\rightarrow S(\bd^*)$, which will be an important ingredient in our proof of Theorem \ref{thm:mainTheorem}. 

\subsection{A particular slice and closed embedding}\label{sect:setupNotation}
Let $Q^*(n)$ be as in \eqref{eq:Dnquiver}, and let $\bd^*$ be a dimension vector for $Q^*(n)$. 
We denote the total dimension by $d^*$, and let $a = d^*-\bd^*(y_0')$ and $b = d^*-\bd^*(y_0)$. Let $G = GL(a+b)$, and let $K=GL(a) \times GL(b)$ embedded as two blocks along the diagonal of $G$. 
Let $d^*_x = \sum_{i=0}^n\bd^*(x_i)$ and $d^*_y = \sum_{i=1}^n\bd^*(y_i)$.

Let $S(\bd^*)\subseteq K\backslash G$ denote the image, under the natural projection map $G\rightarrow K\backslash G$, of the space of matrices from \eqref{eq:v01} where we set $q = \bd^*(y_0)$, $r = d^*_y$, $s = d^*_x$, and $t = \bd^*(y_0')$.  Then $S(\bd^*)$ consists of points represented by matrices of the form below, where a block filled with zeroes is indicated by a dot:
\begin{equation}\label{eq:Sd}
 \begin{blockarray}{cccccccccccc}
          & d^*_x & d^*_x & d^*_y & \bd^*(y_0) & \bd^*(y_0') & d^*_y \\
      \begin{block}{c[ccccccccccc]}
\bd^*(y_0) & \cdot & * & \cdot & J &  \cdot & \cdot \\
d^*_y & \cdot & * & J & \cdot & \cdot & \cdot \\
d^*_x & J & \cdot & \cdot & \cdot & \cdot & \cdot \\
\cline{1-11}
\bd^*(y_0') & \cdot & \star & \cdot & \cdot & 1 & \cdot \\
d^*_y & \cdot & \star & J & \cdot & \cdot & 1 \\
d^*_x & J & 1 & \cdot & \cdot & \cdot & \cdot \\
      \end{block}
    \end{blockarray}.
\end{equation}
Note that the horizontal dividing line in the above matrix becomes important in Section \ref{sect:Smirnov}.

Let $A$ and $B$ be the zig-zag matrices from \eqref{eq:M}, 
and let $A_V$ and $B_V$ denote their evaluation at $V\in \rep_{Q^*(n)}(\bd^*)$. 
Let $B'_V$ be the matrix $[0 ~~B_V] $ where $0$ denotes a zero matrix of width $\bd^*(x_0)$. 
Define a closed embedding $\eta\colon \rep_{Q^*(n)}(\bd^*)\rightarrow S(\bd^*)$ by 
\begin{equation}\label{eq:defEta}
\eta(V)= 
\vcenter{\hbox{
  \begin{tikzpicture}
     \matrix (M) [%
       matrix of math nodes, column sep=20, row sep=6,
       nodes in empty cells,
       inner sep=3pt,
       left delimiter={[},
    right delimiter={]}
     ] {%
\cdot & \phantom{A} & \cdot &  J & \cdot & \cdot \\
\cdot & \phantom{A} & J & \cdot & \cdot & \cdot  \\
J & \cdot & \cdot & \cdot & \cdot & \cdot  \\
\cdot & \phantom{A} & \cdot &  \cdot& 1 & \cdot  \\
\cdot & \phantom{A} & J & \cdot & \cdot & 1  \\
J & 1 & \cdot & \cdot & \cdot & \cdot \\
    };
\draw (M-1-2.north west) rectangle (M-2-2.south east) node[pos=.5,scale=1] {$A_V$};
\draw (M-4-2.north west) rectangle (M-5-2.south east) node[pos=.5,scale=1] {$B'_V$};
     \node[anchor=south east, left=25pt,above=10pt] (M-0-0) at (M-1-1.north west) {};
  \node (M-1-0) at (M-0-0 |- M-1-1) [xshift=-5.5ex] {$\bd^*(y_0)$};
  \node (M-2-0) at (M-0-0 |- M-2-2) [xshift=-5.5ex,yshift=0.5ex] {$d^*_y$};
  \node (M-3-0) at (M-0-0 |- M-3-3) [xshift=-5.5ex] {$d^*_x$};
  \node (M-4-0) at (M-0-0 |- M-4-4) [xshift=-5.5ex] {$\bd^*(y_0') $};
  \node (M-5-0) at (M-0-0 |- M-5-5) [xshift=-5.5ex,yshift=0.5ex] {$d^*_y$};
  \node (M-6-0) at (M-0-0 |- M-6-3) [xshift=-5.5ex] {$d^*_x$};
  \node (M-0-1) at (M-0-0 -| M-1-1) {$d^*_x$};
  \node (M-0-2) at (M-0-0 -| M-2-2) {$d^*_x$};
  \node (M-0-3) at (M-0-0 -| M-3-3) {$d^*_y$};
  \node (M-0-4) at (M-0-0 -| M-4-4) [xshift=-0.75ex] {$\bd^*(y_0)$};
  \node (M-0-5) at (M-0-0 -| M-5-5) [xshift=0.75ex] {$\bd^*(y'_0)$};
  \node (M-0-6) at (M-0-0 -| M-6-6) {$d^*_y$};
\draw[-] (M-3-0.south west) -- (M-3-0.south west -| M-3-6.south east);
   \end{tikzpicture}}}
\end{equation}
where the rectangles around $A_V, B'_V$ indicate that these each take up 2 block rows within the second block column (the only way they fit into this column with their given sizes).

\subsection{Block structure and group action}\label{sect:blockstructure}
We now introduce a block structure on elements of $S(\bd^*)$ 
refining the block structure seen in \eqref{eq:Sd}, 
and a parabolic subgroup $P(\bd^*) \subset G$ acting on $K\backslash G$ by right multiplication. 
We will see that the embedding $\eta$ above is equivariant with respect to an embedding of $GL(\bd^*) \into P(\bd^*)$.

First we consider sequences of vertices of $Q^*(n)$ used to label the block rows and columns of matrices in $S(\bd^*)$.  
We consider an element of $S(\bd^*)$ as an $a \times (a+b)$ matrix $M$ stacked on top of a $b \times (a+b)$ matrix $N$, where $a, b$ are as above.
Afterwards, we will describe the sizes of the blocks, which depend also on $\bd^*$.
To start with, consider the sequences
\[
x=(x_0, x_1, \cdots, x_n), \qquad y=(y_1, \cdots, y_n).
\]
Using $\vartriangleleft$ to denote concatenation of sequences, we label the block rows of $M$ from top to bottom with $(y_0) \vartriangleleft y \vartriangleleft x$, and label the block rows of $N$ from top to bottom with
$(y'_0) \vartriangleleft y \vartriangleleft x$.

The block column labels are slightly more complicated.  Let 
\[
x^s=(x^s_0,  x^s_1, \cdots , x^s_n), \qquad y^s=(y^s_1, \cdots, y^s_n)
\]
be ``second'' copies of the sets $x, y$, and write $\tilde{}$ over a sequence to denote its reversal.  We also define a formal symbol $y_0 + y'_0$.
 Label block columns of both $M$ and $N$ by the sequence
 \begin{equation}\label{eq:Cn}
\cC(n)=\tilde{x} \vartriangleleft x^s \vartriangleleft \tilde{y} \vartriangleleft (y_0 + y'_0) \vartriangleleft  y^s.
 \end{equation}
See Figure \ref{fig:exampleSlice} for an example of this labeling when $n=2$.

When referring to column indices in the remainder of the paper, we consider $\cC(n)$ as a linearly ordered set in the natural way.  For example, we have comparisons
\begin{equation}
x_3 < x_1 < x_2^s < x_3^s < y_3 < y_1 < y_1^s < y_2^s.
\end{equation}

The sizes of the blocks in $S(\bd^*)$ are determined by the labels in the following intuitive way.  
A block row or column labeled by a single vertex $v\in Q^*(n)$ (or $v^s$) has size $\bd^*(v)$, and the block column labeled by $y_0+y_0'$ has size $\bd^*(y_0)+\bd^*(y_0')$.  In examples and figures below, we continue illustrating the single $y_0+y_0'$ block column with two columns, even though they are grouped together.

Finally, we let $P:=P(\bd^*) \subseteq G$ denote the parabolic subgroup of block upper triangular matrices where sizes of the column blocks are the same as those for $S(\bd^*)$.  Thus, right multiplication of a matrix in $P(\bd^*)$ by a matrix representing a point of $S(\bd^*)$ is compatible with the block structure (although $S(\bd^*)$ is not $P(\bd^*)$-stable).

Recall that we denote a typical element of $GL(\bd^*)$ by a list $(g_z)_{z \in Q^*(n)_0}$.  We write 
\[
g_x = {\rm diag}(g_{x_0}, \dotsc, g_{x_n}), \qquad g_y = {\rm diag}(g_{y_1}, \dotsc, g_{y_n})
\]
for the block diagonal embeddings of these sequences of elements in $GL_{d^*_x}$ and $GL_{d^*_y}$, respectively. 
Let $\theta\colon GL(\bd^*) \into P(\bd^*)$ be the map which sends $(g_z)_{z \in Q^*(n)_0}$ to the following matrix.
\begin{equation}\label{eq:blockdiag}
\theta((g_z)):= \begin{blockarray}{cccccccccccc}
          & d^*_x & d^*_x & d^*_y & \bd^*(y_0) & \bd^*(y_0') & d^*_y \\
      \begin{block}{c[ccccccccccc]}
d^*_x & Jg_xJ & \cdot & \cdot & \cdot & \cdot & \cdot \\
d^*_x & \cdot & g_x & \cdot & \cdot & \cdot & \cdot \\
d^*_y & \cdot & \cdot & Jg_y J & \cdot & \cdot & \cdot \\
\bd^*(y_0) & \cdot & \cdot & \cdot & Jg_{y_0} J & \cdot & \cdot \\
\bd^*(y_0') & \cdot & \cdot & \cdot & \cdot & g_{y'_0} & \cdot \\
d^*_y & \cdot & \cdot & \cdot & \cdot & \cdot & g_y  \\
      \end{block}
    \end{blockarray}
\end{equation}
The following lemma can be directly checked by matrix multiplication.
\begin{lemma}\label{lem:closedEmbeddingGroups}
The map $\theta: GL(\bd^*) \into P(\bd^*)$ is a closed embedding of algebraic groups, and $\eta$ is equivariant with respect to this embedding.
\end{lemma}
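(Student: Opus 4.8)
The plan is to verify both assertions by direct block-matrix computation, organized around the partitions introduced above. For the \textbf{closed embedding of groups}, first note that $\theta$ lands in the matrices that are block-diagonal for the six-block partition $(d^*_x, d^*_x, d^*_y, \bd^*(y_0), \bd^*(y_0'), d^*_y)$ --- in fact block-diagonal already for the finer partition $\cC(n)$, which in particular shows $\theta(g)\in P(\bd^*)$. Since $\theta(g)$ and $\theta(h)$ are block-diagonal for the same partition, $\theta(g)\theta(h)$ is too, with blocks the products of corresponding blocks; using $J^2 = \mathrm{id}$ we get $(Jg_xJ)(Jh_xJ) = J(g_xh_x)J$, and likewise for the $d^*_y$- and $\bd^*(y_0)$-blocks, while the remaining block is $g_{y_0'}h_{y_0'}$. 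As $(g_z)\mapsto g_x$ and $(g_z)\mapsto g_y$ are homomorphisms, each block of $\theta(g)\theta(h)$ agrees with the corresponding block of $\theta(gh)$, so $\theta$ is a homomorphism of algebraic groups. It is injective since the tuple $(g_z)_z$ can be recovered from $\theta((g_z))$ as ($J$-conjugates of) diagonal sub-blocks, and this same extraction procedure is a morphism that is a left inverse to $\theta$; as the image of $\theta$ is moreover the closed subgroup of $G$ cut out by the vanishing of the off-diagonal blocks together with the linear relations prescribing the internal shape of the diagonal blocks, $\theta$ is an isomorphism onto a closed subgroup, hence a closed embedding.

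For \textbf{equivariance}, i.e. $\eta(V\cdot g) = \eta(V)\cdot\theta(g)$ in $K\backslash G$, the one nontrivial input is the transformation law of the zig-zag matrices under the action $V\cdot g = (g_{ta}^{-1}V_a g_{ha})$ of Section \ref{sect:quivers}. Every nonzero block of the $Q^*$-matrices $A$, $B$ of \eqref{eq:M} is a single arrow, so evaluating at $V\cdot g$ simply conjugates each block by the $GL$-factors at its row and column vertices; writing $g_Y := \diag(g_{y_0}, g_{y_1},\dots,g_{y_n})$ and $g_{Y'} := \diag(g_{y_0'}, g_{y_1},\dots,g_{y_n})$, this gives $A_{V\cdot g} = g_Y^{-1} A_V\, g_x$ and $B_{V\cdot g} = g_{Y'}^{-1} B_V\,\diag(g_{x_1},\dots,g_{x_n})$, whence $B'_{V\cdot g} = g_{Y'}^{-1} B'_V\, g_x$ after padding by the width-$\bd^*(x_0)$ zero block. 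I would then compute $\eta(V)\cdot\theta(g)$ blockwise: as $\theta(g)$ is block-diagonal, it rescales block column $j$ of $\eta(V)$ by the $j$-th diagonal block of $\theta(g)$, and the identity $J\cdot(JXJ) = XJ$ turns every antidiagonal block $J$ of $\eta(V)$ into the appropriate $g_\bullet J$, turns the block column containing $A_V$ (resp.\ $B'_V$) into $A_V g_x$ (resp.\ $B'_V g_x$), and turns the two identity blocks into $g_{y_0'}$ and $g_y$. Matching the result against $\eta(V\cdot g)$ via the transformation laws above yields $\eta(V)\cdot\theta(g) = k\cdot\eta(V\cdot g)$ for $k$ the block-diagonal matrix $\diag(g_{y_0}, g_y, g_x)\times\diag(g_{y_0'}, g_y, g_x)$; since its two diagonal blocks have sizes $\bd^*(y_0)+d^*_y+d^*_x = a$ and $\bd^*(y_0')+d^*_y+d^*_x = b$, we have $k\in K$, so $\eta(V)\cdot\theta(g)$ and $\eta(V\cdot g)$ represent the same point of $K\backslash G$, as required.

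Everything here is mechanical once set up, so the \textbf{main difficulty is purely organizational}: correctly tracking the six-block row and column partitions, the positions of the antidiagonal permutation blocks $J$, and which $GL$-factor acts on each block row or column of $A_V$, $B_V$, and the padded matrix $B'_V$. The relevant cancellations are forced precisely because the $J$-conjugations built into $\theta$ in \eqref{eq:blockdiag} are matched to the $J$-blocks appearing in the definition \eqref{eq:defEta} of $\eta$; beyond recording the transformation law for the zig-zag matrices, no new ideas are needed.
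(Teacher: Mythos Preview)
Your argument is correct and follows exactly the approach the paper indicates: the paper's proof consists of the single sentence ``can be directly checked by matrix multiplication,'' and what you have written is precisely that direct check, carried out carefully block by block. The only thing to note is that you have supplied considerably more detail than the paper does, but the strategy---verify $\theta$ is a homomorphism using $J^2=1$, then match $\eta(V)\theta(g)$ against $\eta(V\cdot g)$ up to left multiplication by a block-diagonal element of $K$---is exactly what the paper intends.
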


Let $T(\bd^*) \leq GL(\bd^*)$ be the maximal torus of diagonal matrices in each factor. Then $\theta(T(\bd^*))=T_S$ from Section \ref{sect:linearSliceDefinition}.

\section{Describing intersections of $P$-orbits in $K\backslash G$ with the slice}\label{sect:Smirnov}

Fix a quiver $Q^*(n)$ as in \eqref{eq:Dnquiver} and a dimension vector $\bd^*$ for $Q^*(n)$. Let $G, K, S(\bd^*), P(\bd^*),$ and $\eta: \rep_{Q^*(n)}(\bd^*)\rightarrow S(\bd^*)$ be as in Section \ref{sect:embedding}. Let $O_x\subseteq K\backslash G$ denote the $P(\bd^*)$-orbit through $x\in K\backslash G$, and let $\overline{O}_x$ denote its closure in $K\backslash G$.

In this section, we describe the intersections $O_x\cap S(\bd^*)$ and $\overline{O}_x\cap S(\bd^*)$ in terms of certain $\mathbb{Z}$-valued functions. Our main result is that if $x = \eta(V)$ for some representation $V\in \rep_{Q^*(n)}(\bd^*)$, then these intersections are characterized in terms of the $\mathbb{Z}$-valued functions $\mathcal{D}_n$ from \eqref{eq:distCollection}. See Proposition \ref{prop:eta1Orbit} for a precise statement. 

\subsection{$\mathbb{Z}$-valued functions determining orbits and section overview}\label{sect:SmirnovDef}
It is well-known that there are finitely many $B_+$-orbits (and therefore also $P(\bd^*)$-orbits) in $K\backslash G$, and that furthermore the $P(\bd^*)$-orbit through a given point in $K\backslash G$ is characterized by a collection of matrix ranks (see for example \cite{Smirnov08}).  

It is also true, though significantly more difficult\footnote{Since $B$-orbit closures in $K \backslash G$ are not linearly ordered, it is not even a priori clear that these rank inequalities define irreducible subvarieties. } to prove that orbit closure containment can be characterized by rank inequalities. 
This characterization is Theorem \ref{thm:Porbitranks} below. It can be obtained by translating work of Wyser \cite[Cor.~1.3]{Wyser16} to our setup, or applying Bongartz's result \cite[Thm.~3.3]{Bongartz96} in conjunction with the ideas of Theorem \ref{thm:GKinD}.

In what follows, we use the matrix definitions from Section \ref{sect:prelimMatrixDefs}. 
A pair of matrices $(M,N)$, with $M \in \Mat(a,a+b), N \in \Mat(b,a+b)$, will be used to denote a point of $\Mat(a+b,a+b)$ by stacking $M$ on top of $N$.  When the resulting matrix is invertible, we also use it to represent a point of $K\backslash G$.
Given $v\in \cC(n)$ (for $\cC(n)$ as in \eqref{eq:Cn}),  define the $\mathbb{Z}$-valued functions $\mathscr{U}_v$ and $\mathscr{L}_v$ on $K\backslash G$ by $\mathscr{U}_v(M,N) := \rank M_{[x_n,v]}$ and $\mathscr{L}_v(M,N) := \rank N_{[x_n,v]}$. Given $v < w$ in $\cC(n)$, define the $\mathbb{Z}$-valued function $\mathscr{B}_{v,w}$ on $K\backslash G$ by $\mathscr{B}_{v,w}(M,N) := \rank (M,N)_{[[v,w]]}$. 
Note that $\mathscr{U}$, $\mathscr{L}$, and $\mathscr{B}$ stand for ``upper'', ``lower'', and ``both'' respectively.

\begin{theorem}\label{thm:Porbitranks}
Let $(M,N), (\tilde{M},\tilde{N})\in K\backslash G$.   
The containment of $P(\bd^*)$-orbit closures $\overline{O}_{(M,N)}\subseteq \overline{O}_{(\tilde{M},\tilde{N})}$ holds if and only if 
\begin{enumerate}[(i)]
\item $\mathscr{U}_v(M,N) \leq \mathscr{U}_v(\tilde{M},\tilde{N})$ and $\mathscr{L}_v(M,N) \leq \mathscr{L}_v(\tilde{M},\tilde{N})$ for all $v \in \cC(n)$; and
\item $\mathscr{B}_{v,w}(M,N) \leq  \mathscr{B}_{v,w}(\tilde{M},\tilde{N})$ for all $v < w$ in $\cC(n)$.
\end{enumerate}
The equality of $P(\bd^*)$-orbits $O_{(M,N)} = O_{(\tilde{M},\tilde{N})}$ holds if and only if (i) and (ii) hold upon replacing each inequality of function values by an equality of function values. 
\end{theorem}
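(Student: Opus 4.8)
The plan is to deduce both statements from the known combinatorial model for the closure order on $B_+$-orbits in $K\backslash G$ (equivalently, $K$-orbits in $G/B_+$, equivalently $K\times B_+$-orbits in $G$), and then to pass from the Borel $B_+$ to the coarser parabolic $P(\bd^*)$. First I would dispatch the easy direction. Each function in $\{\mathscr{U}_v,\mathscr{L}_v\}_{v\in\cC(n)}\cup\{\mathscr{B}_{v,w}\}_{v<w}$ is the rank of a matrix built from a representative $(M,N)$ by deleting rows and columns and inserting zero blocks; since left multiplication by $K$ acts within block rows and right multiplication by $P(\bd^*)$ acts by invertible block-upper-triangular column operations compatible with the linear order on $\cC(n)$, each of these ranks is unchanged, so all three families are $P(\bd^*)$-invariant and hence constant on $P(\bd^*)$-orbits. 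Moreover each such rank function is lower semicontinuous (its sublevel sets $\{\rank\le c\}$ are closed). Thus if $\overline{O}_{(M,N)}\subseteq\overline{O}_{(\tilde M,\tilde N)}$ then $(M,N)$ lies in the closed set $\overline{O}_{(\tilde M,\tilde N)}$, which is contained in the sublevel set where each of our rank functions is at most its (constant) value on $O_{(\tilde M,\tilde N)}$; this gives inequalities (i) and (ii), and applying this in both directions gives the forward implication of the orbit-equality statement.

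For the converse, the essential input is the combinatorial description of $B_+$-orbit closures in $K\backslash G$. Transporting Wyser's theorem \cite[Cor.~1.3]{Wyser16} along the poset isomorphism between $K$-orbit closures in $G/B_+$ and $B_+$-orbit closures in $K\backslash G$ shows that $B_+$-orbit closure containment in $K\backslash G$ is governed by a complete list of rank inequalities on submatrices of a representative $(M,N)$ of precisely the three types appearing in our theorem, but indexed by every step of the standard complete flag rather than just by $\cC(n)$; in particular this identifies $\overline{O}_{(M,N)}$ (for the Borel) with the irreducible locus cut out by the inequalities ``each rank $\le$ its value at $(M,N)$'', and the orbit $O_{(M,N)}$ with the corresponding equalities. (Alternatively, one may combine the fiber-bundle reductions of Section \ref{sect:GrassinD} with a type $A$ partial-flag analogue to realize the $P(\bd^*)$-orbit closures in $K\backslash G$ as a subposet of base-change-group orbit closures in a representation variety of an equioriented type $A$ quiver, and then invoke Bongartz's criterion \cite[Thm.~3.3]{Bongartz96}: containment is equivalent to $\dim\Hom(N,V)\le\dim\Hom(N,W)$ for all indecomposable $N$, and, exactly as in the proof of Proposition \ref{prop:quiverranksorbit}, via projective resolutions each such Hom-dimension is an affine-linear function of ranks of evaluation matrices; Auslander's theorem then yields the orbit-equality statement.)

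The remaining work, and what I expect to be the genuine obstacle, is twofold: (a) the passage from $B_+$ to $P(\bd^*)$, namely that among the full list of rank statistics attached to the complete flag, only those indexed by $v\in\cC(n)$ together with the functions $\mathscr{B}_{v,w}$ are needed to separate and to order the coarser $P(\bd^*)$-orbits, the remaining Borel-level inequalities at the merged flag steps being automatic once one restricts to the partial flag refined by $P(\bd^*)$; here the point is that a $P(\bd^*)$-orbit closure is the union of the $B_+$-orbit closures of its members, and the ``mixed'' data at interior flag steps is already subsumed by the $\mathscr{B}_{v,w}$, which is exactly what the block construction $(M,N)_{[[v,w]]}$ of \eqref{eq:splitMatrix} was designed to record. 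And (b) matching the surviving rank statistics with the explicit functions $\mathscr{U}_v=\rank M_{[x_n,v]}$, $\mathscr{L}_v=\rank N_{[x_n,v]}$, and $\mathscr{B}_{v,w}=\rank(M,N)_{[[v,w]]}$, which is a direct but lengthy unwinding of the definitions in Section \ref{sect:prelimMatrixDefs} against the standard indexing of clan rank conditions. Once (a) and (b) are in place, the converse follows: inequalities (i) and (ii) at $(M,N)$ force $(M,N)$ into $\overline{O}_{(\tilde M,\tilde N)}$ (using the irreducible-locus description above), hence $\overline{O}_{(M,N)}\subseteq\overline{O}_{(\tilde M,\tilde N)}$, and replacing the inequalities by equalities pins down the $P(\bd^*)$-orbit.
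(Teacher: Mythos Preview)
Your proposal is correct and matches the paper's approach: the paper does not give a self-contained proof of this theorem but simply records (in the paragraph preceding the statement) that it follows by translating Wyser \cite[Cor.~1.3]{Wyser16} to the present setup, or alternatively via Bongartz \cite[Thm.~3.3]{Bongartz96} combined with the ideas of Theorem~\ref{thm:GKinD}; your sketch of the easy direction and of the Borel-to-parabolic passage is more detail than the paper itself provides. One small slip in your Bongartz alternative: the quiver produced by the fiber-bundle construction of Section~\ref{sect:GrassinD} is of type~$D$, not type~$A$.
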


In the remainder of this section, we systematically consider the restrictions of the functions $\mathscr{U}_v$, $\mathscr{L}_v$, and $\mathscr{B}_{v,w}$ to $S(\bd^*)$.
 We partition these functions into three classes:
\begin{description}
\item[Constant] those which are constant on $S(\bd^*)$ (Section \ref{sect:constant});
\item[Image] a subset whose values we fix to cut out the image of $\eta$ (Section \ref{sect:specializeTheSlice});
\item[Quiver] a subset which corresponds to quiver rank conditions (Section \ref{sect:remainingRanks}).
\end{description} 

Figures \ref{fig:ranktable1} and \ref{fig:ranktable2} illustrate this partition for the case $n=2$:
the constant type functions are marked with $C$, the image type functions are marked with $Im$, and the remaining quiver type functions are marked with a function in the set \eqref{eq:distCollection}.

The proposition below is the main result of this section and a key ingredient in the proof of Theorem \ref{thm:mainTheorem}.
Its proof is found at the end of Section \ref{sect:remainingRanks}.

\begin{proposition}\label{prop:eta1Orbit}
Let $V, W\in \rep_{Q^*(n)}(\bd^*)$ and let $\mathcal{D}_n$ be as in \eqref{eq:distCollection}. Then $\overline{O}_{\eta(V)}\cap S(\bd^*)$ is a reduced and irreducible subvariety of $\im (\eta)$. Furthermore, we have
\begin{enumerate}[(i)]
\item $\overline{O}_{\eta(V)}\cap S(\bd^*)\subseteq \overline{O}_{\eta(W)}\cap S(\bd^*)$ if and only if $f(V) \leq f(W)$, for all $f\in \mathcal{D}_n$, and
\item $O_{\eta(V)}\cap S(\bd^*) = O_{\eta(W)}\cap S(\bd^*)$ if and only if $f(V) = f(W)$, for all $f\in \mathcal{D}_n$.
\end{enumerate}
\end{proposition}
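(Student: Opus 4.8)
The plan is to reduce the whole statement to Theorem~\ref{thm:Porbitranks}, after a complete analysis of how the rank functions $\mathscr{U}_v$, $\mathscr{L}_v$, $\mathscr{B}_{v,w}$ restrict to $S(\bd^*)$. The first, and main, task is the classification announced in the section overview: partition $\{\mathscr{U}_v,\mathscr{L}_v\}_{v}\cup\{\mathscr{B}_{v,w}\}_{v<w}$ into the \emph{Constant}, \emph{Image}, and \emph{Quiver} classes. For a Constant function one checks, using the identity and anti-diagonal blocks together with the zero blocks of \eqref{eq:Sd}, that the relevant submatrix of every point of $S(\bd^*)$ has rank independent of the free blocks; write $c_{\mathscr{F}}$ for that common value. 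For the Image functions the goal is to produce a set $\mathcal{I}$ of rank functions and prescribed values $c_{\mathscr{F}}$, $\mathscr{F}\in\mathcal{I}$, such that \textbf{(a)} $\im(\eta)=\{x\in S(\bd^*):\mathscr{F}(x)=c_{\mathscr{F}}\text{ for all }\mathscr{F}\in\mathcal{I}\}$, and, crucially, \textbf{(b)} $\mathscr{F}(x)\ge c_{\mathscr{F}}$ for every $x\in S(\bd^*)$, so that $c_{\mathscr{F}}=\mathscr{F}(\eta(V))$ is the minimum of $\mathscr{F}$ on the slice, attained exactly along $\im(\eta)$. These functions record the two kinds of linear condition cutting $\im(\eta)$ out of $S(\bd^*)$ according to \eqref{eq:defEta}: certain entries of the free blocks of \eqref{eq:Sd} being forced to vanish, and certain submatrices of the $A_V$-block agreeing with submatrices of the $B_V$-block (the entries $\beta_i$, $\alpha_{i+1}$ of \eqref{eq:M} shared between $A$ and $B$); the latter coincidence is exactly what the stacking operation \eqref{eq:splitMatrix} underlying $\mathscr{B}_{v,w}$ detects. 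Finally, for the Quiver class I would match the remaining functions with $\mathcal{D}_n$: for each such $\mathscr{F}$ there is $f_{\mathscr{F}}\in\mathcal{D}_n$ so that $\mathscr{F}\circ\eta$ equals $f_{\mathscr{F}}$ up to an additive constant depending only on $\bd^*$, and every element of $\mathcal{D}_n$ arises this way. Concretely, after clearing the ambient identity and anti-diagonal blocks of $\eta(V)$ by elementary operations, the relevant submatrix is the evaluation at $V$ of one of the zig-zag matrices $[\gamma,\delta]$, $\llbracket\gamma,\delta\rrbracket$, $\llbracket\alpha_1,\delta\rrbracket^0$ of Section~\ref{sect:zigzagDefs}.

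Granting this classification, the proposition follows quickly. By Theorem~\ref{thm:Porbitranks}, $\overline{O}_{\eta(V)}=\{x: \mathscr{F}(x)\le \mathscr{F}(\eta(V))\text{ for every }\mathscr{F}\}$; intersecting with $S(\bd^*)$, the Constant constraints hold identically on $S(\bd^*)$, while for each Image $\mathscr{F}$ the inequality $\mathscr{F}\le\mathscr{F}(\eta(V))=c_{\mathscr{F}}$ together with \textbf{(b)} forces $\mathscr{F}\equiv c_{\mathscr{F}}$ on $\overline{O}_{\eta(V)}\cap S(\bd^*)$, and then \textbf{(a)} gives $\overline{O}_{\eta(V)}\cap S(\bd^*)\subseteq\im(\eta)$. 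For reducedness and irreducibility I would invoke Theorem~\ref{thm:intersectS} with $P=P(\bd^*)$, which is legitimate because $P(\bd^*)\supseteq P'$ (the block structure of $P'$ refines that of $P(\bd^*)$): it yields a $T_S$-equivariant isomorphism $(\overline{O}_{\eta(V)}\cap S(\bd^*))\times Z\cong\overline{O}_{\eta(V)}\cap\im \Psi$, and since $\eta(V)\in S(\bd^*)\subseteq\im \Psi$ lies in $\overline{O}_{\eta(V)}$, the right-hand side is a nonempty open subscheme of the reduced, irreducible scheme $\overline{O}_{\eta(V)}$; as $Z$ is a smooth connected variety with a $k$-point, these properties descend to $\overline{O}_{\eta(V)}\cap S(\bd^*)$.

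For part~(i): if $\overline{O}_{\eta(V)}\cap S(\bd^*)\subseteq\overline{O}_{\eta(W)}\cap S(\bd^*)$, then $\eta(V)\in\overline{O}_{\eta(W)}$, hence $\overline{O}_{\eta(V)}\subseteq\overline{O}_{\eta(W)}$, and Theorem~\ref{thm:Porbitranks} gives $\mathscr{F}(\eta(V))\le\mathscr{F}(\eta(W))$ for all $\mathscr{F}$; reading this off the Quiver functions yields $f(V)\le f(W)$ for every $f\in\mathcal{D}_n$. Conversely, if $f(V)\le f(W)$ for all $f\in\mathcal{D}_n$, then $\mathscr{F}(\eta(V))\le\mathscr{F}(\eta(W))$ holds on the Quiver functions by hypothesis and with equality on the Constant and Image functions (both $\eta(V)$ and $\eta(W)$ lie in $\im(\eta)$), so $\overline{O}_{\eta(V)}\subseteq\overline{O}_{\eta(W)}$ by Theorem~\ref{thm:Porbitranks}, whence the inclusion of slice intersections. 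Part~(ii) is the identical argument with all inequalities replaced by equalities, together with the elementary remark that the nonempty sets $O_{\eta(V)}\cap S(\bd^*)$ and $O_{\eta(W)}\cap S(\bd^*)$ are equal only if $O_{\eta(V)}=O_{\eta(W)}$.

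The principal obstacle is the classification itself, and inside it two points in particular: property~\textbf{(b)} for the Image functions, i.e.\ that the value prescribed to cut out $\im(\eta)$ really is the minimum of the rank function over \emph{all} of $S(\bd^*)$ rather than merely its value along $\im(\eta)$; and the value-preserving (up to additive constants) correspondence between the Quiver functions and $\mathcal{D}_n$. Both amount to careful bookkeeping: for each $\mathscr{U}_v$, $\mathscr{L}_v$, $\mathscr{B}_{v,w}$ one must identify precisely which block columns of $\cC(n)$ are involved, clear the identity and anti-diagonal blocks of \eqref{eq:Sd} by elementary operations, and recognize what remains in terms of the definitions of $[\gamma,\delta]$, $\llbracket\gamma,\delta\rrbracket$, and $\llbracket\gamma,\delta\rrbracket^0$. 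This is where the length of Sections~\ref{sect:constant}--\ref{sect:remainingRanks} goes, and why the proof of the proposition is deferred to the end of Section~\ref{sect:remainingRanks}.
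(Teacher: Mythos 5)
Your proposal is correct and follows essentially the same route as the paper: the same three-way partition of the rank functions into constant, image, and quiver types (the content of Proposition~\ref{prop:charRanks}, Lemma~\ref{lem:specSlice}, and Lemmas~\ref{lem:quiverConditionsOnSlice1p}--\ref{lem:upshot}), the same use of minimality of image-type functions on $S(\bd^*)$ against their maximality on $\overline{O}_{\eta(V)}$ to land in $\im(\eta)$, Theorem~\ref{thm:intersectS} for reducedness and irreducibility, and Theorem~\ref{thm:Porbitranks} combined with the quiver-type/$\mathcal{D}_n$ correspondence for (i) and (ii). The only difference is expository: you sketch rather than execute the block-by-block verifications, which is exactly where the paper's Sections~\ref{sect:constant}--\ref{sect:remainingRanks} spend their effort.
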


The next three sections establish preliminary results, whose proofs are essentially all direct checks that ranks of certain matrices are equal.
We warn the reader that these sections are rather technical, resulting from a long trial-and-error process rather than an easily summarizable intuition.
Although we provide a proof for arbitrary $n$, simply checking the results for the $n=2$ case using Figures \ref{fig:ranktable1} and \ref{fig:ranktable2} is likely to provide the reader with a sufficient understanding of the ideas.  In particular, already in this case one can see that the partition of the functions $\mathscr{U}_v, \mathscr{L}_v$, and $\mathscr{B}_{v,w}$ into the three types (constant, image, or quiver) is not straightforward, and keeping track of the partition for the functions $\mathscr{B}_{v,w}$ is particularly inconvenient.  
We warn the reader that the order in which the various functions are considered is essential to understanding the closed subvarieties they cut out.   
In Section \ref{sect:ind}, we show how to use induction to reduce the number of direct checks necessary.

\begin{figure}
\[
\begin{array}{c|c|c|c|c|c|c|c|c|c|c|c|}
v  & x_2&x_1 & x_0 & x^s_0 & x^s_1 & x^s_2 & y_2 & y_1 & y_0 + y'_0 & y^s_1 & y^s_2\\
\hline
\mathscr{U}_v &C& C & C & |\zb_0,\zb_0| & |\zb_0,\zb_1| & |\zb_0,\zb_2| & |\zb_0,\za_2| & |\zb_0,\za_1| & C & C & C \\
\hline
\mathscr{L}_v &C& C & C & Im & |\za'_1,\zb_1| & |\za'_1,\zb_2| & |\za'_1,\za_2| & |\za'_1,\za'_1| & C & C & C \\
\hline
\end{array}
\]
\caption{$\mathscr{U}_v$ and $\mathscr{L}_v$ for $n=2$.}\label{fig:ranktable1}
\end{figure}

\begin{figure}
\[
\begin{array}{c|c|c|c|c|c|c|c|c|c|c|}
v\quad  \backslash w  & x_1 & x_0 & x^s_0 & x^s_1 & x^s_2 & y_2 & y_1 & y_0 + y'_0 & y^s_1 & y^s_2\\
\hline
x_2 & C & C & C & C & |\za_2,\zb_2| & |\za_2,\za_2| & \red{Im} & \red{Im} & \red{Im} & C\\
\hline
x_1 & & C & C & || \alpha_1,\zb_1||^0 & || \za_1,\zb_2||^0& || \za_1,\za_2||^0 & || \za_1,\za_1||^0 & Im & \red{Im} & C\\
\hline
x_0  & & & |\zb_0, \zb_0| & || \za_1,\zb_1|| & || \za_1,\zb_2|| & || \za_1,\za_2|| & || \za_1,\za_1|| & Im & \red{Im} & C\\
\hline
x^s_0 & & & & ||\za_1, \zb_1|| & ||\za_1, \zb_2|| & ||\za_1, \za_2|| & ||\za_1, \za_1|| & Im & \red{Im} & C\\
\hline
x^s_1 & & & & & ||\za_2, \zb_2|| & ||\za_2, \za_2|| & ||\zb_1, \zb_1|| & |\zb_1, \zb_1| & \red{Im} & C\\
\hline
x^s_2 & & & & & & ||\zb_2, \zb_2|| & ||\zb_1, \zb_2|| & |\zb_1, \zb_2| & |\zb_2,\zb_2| & C\\
\hline
y_2 & & & & & & & ||\zb_1, \za_2|| & |\zb_1, \za_2| & C & C\\
\hline
y_1 & & & & & & & & C & C & C\\
\hline
y_0+y'_0 & & & & & & & & & C & C\\
\hline
y^s_1 & & & & & & & & & & C\\
\hline
\end{array}
\]
\caption{$\mathscr{B}_{v,w}$ for $n=2$. Red text indicates functions which become constant when restricted to $R(\bd)$ (see Section \ref{sect:ind}, Lemma \ref{lem:toRn}).}\label{fig:ranktable2}.
\end{figure}

\subsection{Constant type functions}\label{sect:constant}
The following proposition identifies which $\mathscr{U}_v$, $\mathscr{L}_v$, and $\mathscr{B}_{v,w}$  are constant when restricted to $S(\bd^*)$ (marked with $C$ in Figures \ref{fig:ranktable1} and \ref{fig:ranktable2}).

\begin{proposition}\label{prop:charRanks}
The following functions on $K\backslash G$ are constant when restricted to $S(\bd)$:
\begin{enumerate}[(i)]
\item $\mathscr{U}_v$ and $\mathscr{L}_v$ for $v\leq x_0$ or $v\geq y_0+y_0'$;
\item $\mathscr{B}_{v,y_n^s}$, for $v<y_n^s$;
\item  $\mathscr{B}_{x_i,w}$, for $x_i<w<x_i^s$, $1\leq i\leq n$;
\item $\mathscr{B}_{y_i, w}$, for $w\geq y_{i-1}^s$, $2\leq i\leq n$; 
\item $\mathscr{B}_{v,w}$, $y_1\leq v<w $. 
\end{enumerate}
\end{proposition}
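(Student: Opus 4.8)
The plan is to prove each item of Proposition \ref{prop:charRanks} by a direct computation: fix a matrix $M \in \tilde{S}(\bd^*)$ in the form \eqref{eq:Sd}, extract the relevant submatrix $M_{[x_n,v]}$, $N_{[x_n,v]}$, or $(M,N)_{[[v,w]]}$, and show via row/column operations that its rank does not depend on the ``starred'' free blocks $*$ and $\star$ of \eqref{eq:Sd}. The key structural feature to exploit is that the only non-constant entries of a slice matrix sit in the $d^*_x$-wide block column labeled by the ``second'' copies $x^s$ (which splits into the $*$ block, contributing to rows $y_0, y_1, \dots, y_n, x_n$, and the $\star$ block below the dividing line), together with the two identity/antidiagonal blocks $J$ occupying the other columns. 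Since the first $d^*_x$ columns (labeled $\tilde x$) and the last few columns (labeled $\tilde y \vartriangleleft (y_0+y_0') \vartriangleleft y^s$) contain only the fixed blocks $J$, $1$, and $0$, most of the indicated submatrices either avoid the starred column entirely or contain enough of a full-rank $J$-block to absorb the starred entries by column reduction.

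Concretely, I would organize the proof as follows. For (i): when $v \le x_0$, the submatrix $M_{[x_n,v]}$ (resp.\ $N_{[x_n,v]}$) lies inside the block columns labeled by $\tilde x$, all of whose entries are the fixed antidiagonal block $J$ or zero, so the rank is a constant depending only on $\bd^*$; when $v \ge y_0+y_0'$, the submatrix now includes \emph{all} columns of $M$ (resp.\ $N$), and since $M$, $N$ each have full row rank on all of $\tilde S(\bd^*)$ (the matrix \eqref{eq:Sd} is invertible), these ranks are constantly $a$ and $b$ respectively. For (ii)–(v), the pattern is the same but applied to the combined matrix $(M,N)_{[[v,w]]}$ of \eqref{eq:splitMatrix}: in each listed range, I claim the free blocks can be cleared. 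For (ii), $w = y_n^s$ is the last column, so the right-hand ``shared'' block of \eqref{eq:splitMatrix} includes the final $y^s_n$ column of $M$ and $N$, which in \eqref{eq:Sd} contains a fixed identity block $1$ (in the $x_n$ row of $M$, resp.\ the $d^*_y$ row of $N$) that can be used to clear the starred entries in that row; iterating down the $J$-blocks in columns $\tilde y$ handles the rest. For (iii), when $x_i < w < x_i^s$ the window $[v,w] = [x_i,w]$ stays within columns $\tilde x \vartriangleleft (x^s_0,\dots)$ but stops before re-entering a starred column far enough to matter — more precisely the starred block is confined to columns $x^s_j$ with $j$ in a controlled range, and a sequence of $J$-blocks in the $\tilde x$ columns pivots them away. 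Items (iv) and (v) are handled analogously, using that for $y_1 \le v$ the window $[v,w]$ sits entirely among the $\tilde y \vartriangleleft (y_0+y_0') \vartriangleleft y^s$ columns, where \eqref{eq:Sd} again only has fixed $J$, $1$, $0$ blocks.

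For the bookkeeping I would set up once and for all, at the start of the section, an explicit description of the entries of $M_{r,c}$ for $M \in \tilde S(\bd^*)$ in terms of the labels: namely $M_{r, c}$ is an antidiagonal block $J$ precisely when $(r,c)$ is one of the pairs visible in \eqref{eq:Sd} or \eqref{eq:Cn}, is an identity block for the three listed identity positions, is the corresponding free block $c_{?}, d_{?}, e_?, f_?$ when $c$ is the appropriate $x^s$ column, and is zero otherwise. With this table in hand, each of (i)–(v) reduces to the observation that the submatrix in question, after deleting zero rows/columns, is block-upper-triangular (in a suitable reordering) with the free blocks strictly above a ``staircase'' of invertible $J$ and $1$ blocks; its rank is therefore the number of $J$/$1$ blocks on the staircase, a constant. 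I expect the main obstacle to be purely organizational rather than conceptual: correctly identifying, for the $\mathscr{B}_{v,w}$ functions, exactly which $(v,w)$ windows fall into which case and verifying that the free blocks really are confined above the staircase — the column ordering on $\cC(n)$ (with the reversed blocks $\tilde x$, $\tilde y$ interleaved among the second copies) makes this error-prone, which is why the $n=2$ tables in Figures \ref{fig:ranktable1} and \ref{fig:ranktable2} are the safest way for a reader to check the claim. I would prove the general-$n$ case by an induction on $n$ set up in Section \ref{sect:ind}, reducing each staircase computation to the base case via the map to $R(\bd)$ referenced there.
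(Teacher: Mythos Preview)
Your overall plan---compute each rank by direct inspection of the block form \eqref{eq:Sd}---is correct and is essentially what the paper does. However, the paper's one-line proof rests on a simpler organizing observation than row-reducing the free blocks away: for each listed function, the relevant submatrix already has \emph{maximal} possible rank (full row rank or full column rank), so its rank equals an explicit constant depending only on $\bd^*$. Once you notice this, no staircase bookkeeping and no induction on $n$ are needed; you just check, case by case, that enough $J$ and $1$ blocks appear to make all rows (or all columns) independent regardless of the starred entries.

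A couple of your specific justifications are off, even if the conclusions survive. In (i) for $v \ge y_0 + y_0'$, the submatrix $M_{[x_n,v]}$ does \emph{not} include all columns of $M$ (for $v = y_0+y_0'$ the $y^s$ columns are absent); the point is rather that already the columns through $y_0+y_0'$ contain $J$ blocks hitting every row of $M$, so $M_{[x_n,v]}$ has full row rank $a$. In (ii), the column $y_n^s$ carries no nonzero entry in the top ($M$) block of \eqref{eq:Sd}; the argument for full row rank $a+b$ of $(M,N)_{[[v,y_n^s]]}$ is instead that its column space contains that of the invertible stacked matrix $\left[\begin{smallmatrix}M\\N\end{smallmatrix}\right]$ (sum the first and middle column blocks). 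Finally, invoking the induction of Section \ref{sect:ind} here would run the logic backwards: that machinery is set up precisely for the \emph{non}-constant type functions, after the present proposition has cleared away the constant ones.
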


\begin{proof}
A straightforward direct check shows that each above-listed function attains its maximum possible value when evaluated at any $(M,N)\in S(\bd^*)$. 
\end{proof}

We refer to all remaining $\mathscr{U}_v$, $\mathscr{L}_v$, and $\mathscr{B}_{v,w}$ as \emph{non-constant type functions}, despite having not yet proven that they are indeed non-constant on $S(\bd^*)$.

\subsection{An inductive approach to non-constant type functions}\label{sect:ind}
Fix $n\geq 2$ and let $\bd'$ denote the restriction of the dimension vector $\bd^*$ for $Q^*(n)$ to the subquiver $Q^*(n-1)$. 
In this section, we define a linear subvariety $R(\bd^*)\subseteq S(\bd^*)$ and show that the restriction of each function $\mathscr{B}_{v,w}$ of non-constant type from $S(\bd^*)$ to $R(\bd^*)$ is either a constant function (Lemma \ref{lem:toRn}, and marked in red in Figure \ref{fig:ranktable2}) or is equal to a corresponding non-constant type function on $S(\bd')$, up to a constant which depends on $\bd^*$, $v$, and $w$ (Proposition \ref{prop:inductionSlice}). 

As a visual aid, Figure \ref{fig:exampleSlice} shows the slice along with the row and column labels for $n=2$.
Figure \ref{fig:exampleSlice2} shows the passage to $R(\bd^*)$ (Lemma \ref{lem:toRn}) then $\im(\eta)$ (Lemma \ref{lem:specSlice}) by successively fixing the values of a subset of the non-constant type $\mathscr{U}_v, \mathscr{L}_v, \mathscr{B}_{v,w}$.

\begin{figure}
\[
S(\bd^*): \qquad \begin{blockarray}{cccccccccccccccc}
         &  x_2 & x_1 & x_0 & x_0^s & x_1^s & x_2^s & y_2 & y_1 & y_0 + y'_0 & y_1^s & y_2^s  \\
      \begin{block}{c[ccccccccccccccc]}
y_0 & \cdot & \cdot & \cdot & * & * & * & \cdot & \cdot &  J \qquad \cdot   & \cdot & \cdot \\
y_1 & \cdot & \cdot & \cdot & * & * & * & \cdot & J &\cdot \qquad \cdot & \cdot & \cdot \\
y_2 & \cdot & \cdot & \cdot & * & * & * & J & \cdot & \cdot \qquad \cdot & \cdot & \cdot \\
x_0 & \cdot & \cdot & J & \cdot & \cdot & \cdot & \cdot & \cdot & \cdot \qquad \cdot & \cdot & \cdot \\
x_1 & \cdot & J & \cdot & \cdot & \cdot & \cdot & \cdot & \cdot & \cdot \qquad \cdot & \cdot & \cdot \\
x_2 & J & \cdot & \cdot & \cdot & \cdot & \cdot & \cdot & \cdot & \cdot \qquad \cdot & \cdot & \cdot \\
\cline{1-15}
y'_0 & \cdot & \cdot & \cdot & \star & \star & \star & \cdot & \cdot & \cdot \qquad 1 & \cdot & \cdot \\
y_1 & \cdot & \cdot & \cdot & \star & \star & \star & \cdot & J & \cdot \qquad \cdot & 1 & \cdot \\
y_2 & \cdot & \cdot & \cdot & \star & \star & \star & J & \cdot & \cdot \qquad \cdot & \cdot & 1\\
x_0 & \cdot & \cdot & J & 1 & \cdot & \cdot & \cdot & \cdot & \cdot \qquad \cdot & \cdot & \cdot \\
x_1 & \cdot & J & \cdot & \cdot & 1 & \cdot & \cdot & \cdot & \cdot \qquad \cdot & \cdot & \cdot \\
x_2 & J & \cdot & \cdot & \cdot & \cdot & 1 & \cdot & \cdot & \cdot \qquad \cdot & \cdot & \cdot \\
      \end{block}
    \end{blockarray}    
\]
\caption{$S(\bd^*)$ for $n=2$}\label{fig:exampleSlice}
\end{figure}
\begin{figure}
{\small
\[
\begin{bmatrix}
\cdot & \cdot & \cdot & * & * & \red{0} & \cdot & \cdot & J \qquad \cdot   & \cdot & \cdot \\
\cdot & \cdot & \cdot & * & * & \red{M_{y_1,x_2^s}} & \cdot & J  & \cdot \qquad \cdot & \cdot & \cdot \\
\cdot & \cdot & \cdot & \red{0} & \red{0} & \red{M_{y_2,x_2^s}} & J  & \cdot & \cdot \qquad \cdot & \cdot & \cdot \\
\cdot & \cdot & J & \cdot & \cdot & \cdot & \cdot & \cdot & \cdot \qquad \cdot & \cdot & \cdot \\
\cdot & J & \cdot & \cdot & \cdot & \cdot & \cdot & \cdot & \cdot \qquad \cdot & \cdot & \cdot \\
J & \cdot & \cdot & \cdot & \cdot & \cdot & \cdot & \cdot & \cdot \qquad \cdot & \cdot & \cdot \\
\hline
\cdot & \cdot & \cdot & \star & \star & \red{0} & \cdot & \cdot & \cdot \qquad 1 & \cdot & \cdot \\
\cdot & \cdot & \cdot & \star & \star & \red{M_{y_1,x_2^s}} & \cdot & J  & \cdot \qquad \cdot & 1 & \cdot \\
\cdot & \cdot & \cdot & \red{0} & \red{0} & \red{M_{y_2,x_2^s}} & J & \cdot & \cdot \qquad \cdot & \cdot & 1\\
\cdot & \cdot & J & 1 & \cdot & \cdot & \cdot & \cdot & \cdot \qquad \cdot & \cdot  & \cdot \\
\cdot & J & \cdot & \cdot & 1 & \cdot & \cdot & \cdot & \cdot \qquad \cdot & \cdot  & \cdot \\
J & \cdot & \cdot & \cdot & \cdot & 1 & \cdot & \cdot & \cdot \qquad \cdot & \cdot  & \cdot \\
\end{bmatrix}
\quad
\begin{bmatrix}
\cdot & \cdot & \cdot & \red{V_{\beta_0}} & \red{V_{\alpha_1}} & \red{0} & \cdot & \cdot & J \qquad \cdot & \cdot & \cdot \\
\cdot & \cdot & \cdot & \red{0} & \red{V_{\beta_1}} & \red{V_{\alpha_2}} & \cdot & J & \cdot \qquad \cdot & \cdot & \cdot\\
\cdot & \cdot & \cdot & \red{0} & \red{0} & \red{V_{\beta_2}} & J & \cdot  & \cdot \qquad \cdot & \cdot & \cdot \\
\cdot & \cdot & J & \cdot & \cdot & \cdot & \cdot & \cdot & \cdot \qquad \cdot & \cdot & \cdot \\
\cdot & J & \cdot & \cdot & \cdot & \cdot & \cdot & \cdot & \cdot \qquad \cdot & \cdot & \cdot \\
J & \cdot & \cdot & \cdot & \cdot & \cdot & \cdot & \cdot & \cdot \qquad \cdot & \cdot & \cdot \\
\hline
\cdot & \cdot & \cdot & \red{0} & \red{V_{\alpha'_1}} & \red{0} & \cdot & \cdot & \cdot \qquad 1 & \cdot & \cdot \\
\cdot & \cdot & \cdot & \red{0} & \red{V_{\beta_1}} & \red{V_{\alpha_2}} & \cdot & J & \cdot \qquad \cdot & 1 & \cdot \\
\cdot & \cdot & \cdot & \red{0} & \red{0} & \red{V_{\beta_2}} & J & \cdot & \cdot \qquad \cdot & \cdot & 1\\
\cdot & \cdot & J & 1 & \cdot & \cdot & \cdot & \cdot & \cdot \qquad \cdot & \cdot & \cdot \\
\cdot & J & \cdot & \cdot & 1 & \cdot & \cdot & \cdot & \cdot \qquad \cdot & \cdot & \cdot \\
J & \cdot & \cdot & \cdot & \cdot & 1 & \cdot & \cdot & \cdot \qquad \cdot & \cdot & \cdot \\
\end{bmatrix}
\]
\[
\hfill R(\bd^*) \hspace{7cm} \im(\eta) \hfill
\]
}
\caption{The shapes of $R(\bd^*)$, and $\im \eta$ for $n=2$}\label{fig:exampleSlice2}
\end{figure}

\begin{definition}\label{def:Rn}
Let $R(\bd^*)\subseteq S(\bd^*)$ consist of those $(M,N)\in S(\bd^*)$ such that: 
\begin{enumerate}
\item[(R1)] $M_{v,x^s_{n}} = N_{v,x^s_{n}}$, $y_{1}\leq v\leq y_n$,
\item[(R2)] $M_{v,x_n^s} = 0$ for all $y_0\leq v\leq y_{n-2}$ and $N_{v,x_n^s} = 0$ for all $y_0'\leq v\leq y_{n-2}$,
\item[(R3)] $M_{y_n,w} = N_{y_n,w}$, for all $x_0^s\leq w\leq x^s_{n}$,
\item[(R4)] $M_{y_n,w} = N_{y_n,w}=0$, for all $x_0^s\leq w\leq x^s_{n-1}$. \hfill \qedhere
\end{enumerate}
\end{definition}
There is intentional redundancy in Definition \ref{def:Rn} to make it easy to check that $R(\bd^*)$ can be obtained from $S(\bd^*)$ by fixing the values of some of the image type functions.

Define $b_{v,w}:= \text{min}\{\mathscr{B}_{v,w}(M,N)\mid (M,N)\in S(\bd^*)\}$.

\begin{lemma}\label{lem:toRn}
Let $(M,N)\in S(\bd^*)$. Then $(M,N)\in R(\bd^*)$ if and only if $\mathscr{B}_{v,w}(M,N) = b_{v,w}$ for the following ordered pairs $(v,w)$:
\begin{enumerate}[(i)]
\item $(x_n,w)$, $y_0+y_0'\leq w\leq y_{n-1}^s$; 
\item $(x_n,w)$, $y_{n-1}\leq w\leq y_1$; 
\item $(v,y_{n-1}^s)$, $x_n\leq v\leq x_0$; and
\item $(v,y_{n-1}^s)$, $x_0^s\leq v\leq x^s_{n-1}$.
\end{enumerate}
\end{lemma}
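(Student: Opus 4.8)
The plan is to prove the equivalence by directly computing, for each of the listed pairs $(v,w)$, the restriction of the function $\mathscr{B}_{v,w}$ to the slice $S(\bd^*)$ and reading off when it attains its minimum value $b_{v,w}$. Fix such a pair and a general point $(M,N)\in S(\bd^*)$ written as in \eqref{eq:Sd}. First I would write the block matrix $(M,N)_{[[v,w]]}$ of \eqref{eq:splitMatrix} out explicitly, splitting its columns into the ``left copy'' indexed by the interval $[x_n,v]$ (appearing only in the $M$-rows), the ``middle copy'' indexed by $[x_n,v]$ (appearing only in the $N$-rows), and the ``shared tail'' indexed by $(v,w]$ (appearing in both). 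Its nonzero blocks are the antidiagonal blocks $J$ and identity blocks $1$ carried by the rows labelled $x_0,\dots,x_n$ and by the columns labelled $y_1,\dots,y_n$, $y_0+y_0'$, and $y_1^s,\dots$, together with the free parameters $M_{y_i,x_j^s}$ and $N_{y_i,x_j^s}$ of $S(\bd^*)$.

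Next I would compute $\rank(M,N)_{[[v,w]]}$ by iterated Schur complements in a fixed order: pivot first on the $J$-blocks in the $x$-rows and $x$-columns, then on the $\pm1$-blocks these create in the columns $x_0^s,\dots,x_{n-1}^s$, then on the identity blocks in those $y_1^s,\dots$ columns that lie in the tail, then on the $J$-blocks in the columns $y_1,\dots,y_n$, and finally on the invertible block in the $(y_0+y_0')$-column. Keeping track of the bookkeeping shows that this removes a core of a fixed dimension $c_{v,w}$ and leaves a residual matrix $R_{v,w}(M,N)$ each of whose blocks is a single free parameter $\pm M_{y_i,x_j^s}$ or $\pm N_{y_i,x_j^s}$ or a difference $N_{y_i,x_j^s}-M_{y_i,x_j^s}$; moreover the only parameters that survive into $R_{v,w}$ lie in the column $x_n^s$ when $v=x_n$ (cases (i) and (ii)), and in the row $y_n$ together with the columns $x_j^s$ with $x_j^s\le v$ when $v$ lies in the range of (iii) or (iv). Thus $\mathscr{B}_{v,w}(M,N)=c_{v,w}+\rank R_{v,w}(M,N)$, so $b_{v,w}=c_{v,w}$, and $\mathscr{B}_{v,w}(M,N)=b_{v,w}$ precisely when every block of $R_{v,w}(M,N)$ vanishes, which is an explicit system of linear equations among the free parameters.

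To finish, I would match these vanishing systems with the defining equations (R1)--(R4) of $R(\bd^*)$. The pair $(x_n,y_{n-1})$ from (ii) imposes exactly (R1) together with (R2), and the pair $(x_{n-1}^s,y_{n-1}^s)$ from (iv) imposes exactly (R3) together with (R4); hence the minimality of $\mathscr{B}_{v,w}$ at these two pairs alone is already equivalent to $(M,N)\in R(\bd^*)$. Every remaining pair in the four ranges imposes a system which --- as the explicit form of the corresponding residual shows --- is a consequence of (R1)--(R4), so it is automatically satisfied on $R(\bd^*)$ and adds no new constraint; this is the deliberate redundancy noted right after Definition \ref{def:Rn}. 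Combining the two directions yields: $(M,N)\in R(\bd^*)$ if and only if $\mathscr{B}_{v,w}(M,N)=b_{v,w}$ for all the listed pairs.

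I expect the main obstacle to be the bookkeeping in the second step --- carrying out the Schur-complement reductions uniformly in $n$ rather than just in the $n=2$ case of Figures \ref{fig:ranktable1}--\ref{fig:ranktable2}. The two facts needing care are: (a) the $\pm1$ knock-ons produced by the $x$-rows clean out precisely the columns $x_0^s,\dots,x_{n-1}^s$ and never $x_n^s$, because the $1$ in the $N$-block's $x_n$-row lying in column $x_n^s$ is consumed when one pivots away the $x_n$-block, while the analogous $1$'s in the columns $x_0^s,\dots,x_{n-1}^s$ survive; and (b) the pivots on the $y$- and $y^s$-columns use up all the $M$-rows indexed by $y$-vertices and all but the last of the corresponding $N$-rows, so that in cases (iii)--(iv) only the $y_n$-row of $N$, corrected by subtracting the $y_n$-row of $M$, survives into the residual. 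Cross-checking against Figure \ref{fig:exampleSlice2}, where the shapes of $R(\bd^*)$ and $\im\eta$ are drawn for $n=2$, provides a useful reality check for the general argument.
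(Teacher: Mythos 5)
Your proposal is correct and is essentially the paper's argument: both amount to direct rank computations of the matrices $(M,N)_{[[v,w]]}$ on the slice $S(\bd^*)$, identifying when each listed $\mathscr{B}_{v,w}$ attains its minimum and matching the resulting linear conditions with (R1)--(R4). The only difference is bookkeeping: the paper pairs (i)$\leftrightarrow$(R1), (iii)$\leftrightarrow$(R3) and then, assuming (R1) (resp.\ (R3)), pairs (ii)$\leftrightarrow$(R2) (resp.\ (iv)$\leftrightarrow$(R4)), whereas you extract (R1)--(R2) from the single pair $(x_n,y_{n-1})$ and (R3)--(R4) from $(x_{n-1}^s,y_{n-1}^s)$ and verify the remaining pairs are redundant on $R(\bd^*)$ --- both organizations are valid, and your pivoting observations (the consumed $1$ in column $x_n^s$, the surviving corrected $y_n$-row of $N$) are exactly the right direct checks.
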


\begin{proof}
A direct check shows (i) and (iii) of Lemma \ref{lem:toRn} hold 
if and only if $(M,N)$ satisfies (R1) and (R3) of Definition \ref{def:Rn} respectively.
Then, assuming that $(M,N)$ satisfies (R1) (respectively (R3)) of Definition \ref{def:Rn}, a direct check shows that (ii) (respectively (iv)) of Lemma \ref{lem:toRn} holds 
if and only if $(M,N)$ satisfies (R2) (respectively (R4)) of Definition \ref{def:Rn}.
\end{proof}

Given $(M,N)\in R(\bd^*)$, we let $(M', N')\in S({\bd'})$ be the pair of matrices obtained from $(M,N)$ by deleting all block rows and block columns indexed by $x_n, x_n^s, y_n, y_n^s$. 

\begin{proposition}\label{prop:inductionSlice}
Fix a dimension vector $\bd^*$ and pair $v< w$ in $\cC(n-1)$.  Then there exists a constant $\xi(\bd^*, v, w)$ such that for all $(M,N)\in R(\bd^*)$ we have
\begin{equation}\label{eq:inductionSlice}
\mathscr{B}_{v,w}(M,N) = \mathscr{B}_{v,w}(M',N') + \xi(\bd^*, v, w).
\end{equation}
where $\mathscr{B}_{v,w}$ denotes a function on $S(\bd^*)$ on the left of \eqref{eq:inductionSlice}, and a function on $S(\bd')$ on the right of \eqref{eq:inductionSlice}.
\end{proposition}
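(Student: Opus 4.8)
The plan is to compute $\mathscr{B}_{v,w}(M,N)=\rank (M,N)_{[[v,w]]}$ for $(M,N)\in R(\bd^*)$ by performing elementary column operations and then deleting isolated (anti-)identity blocks, until $(M,N)_{[[v,w]]}$ is reduced exactly to $(M',N')_{[[v,w]]}$; the total size of the deleted blocks will be a constant $\xi=\xi(\bd^*,v,w)$ depending only on $\bd^*,v,w$. Since column operations preserve rank and deleting an isolated invertible $k\times k$ block lowers it by $k$, this proves \eqref{eq:inductionSlice}. It is instructive to run this through the case $n=2$ using Figures~\ref{fig:exampleSlice} and~\ref{fig:exampleSlice2}.

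First I would record how $(M,N)_{[[v,w]]}$ differs from $(M',N')_{[[v,w]]}$. Passing from $\bd^*$ to $\bd'$ deletes the block rows indexed $x_n,y_n$ (in both $M$ and $N$) and the block columns indexed $x_n,x_n^s,y_n,y_n^s$. Among the latter, $y_n^s$ never occurs in $(M,N)_{[[v,w]]}$ since $w\le y_{n-1}^s<y_n^s$ in $\cC(n)$; the column $x_n$ is always \emph{split} (it is the first element of $\cC(n)$, so $x_n<v$); and $x_n^s$ and $y_n$ occur if and only if $w\ge y_{n-1}$, in which case they are both split (if $v\ge y_{n-1}$) or both \emph{shared} (if $v<y_{n-1}$). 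This gives the three cases of the argument. Throughout, the relations (R1)--(R4) of Definition~\ref{def:Rn} together with the rigid shape of $S(\bd^*)$ in~\eqref{eq:Sd} pin down all entries of the deleted block rows and columns.

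The reduction proceeds in two steps. \emph{Step 1 (block row and column $x_n$).} In $S(\bd^*)$ the block column $x_n$ of $M$ (resp. of $N$) has a single nonzero block, the anti-identity $J$ in block row $x_n$, and block row $x_n$ of $M$ is supported on that $J$ alone, while block row $x_n$ of $N$ has in addition the identity $1$ in column $x_n^s$. Using the $J$ in the ($N$-copy of the) column $x_n$ as a pivot, one column operation clears this $1$ off of column $x_n^s$ (if that column occurs); afterwards the copies of block row and block column $x_n$ are isolated anti-identities, and deleting them lowers the rank by $2\bd^*(x_n)$. \emph{Step 2 (block row $y_n$ and columns $x_n^s,y_n$, when present).} By (R2) and (R4), block row $y_n$ of $M$ (and of $N$) has among the occurring columns only the blocks $M_{y_n,x_n^s}$ (free) and $M_{y_n,y_n}=J$ (and likewise for $N$, with $N_{y_n,x_n^s}=M_{y_n,x_n^s}$ and $N_{y_n,y_n}=J$ by (R1),(R3)); and by (R1),(R2) the block column $x_n^s$ is supported, after Step~1, only on $M_{y_{n-1},x_n^s}$ and $M_{y_n,x_n^s}$, appearing identically in the $M$- and $N$-parts. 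Since column $x_n^s$ occurs exactly when columns $y_{n-1}$ and $y_n$ do, with the same split/shared status, the pivot anti-identities $M_{y_n,y_n},N_{y_n,y_n},M_{y_{n-1},y_{n-1}},N_{y_{n-1},y_{n-1}}$ in columns $y_n$ and $y_{n-1}$ let one clear column $x_n^s$ entirely --- the entries forced equal by (R1)/(R3) cancel against the pivot contributions in the $M$- and $N$-parts simultaneously. After this, column $x_n^s$ is zero, and block row $y_n$ and block column $y_n$ form isolated anti-identity blocks; deleting column $x_n^s$, block row $y_n$, and block column $y_n$ lowers the rank by a further $2\bd^*(y_n)$ in the split case, by $\bd^*(y_n)$ in the shared case, and by $0$ when these columns do not occur. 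What remains is exactly $(M',N')_{[[v,w]]}$ (the surviving block rows and columns inherit the orderings of $\cC(n-1)$ and of the row labels of $M',N'$, so no permutation is needed), and $\xi(\bd^*,v,w)$ is the sum of the block sizes deleted along the way.

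I expect the main obstacle to be the organization of Step~2 rather than any individual computation: keeping track, in each of the three cases, of which block appears as a split versus a shared column, and of the precise value of $\xi$, is delicate, and it is exactly here that the built-in redundancy of (R1)--(R4) is needed so that every free entry lying in a deleted row or column is cancellable against a structural $J$. As the surrounding text notes, a direct check of the $n=2$ case from Figures~\ref{fig:exampleSlice} and~\ref{fig:exampleSlice2} already displays all of these phenomena.
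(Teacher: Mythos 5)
Your proposal is correct and follows essentially the same route as the paper's proof: after splitting into the same three cases (according to whether the columns $x_n^s,y_n$ occur and are split or shared), you clear the free blocks $M_{y_{n-1},x_n^s}$ and $M_{y_n,x_n^s}$ (and their $N$-copies, equal by (R1)--(R4)) via column operations against the $J$ pivots in the $y_{n-1}$ and $y_n$ columns, and then recognize the result as $(M',N')_{[[v,w]]}$ plus isolated invertible and zero blocks whose total rank is the constant $\xi(\bd^*,v,w)$. Your treatment is only slightly more explicit than the paper's (e.g.\ the preliminary clearing of the identity block in row $x_n$, column $x_n^s$ of $N$, and the exact values of $\xi$ in each case), but it is the same argument.
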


\begin{proof}
Let $(M,N)\in R(\bd^*)$ and label blocks $M_{y_{n-1},x_n^s}$, $N_{y_{n-1},x_n^s}$ (which are equal to one another) by $V_{\alpha_n}$, and label blocks $M_{y_n,x_n^s}, N_{y_n,x_n^s}$ by $V_{\beta_n}$. 

If $w\leq x_{n-1}^s$ then the result is easy to check. 
The remaining cases are (i) $w\geq y_{n-1}$, $v\leq x_{n-1}^s$ and (ii) $w\geq y_{n-2}$, $v\geq y_{n-1}$. 
In each of case, use the $J$s originating from the $y_n$ and $y_{n-1}$ block columns of $(M,N)$ to clear all appearances  of $V_{\alpha_n}$ and $V_{\beta_n}$  in $(M,N)_{[[v,w]]}$ using column operations. Then transform the resulting matrix, by swapping the orders of some rows and columns, to the direct sum of $(M',N')_{[[v,w]]}$ with identity and zero matrices.

Since the ranks of two matrices which are row or column equivalent are the same, and $\mathscr{B}_{v,w}(M,N)$ is the rank of the matrix $(M,N)_{[[v,w]]}$, the result follows.
\end{proof}

\subsection{Image type functions}\label{sect:specializeTheSlice}
It is immediate from the zig-zag shapes of $A_V$ and $B_V$ in \eqref{eq:M}, and the definition of $\eta$, that $\im(\eta)$ consists of those $(M,N)\in S(\bd^*)$ satisfying:
\begin{enumerate}
\item[(I1)] (part of $M$ is equal to part of $N$) $M_{v,w} = N_{v,w}$, for $y_1\leq v\leq y_n$, $x_0^s\leq w\leq x_n^s$,
\item[(I2)] (entries below a zig-zag are zero) $M_{y_i, x_j^s}=0$ and $N_{y_i, x_j^s}=0$, for $1\leq j<i\leq n$,
\item[(I3)] (entries above a zig-zag are zero) $M_{y_i, x_j^s}=0 $ and $N_{y_i, x_j^s}=0$, for $3\leq i+2\leq j\leq n$, and $M_{y_0, x_i^s}=0$ and $N_{y_0', x_i^s}=0$, for $i\geq 2$,
\item[(I4)] (part of the $x_0^s$ column of $N$ is zero) $N_{y_0',x_0^s} = 0$ and $N_{y_i,x_0^s} = 0$, for $i\geq 1$.
\end{enumerate}
An example of $\im \eta$ for the case $n=2$ is seen on the right of Figure \ref{fig:exampleSlice2}.

The following lemma characterizes those functions $\mathscr{U}_{v}$, $\mathscr{L}_v$, and $\mathscr{B}_{v,w}$ whose values we fix to cut out $\im(\eta)$. These are the image type functions. 
As above, let $b_{v,w}$ be the minimum value of $\mathscr{B}_{v,w}$ attained on $S(\bd^*)$. 
Note also that $d^*_x$ is the minimum value of $\mathscr{L}_{x_0^s}$ attained on $S(\bd^*)$.
\begin{lemma}\label{lem:specSlice}
Let $(M,N)\in S(\bd^*)$.  
Then $(M,N)\in \im(\eta)$ if and only if $\mathscr{L}_{x_0^s}(M,N) = d^*_x$ and $\mathscr{B}_{v,w}(M,N) = b_{v,w}$ for the following ordered pairs $(v,w)$:
\begin{enumerate}[(i)]
\item $(v, w)$, $x_n\leq v\leq x_0$, $y_0+y_0'\leq w\leq y^s_{n-1}$;
\item $(x_0^s, w)$, $y_0+y_0'\leq w\leq y_{n-1}^s$, and $(x_i^s, y_j^s)$, $1\leq i \leq j\leq n-1$; and
\item $(x_i,y_j)$, $1\leq j < i\leq n$.
\end{enumerate}
\end{lemma}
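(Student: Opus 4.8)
The plan is to prove the two implications separately, treating $S(\bd^*)$ as the affine space whose coordinates are the entries of the free blocks $M_{v,w}$ (for $y_0\le v\le y_n$, $x_0^s\le w\le x_n^s$) and $N_{v,w}$ (for $y_0'\le v\le y_n$, $x_0^s\le w\le x_n^s$), and recalling that $\im(\eta)$ is the linear subspace of $S(\bd^*)$ cut out by (I1)--(I4). For the forward implication, if $(M,N)=\eta(V)$ then each submatrix $N_{[x_n,x_0^s]}$ and $(M,N)_{[[v,w]]}$ occurring in the statement is assembled from full-rank antidiagonal blocks $J$ (and identity blocks) together with the zig-zag blocks $V_{\za_i},V_{\zb_i}$ of $A_V$ and $B_V$; since (I2)--(I4) kill all entries off the zig-zags, every column of such a submatrix either vanishes or lies in the span of the $J$-columns, so the rank equals exactly the contribution of the $J$'s. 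A direct computation shows this common value is the minimum of the corresponding function over all of $S(\bd^*)$ (the same $J$'s sit inside every element of $S(\bd^*)$ and furnish a matching lower bound), and likewise $\mathscr{L}_{x_0^s}(\eta(V))=d^*_x$. This settles the forward implication.

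For the reverse implication I will induct on $n$, using the apparatus of Section \ref{sect:ind}. The combinatorial core is that the sublist of the conditions in the statement consisting of those pairs $(v,w)$ with $v=x_n$ or with $w=y_{n-1}^s$ is exactly the list in Lemma \ref{lem:toRn}, so by that lemma these conditions cut $S(\bd^*)$ down to $R(\bd^*)$; assume therefore $(M,N)\in R(\bd^*)$. Deleting the removed pairs, the remaining $\mathscr{B}$-conditions run over exactly the pairs appearing in Lemma \ref{lem:specSlice} at level $n-1$; by Proposition \ref{prop:inductionSlice} each such $\mathscr{B}_{v,w}(M,N)$ equals $\mathscr{B}_{v,w}(M',N')$ up to the additive constant $\xi(\bd^*,v,w)$, and since $\im(\eta)\subseteq R(\bd^*)$ realizes the minimum of every function in sight (forward implication at both levels, using that restriction carries $\eta(V)$ at level $n$ to $\eta(V|_{Q^*(n-1)})$ at level $n-1$), these conditions translate precisely to the level $n-1$ conditions for $(M',N')$. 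The lone function $\mathscr{L}_{x_0^s}$ is handled directly: on $R(\bd^*)$ deleting the $x_n$ column-block of $N$ removes a $J$ of size $\bd^*(x_n)$, so $\mathscr{L}_{x_0^s}(M,N)=\mathscr{L}_{x_0^s}(M',N')+\bd^*(x_n)$ and the condition becomes its level $n-1$ analogue. By the inductive hypothesis, $(M',N')$ then satisfies (I1)--(I4) at level $n-1$.

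To close the induction one checks that (I1)--(I4) for $(M,N)$ at level $n$ is equivalent to the conjunction of: (a) $(M,N)\in R(\bd^*)$; (b) $N_{v,x_0^s}=0$ for all $y_0'\le v\le y_n$ (the consequence of $\mathscr{L}_{x_0^s}(M,N)=d^*_x$, read off from $N_{[x_n,x_0^s]}$ after column-reducing by the block $J$ in its $x$-rows); and (c) $(M',N')$ satisfies (I1)--(I4) at level $n-1$. Indeed, those equations among (I1)--(I4) that involve a block index in $\{x_n,x_n^s,y_n,y_n^s\}$ are exactly (R1)--(R4), supplemented by the relation $N_{y_n,x_0^s}=0$ coming from (b); the remaining equations are precisely (I1)--(I4) at level $n-1$ under the identification $(M,N)\mapsto(M',N')$. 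Combining this with the previous paragraph completes the inductive step, and the base case $n=1$ (or $n=2$, tabulated in Figures \ref{fig:ranktable1} and \ref{fig:ranktable2}) is a short direct computation.

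The step I expect to be the main obstacle is the bookkeeping of the last paragraph: confirming that the equations (I1)--(I4) at level $n$ split cleanly into the ``new'' relations (namely (R1)--(R4) together with the consequence of $\mathscr{L}_{x_0^s}=d^*_x$) and the level $n-1$ relations for $(M',N')$, with no equation lost or double-counted when the block rows and columns indexed by $x_n,x_n^s,y_n,y_n^s$ are deleted. A secondary difficulty, but one already absorbed into Lemma \ref{lem:toRn}, is that the rank conditions must be imposed in the right order for the $J$-blocks to control the ranks; passing to $R(\bd^*)$ at the outset --- legitimate since $R(\bd^*)\supseteq\im(\eta)$ is already established --- and then invoking Proposition \ref{prop:inductionSlice} avoids having to track that order by hand.
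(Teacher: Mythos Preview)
Your proposal is correct and follows essentially the same strategy as the paper's proof: both induct on $n$, use Lemma~\ref{lem:toRn} to identify the sublist of conditions that cut $S(\bd^*)$ down to $R(\bd^*)$, and then invoke Proposition~\ref{prop:inductionSlice} to reduce the remaining $\mathscr{B}$-conditions to their level-$(n-1)$ analogues for $(M',N')$. Your key structural observation---that $(M,N)\in\im(\eta)$ is equivalent to $(M,N)\in R(\bd^*)$, the condition from $\mathscr{L}_{x_0^s}$ (i.e.\ (I4)), and $(M',N')\in\im(\eta')$---is exactly the decomposition the paper states and uses. The only organizational difference is that you handle the forward implication by a direct rank argument, whereas the paper folds both directions into the induction; this is harmless since the forward direction is indeed a straightforward check once one sees that the $J$-blocks furnish the minimum rank and (I2)--(I4) forbid any additional contribution.
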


\begin{proof}
We prove the lemma by induction on $n$.
Let $n = 1$ and fix a dimension vector $\bd^*$ for $Q^*(1)$. Each $(M,N)\in S(\bd^*)$ has the form:
\begin{equation}\label{eq:sliceSpecialization}
  \begin{blockarray}{cccccccc}
         & x_1 & x_0 & x_0^s & x_1^s & y_1 & y_0 + y'_0 & y_1^s \\
      \begin{block}{c[ccccccc]}
y_0 & \cdot & \cdot & *_1& *_3 & \cdot & J\qquad \cdot  &\cdot\\
y_1 & \cdot & \cdot & *_2 & *_4 & J & \cdot \qquad \cdot & \cdot\\
x_0 & \cdot & J & \cdot & \cdot & \cdot & \cdot \qquad \cdot & \cdot \\
x_1 & J & \cdot & \cdot & \cdot & \cdot & \cdot \qquad \cdot & \cdot\\
\cline{1-8}
y'_0  & \cdot & \cdot &\star_1 & \star_3 & \cdot & \cdot \qquad 1 & \cdot\\
y_1 & \cdot & \cdot & \star_2 & \star_4 & J & \cdot \qquad \cdot &1 \\
x_0 & \cdot & J & 1 & \cdot & \cdot & \cdot \qquad \cdot & \cdot \\
x_1 & J & \cdot & \cdot & 1 & \cdot & \cdot \qquad \cdot & \cdot \\
      \end{block}
    \end{blockarray}
    \end{equation}
There are two functions $\mathscr{B}_{v,w}$ arising from (i) in the lemma statement, namely $\mathscr{B}_{x_1,y_0+y_0'}$ and $\mathscr{B}_{x_0,y_0+y_0'}$. A direct check shows that $\mathscr{B}_{x_1,y_0+y_0'}(M,N) = b_{x_1,y_0+y_0'}$ 
 if and only if $*_4 = \star_4$, and $\mathscr{B}_{x_0,y_0+y_0'}(M,N) = b_{x_0,y_0+y_0'}$ if and only if both $*_4 = \star_4$ and $*_2 = \star_2$ (i.e. condition (I1) holds). There is one function arising from (ii) of the lemma statement, namely $\mathscr{B}_{x_0^s,y_0+y_0'}$. Assuming that (I1) holds, $\mathscr{B}_{x_0^s,y_0+y_0'}(M,N) = b_{x_0^s,y_0+y_0'}$  if and only if $*_2 = \star_2 = 0$ (i.e. (I2) holds). Condition (iii) of the present lemma and (I3) are vacuous. 
 Finally, $\mathscr{L}_{x_0^s}(M,N) = d_x^*$ if and only if $\star_1=0$ and $\star_2 = 0$ (i.e. (I4) holds). 
This completes the proof of the base case. 

Now assume that $n\geq 2$, fix a dimension vector $\bd^*$ for $Q^*(n)$, and let $(M,N)\in S(\bd^*)$. 
Let $\bd'$ be the dimension vector obtained by restricting $\bd^*$ to the subquiver $Q^*({n-1})$, and $\eta'\colon \rep_{Q^*(n-1)}(\bd')\rightarrow S(\bd')$ the closed embedding of \eqref{eq:defEta} for this smaller quiver and dimension vector.  
Let $(M',N')\in S(\bd')$ be obtained from $(M,N)$ by removing all rows and columns indexed by $x_n, x_n^s, y_n, y_n^s$. 
We can directly observe that $(M,N) \in \im(\eta)$ if and only if (I4) holds, $(M,N) \in R(\bd^*)$, and $(M',N') \in \im(\eta')$. So, we will prove that (I4) holds, $(M,N) \in R(\bd^*)$, and $(M',N') \in \im(\eta')$ if and only if all conditions listed in the present lemma hold.

We begin with the forward direction, and assume that (I4) holds, $(M,N) \in R(\bd^*)$, and $(M',N') \in \im(\eta')$. Since (I4) holds, we have by an easy direct check that $\mathscr{L}_{x_0^s}(M,N) = d_x^*$. Since $(M,N) \in R(\bd^*)$, we have that (i) through (iv) of Lemma \ref{lem:toRn} hold. 
The remaining conditions in the list from the present lemma are: 
\begin{enumerate}[(i)]
\item $\mathscr{B}_{v, w}(M,N) = b_{v,w}$, $x_{n-1}\leq v\leq x_0, y_0+y_0'\leq w\leq y^s_{n-2}$;
\item $\mathscr{B}_{x_0^s, w}(M,N) = b_{x_0^s,w}$, $y_0+y_0'\leq w\leq y_{n-2}^s$, and $\mathscr{B}_{x_i^s, y_j^s}(M,N)\! =\! b_{x_i^s, y_j^s}$, $1\leq i \leq j\leq n-2$;
\item $\mathscr{B}_{x_i,y_j}(M,N) = b_{x_i,y_j}$, $1\leq j < i\leq n-1$.
\end{enumerate}
Since $(M',N')\in \im(\eta')$, the induction hypothesis implies that these conditions hold upon replacing $(M,N)$ in the above list by $(M',N')$. Then, since $(M,N)\in R(\bd^*)$, Proposition \ref{prop:inductionSlice} implies that these conditions also hold for $(M,N)$. The proof of the converse is similar.
\end{proof}

\subsection{Quiver type functions}\label{sect:remainingRanks}
We now consider all remaining functions $\mathscr{U}_v, \mathscr{L}_v, \mathscr{B}_{v,w}$ from Theorem \ref{thm:Porbitranks} (i.e. those which are neither of constant type nor of image type). These are the quiver type functions. 
Let $\mathcal{E}_n$ denote the set of restrictions of quiver type functions to $\im(\eta)$, and recall $\mathcal{D}_n$ from \eqref{eq:distCollection}.
In this section, we will show that there is a surjection $\phi: \mathcal{E}_n \rightarrow \mathcal{D}_n$ such that for each $f\in \mathcal{E}_n$, the difference $f\circ \eta -\phi(f)$ is a constant function on $\rep_{Q^*(n)}(\bd^*)$. We will then prove Proposition \ref{prop:eta1Orbit}, making use of this surjection.

If $f\in \mathcal{E}_n$ and $g\in \mathcal{D}_n$, we use the shorthand $f\sim g$ to mean that $f\circ \eta-g$ is a constant function on $\rep_{Q^*(n)}(\bd^*)$, in which case we say that $f$ and $g$ are \emph{equivalent functions} on $\rep_{Q^*(n)}(\bd^*)$. We also abuse notation and write $\mathscr{U}_v$ (respectively $\mathscr{L}_v$, $\mathscr{B}_{v,w}$) for $\mathscr{U}_v|_{\im(\eta)}$ (respectively $\mathscr{L}_v|_{\im(\eta)}$, $\mathscr{B}_{v,w}|_{\im(\eta)}$) in Lemmas \ref{lem:quiverConditionsOnSlice1p} and \ref{lem:quiverConditionsOnSlice2} below.

The following lemma covers all quiver type functions of the form $\mathscr{U}_v$ and $\mathscr{L}_v$ in $\mathcal{E}_n$, and follows from direct observation (see the right matrix of Figure \ref{fig:exampleSlice2} for the $n=2$ example).

\begin{lemma}\label{lem:quiverConditionsOnSlice1p}
We have the following equivalences of functions:
\begin{enumerate}[(i)]
\item $\mathscr{U}_{x_i^s}\sim |\beta_0,\beta_i|$, $0\leq i\leq n$, and $\mathscr{U}_{y_i}\sim |\beta_0,\alpha_i|$, $1\leq i\leq n$,
\item $\mathscr{L}_{x_i^s}\sim |\alpha_1',\beta_i|$, $1\leq i\leq n$, $\mathscr{L}_{y_1}\sim |\alpha_1',\alpha_1'|$, and $\mathscr{L}_{y_i}\sim |\alpha_1',\alpha_i|$, $2\leq i\leq n$.
\end{enumerate}
\end{lemma}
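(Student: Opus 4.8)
The plan is to prove Lemma~\ref{lem:quiverConditionsOnSlice1p} by computing the ranks of the relevant submatrices of $\eta(V)$ directly, exploiting the many identity blocks ($J$'s and $1$'s) present in the slice to reduce each rank computation to the rank of an evaluated zig-zag matrix.

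\medskip

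\textbf{Setup.} Fix $V \in \rep_{Q^*(n)}(\bd^*)$ and write $M$ (resp. $N$) for the top (resp. bottom) block of $\eta(V)$ as in \eqref{eq:defEta}, so that $M$ has block rows labeled $(y_0) \vartriangleleft y \vartriangleleft x$ and $N$ has block rows labeled $(y_0') \vartriangleleft y \vartriangleleft x$, with both sharing the column labels $\cC(n) = \tilde x \vartriangleleft x^s \vartriangleleft \tilde y \vartriangleleft (y_0+y_0') \vartriangleleft y^s$. By definition of $\mathscr{U}_v$ and $\mathscr{L}_v$, I need $\rank M_{[x_n, v]}$ and $\rank N_{[x_n, v]}$. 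The point is that the block columns labeled by $\tilde x = (x_n, \dots, x_0)$ contribute a full-rank ``staircase'' of $J$'s in the $x$-block rows of both $M$ and $N$ (the lower-left $J$'s of \eqref{eq:defEta}), while the only nonzero blocks in the $x^s$ columns of $M$ lie in the $y_0 \vartriangleleft y$ block rows and form exactly $A_V$; similarly the $x^s$ columns of $N$ restricted to the $y_0' \vartriangleleft y$ rows form $B'_V = [0\ \ B_V]$.

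\medskip

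\textbf{Key steps.} First, for $v$ of the form $x_i^s$: the submatrix $M_{[x_n, x_i^s]}$ consists of the block columns $x_n, \dots, x_0$ (all of $\tilde x$) together with $x_0^s, \dots, x_i^s$. Row-reduce using the $d^*_x \times d^*_x$ block of $J$'s sitting in the $x$-rows against the $\tilde x$-columns: this block is invertible, so it clears the $\tilde x$-columns entirely and contributes exactly $d^*_x$ to the rank, while leaving, in the $y_0 \vartriangleleft y$ rows and $x_0^s, \dots, x_i^s$ columns, precisely the evaluation of the leftmost block columns of $A$, i.e. the submatrix $[\beta_0, \beta_i]_V$ (up to reordering of rows/columns and deleting zero rows, since the tail of the $x$-staircase in the $x^s$ columns is $1$'s/zeros that get absorbed). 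Hence $\mathscr{U}_{x_i^s}(\eta(V)) = d^*_x + \rank [\beta_0,\beta_i]_V = |\beta_0,\beta_i|(V) + (\text{const})$, giving $\mathscr{U}_{x_i^s} \sim |\beta_0, \beta_i|$. The case $v = y_i$ is identical except that now the columns $x_0^s, \dots, x_n^s$ are all included along with $\tilde y$-columns down to $y_i$; after clearing the $x$-staircase and then the $J$-staircase coming from the $\tilde y$ columns against the $y$-rows of $M$, what survives in the $y_0$ row is the top row of $A_V$ extended through the $\alpha_i$ entry, i.e. $[\beta_0, \alpha_i]_V$; so $\mathscr{U}_{y_i} \sim |\beta_0, \alpha_i|$. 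For the $\mathscr{L}$ statements one runs the identical argument on $N$: the $x^s$-columns of $N$ in the $y_0' \vartriangleleft y$ rows give $B'_V = [\,0\ \ B_V\,]$, the leading $0$ block occupying the $x_0^s$ column; clearing the $x$- and $y$-staircases leaves the submatrix of $B_V$ from column $x_1^s$ through $x_i^s$ in the $y_0'\vartriangleleft y$ rows, which is $[\alpha_1', \beta_i]_V$ for $i \geq 1$, and for $v = y_1$ leaves exactly the $(y_0', x_1^s)$ entry $\alpha_1'$, i.e. $[\alpha_1', \alpha_1']_V$; for $v = y_i$ with $i \geq 2$ it leaves the first row of $B_V$ out to $\alpha_i$, i.e. $[\alpha_1', \alpha_i]_V$. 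This yields all of (ii).

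\medskip

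\textbf{Anticipated obstacle.} The substantive content is entirely bookkeeping: keeping track of which block rows and columns survive after the staircase reductions, and verifying that the surviving submatrix is \emph{exactly} the claimed evaluated zig-zag matrix $[\gamma, \delta]_V$ rather than something off by a row or column. The $x_0^s$ column of $N$ is the one place to be careful, since the $\beta_0$-entry of $A$ has no counterpart in $B$ (that column of $B'_V$ is the zero block), which is precisely why $\mathscr{L}_{x_0^s}$ is the image-type function $|\alpha_1',\beta_0|$-analogue handled separately in Lemma~\ref{lem:specSlice} rather than appearing in this list, and why the list in (ii) starts at $x_1^s$. I expect no genuine difficulty beyond organizing the row/column permutations cleanly; the lemma is asserted in the paper to follow "from direct observation," and the above is the shape that direct observation takes. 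Since all the reductions are by invertible identity blocks, the additive constants depend only on $\bd^*$ (the total sizes $d^*_x$, $d^*_y$, and the sizes of the absorbed $1$-blocks), confirming the $\sim$ equivalences.
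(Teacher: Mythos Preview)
Your approach is correct and is precisely the ``direct observation'' the paper invokes (pointing to the $n=2$ picture in Figure~\ref{fig:exampleSlice2}): use the invertible $J$-blocks in the $\tilde{x}$ (and, when present, $\tilde{y}$) column blocks to reduce $M_{[x_n,v]}$ or $N_{[x_n,v]}$, leaving an evaluated zig-zag submatrix plus identity blocks that contribute only a $\bd^*$-dependent constant. One small slip in your narrative for $\mathscr{U}_{y_i}$ and $\mathscr{L}_{y_i}$: after clearing the $\tilde{y}$-staircase (which absorbs rows $y_i,\dots,y_n$), what survives is not just the single row $y_0$ (resp.\ $y_0'$) but all block rows $y_0,\dots,y_{i-1}$ (resp.\ $y_0',y_1,\dots,y_{i-1}$); this is exactly the row range of $[\beta_0,\alpha_i]$ (resp.\ $[\alpha_1',\alpha_i]$), so your stated conclusions are right even though the intermediate description undercounts the surviving rows.
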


The remaining elements of $\mathcal{E}_n$ (those of the form $\mathscr{B}_{v,w}$) all appear in the following lemma.

\begin{lemma}\label{lem:quiverConditionsOnSlice2}
We have the following equivalences of functions:
\begin{enumerate}[(i)]
\item $\mathscr{B}_{x_i,x_j^s}\sim |\alpha_i,\beta_j|$, $2\leq i\leq j\leq n$, $\mathscr{B}_{x_1,x_j^s}\sim || \za_1,\beta_j ||^0$, $1\leq j\leq n$, $\mathscr{B}_{x_0,x_0^s}\sim | \zb_0,\beta_0 |$, $\mathscr{B}_{x_0,x_j^s}\sim || \za_1,\beta_j ||$, $1\leq j\leq n$,

\item $\mathscr{B}_{x_i,y_j}\sim |\alpha_i,\alpha_j|$, $2\leq i\leq j\leq n$, $\mathscr{B}_{x_1,y_j}\sim || \za_1,\alpha_j ||^0$, $1\leq j\leq n$, $\mathscr{B}_{x_0,y_j}\sim ||\za_1,\alpha_j ||$, $1\leq j\leq n$,

\item $\mathscr{B}_{x_{i-1}^s,x_j^s}\sim ||\alpha_i,\beta_j ||$, $1\leq i\leq j\leq n$,

\item $\mathscr{B}_{x_{i-1}^s,y_j}\sim ||\alpha_i,\alpha_j ||$, $1\leq i\leq j\leq n$, $\mathscr{B}_{x_j^s,y_i}\sim ||\beta_i,\beta_j ||$, $1\leq i\leq j\leq n$,

\item $\mathscr{B}_{x_j^s,y_0+y_0'}\sim |\beta_1, \beta_j|$, $1\leq j\leq n$ and $\mathscr{B}_{x_j^s,y^s_{i-1}}\sim |\beta_i, \beta_j|$, $2\leq i\leq j\leq n$,
\item $\mathscr{B}_{y_j,y_i}\sim ||\beta_i,\alpha_j ||$, $1\leq i<j\leq n$,
\item $\mathscr{B}_{y_j,y_0+y_0'}\sim |\beta_1, \alpha_j|$, $2\leq j\leq n$ and $\mathscr{B}_{y_j,y^s_{i-1}}\sim |\beta_i, \alpha_j|$, $2\leq i<j\leq n$, 
\end{enumerate}
\end{lemma}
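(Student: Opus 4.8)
The plan is to prove Lemma~\ref{lem:quiverConditionsOnSlice2} by the same mechanism underlying the preceding lemmas: each statement asserts an equality, up to an additive constant, between the rank of a matrix of the form $(M,N)_{[[v,w]]}$ with $(M,N) = \eta(V)$, and the rank of one of the zig-zag matrices $[\gamma,\delta]_V$, $\llbracket\gamma,\delta\rrbracket_V$, or $\llbracket\alpha_1,\delta\rrbracket^0_V$. So the method in each case is: (1) write out $\eta(V)_{[[v,w]]}$ explicitly using the block description of $\eta(V)$ in \eqref{eq:defEta} and the definition of $(M,N)_{[[v,w]]}$ in \eqref{eq:splitMatrix}; (2) use the many identity and $J$ blocks coming from the $x_i$, $x_i^s$ block columns (for the row-side structure) and the $y_i$, $y_i^s$ block columns to perform row and column operations clearing out all the $J$'s and identity blocks, which contribute only a fixed integer to the rank and do not depend on $V$; (3) recognize the remaining essential submatrix, after reordering rows and columns, as exactly the evaluation at $V$ of the appropriate member of $\mathbf{D}_n$; (4) conclude the ranks differ by a constant depending only on $\bd^*$.

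First I would fix notation for $\eta(V)$: by \eqref{eq:defEta}, the second block column of $\eta(V)$ contains $A_V$ stacked over $B'_V = [0\ B_V]$, and all other nonzero blocks are single $J$'s or $1$'s placed so that each $x_i$ (resp.\ $x_i^s$) block column and each $y_i$ (resp.\ $y_i^s$) block column carries exactly one such invertible block. The key structural observation, which I would state once and reuse, is that for column labels $v,w$ drawn from $\tilde x \vartriangleleft x^s$ the matrix $\eta(V)_{[[v,w]]}$ restricted to those columns decomposes — after the identity/$J$ reductions — into a block essentially equal to $A_V$ or $B_V$ truncated to the relevant zig-zag window $[\gamma,\delta]$, where the correspondence between the column index $x_i^s$ (resp.\ the arrow $\beta_i$) and the zig-zag endpoint is read off from \eqref{eq:M}; similarly for $y^s$ columns one gets $\alpha_j$ endpoints. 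The ``split'' structure in $(M,N)_{[[v,w]]}$ — the left block of columns from $M$ only, versus the right block with both $M$ and $N$ — is precisely the operation \eqref{eq:splitMatrix} that defines $\llbracket\gamma,\delta\rrbracket$, which is why the doubled matrices $\llbracket\gamma,\delta\rrbracket$ (and the variant $\llbracket\alpha_1,\delta\rrbracket^0$, accounting for the column of zeros that (I4) forces in the $x_0^s$ column of $N$) show up exactly when $v$ is an $x_i$ with $i\le 1$ versus $i\ge 2$. I would organize the proof by the seven cases (i)--(vii), and within (i) and (ii) by the three subcases ($i\ge 2$; $i=1$; $i=0$), in each case doing the reduction explicitly enough to exhibit the essential submatrix but deferring the routine bookkeeping of which $J$'s get cleared. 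An induction on $n$ analogous to Lemma~\ref{lem:specSlice}, using Proposition~\ref{prop:inductionSlice} and the passage to $R(\bd^*)$, would let me reduce cases (iii)--(vii) — which involve $v = x_j^s$ or $v = y_j$ and whose $\llbracket\cdot,\cdot\rrbracket$-values are transpose-type — to small base cases plus the inductive shift by $\xi(\bd^*,v,w)$.

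The main obstacle I expect is bookkeeping rather than conceptual: correctly matching each pair $(v,w)$ in $\cC(n)$ to the correct zig-zag endpoints $(\gamma,\delta)$ and the correct ``doubled'' versus ``single'' versus ``$\phantom{x}^0$'' variant, and verifying that the additive constant really is independent of $V$. The subtlety flagged in the text — that the partition of the $\mathscr{B}_{v,w}$ into constant/image/quiver types is not monotone and is order-sensitive — means I must be careful that when I invoke $(M,N)\in\im(\eta)$ I only use the defining conditions (I1)--(I4), and that the row/column operations I perform to clear $J$ blocks do not secretly use a block whose value has not yet been constrained. Concretely the trickiest reductions are those in (iv)--(vii) where $v$ is a $y_j$ row index and the relevant window straddles the horizontal dividing line of \eqref{eq:Sd}: there the transpose in the definition of $\llbracket\beta_i,\delta\rrbracket$ must be tracked, and I would handle these by literally transposing the extracted submatrix and citing the $\beta$-chain of the partial order on $Q^*_1$. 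Once the essential submatrix is identified with $\overline\Phi_V$ for the matching $\Phi\in\mathbf{D}_n$, the equivalence $f\sim g$ follows since $f\circ\eta$ and $g$ then differ by the fixed rank contribution of the cleared identity/$J$ blocks, completing the proof.
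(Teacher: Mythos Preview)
Your proposal is correct in substance and uses exactly the same two ingredients as the paper: (a) clearing the $J$ and identity blocks in $\eta(V)_{[[v,w]]}$ by row and column operations until only a zig-zag submatrix remains, and (b) an induction on $n$ via Proposition~\ref{prop:inductionSlice}. The difference is organizational. You partition by the seven items (i)--(vii), handling (i)--(ii) by direct reduction for each subcase $i\ge 2$, $i=1$, $i=0$, and proposing induction only for (iii)--(vii). The paper instead partitions \emph{uniformly} by whether either of $v,w$ equals $x_n,x_n^s,y_n,y_n^s$: if neither does, then $(M',N')\in\im(\eta')$ for the subquiver $Q^*(n-1)$, the inductive hypothesis gives the equivalence for $(M',N')$, and Proposition~\ref{prop:inductionSlice} transfers it to $(M,N)$; if one of $v,w$ does involve the index $n$, the paper verifies the finitely many such $\mathscr{B}_{v,w}$ directly, tabulating the results. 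This organization is more efficient because the inductive step covers parts of \emph{all} seven items simultaneously, and the direct checks are confined to a bounded list (those with an $n$-index) rather than, as in your plan for (i)--(ii), a list whose length grows with $n$.

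There is also a small gap in your outline: your proposed induction for (iii)--(vii) via Proposition~\ref{prop:inductionSlice} only applies when $v,w\in\cC(n-1)$, i.e.\ when neither index is $x_n,x_n^s,y_n,y_n^s$. Cases like $\mathscr{B}_{x_{n-1}^s,x_n^s}$, $\mathscr{B}_{x_n^s,y_i}$, $\mathscr{B}_{y_n,y_i}$, $\mathscr{B}_{x_j^s,y_{n-1}^s}$ fall outside the inductive reach and still require the direct block-clearing verification you describe for (i)--(ii). Your phrase ``small base cases'' may have been meant to cover these, but it should be made explicit: the induction reduces to $n=1$ \emph{and} the index-$n$ boundary cases at each step, exactly as the paper's proof does.
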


\begin{proof}
We induct on $n$, with $n=1$ being easy to check. 
Let $n\geq 2$, $\bd^*$ a dimension vector for $Q^*(n)$, and $(M,N)\in \im(\eta)$. 
We consider the $\mathscr{B}_{v,w}$ appearing in the lemma statement (i.e. the elements of $\mathcal{E}_n$ of the form $\mathscr{B}_{v,w}$) in two separate cases, the first being where $v,w\neq x_n, x_n^s, y_n, y_n^s$ and the second being where at least one of $v,w$ is some $x_n, x_n^s, y_n, y_n^s$. 

To treat the first case, let $\bd'$ be the dimension vector obtained by restricting $\bd^*$ to $Q^*(n-1)$, let $(M',N')$ the matrix obtained by removing all rows and columns from $(M,N)$ indexed by $x_n,x_n^s, y_n, y_n^s$, and $\eta'\colon \rep_{Q^*(n-1)}(\bd')\rightarrow S(\bd')$ the closed embedding of \eqref{eq:defEta} for this smaller quiver and dimension vector.   
Observe that $(M',N')\in \im(\eta')$, so we may apply the induction hypothesis to see that, on $\rep_{Q^*(n-1)}(\bd')$, the claimed equivalences of functions hold. 
That they also hold on $\rep_{Q^*(n)}(\bd)$ follows from Proposition \ref{prop:inductionSlice}. 

To complete the proof, we consider all possible $\mathscr{B}_{v,w}$, restricted to $\im(\eta)$, when at least one of $v,w$ is $x_n, x_n^s, y_n, y_n^s$. 
We summarize the result for each case.  They can be systematically checked by drawing out the relevant matrices and clearing block rows and column using identity blocks. We first consider the possibilities for $v$, then those for $w$.
The functions of the form $\mathscr{B}_{x_n,w}$ are all constant except
\[
\mathscr{B}_{x_n,x_n^s} \sim |\za_n,\zb_n| \qquad \text{and} \qquad \mathscr{B}_{x_n,y_n}\sim |\za_n,\za_n|.
\]
Then we have equivalences for $v=x_n^s$ or $v=y_n$ in the table below.
\[
\begin{array}{c|c|c|c|c|c|c|c|c|c|c|}
w  & y_n & y_{n-1} & \cdots & y_1 & y_0 + y'_0 & \cdots & y^s_{n-2} & y^s_{n-1} & y^s_n\\
\hline
\mathscr{B}_{x_n^s,w} & ||\zb_n,\zb_n|| & ||\zb_{n-1},\zb_n || & \cdots & ||\zb_1,\zb_n ||& |\zb_1,\zb_n| & \cdots & |\zb_{n-1},\zb_n|& |\zb_n,\zb_n| & C \\
\hline
\mathscr{B}_{y_n,w} &  & ||\zb_{n-1},\za_n|| & \cdots & ||\zb_{1},\za_n||& |\zb_1,\za_n| & \cdots & |\zb_1,\za_{n-1}| & C & C \\
\hline
\end{array}
\]
There are no $\mathscr{B}_{v,w}$ where $v=y_n^s$ nor any where $w=x_n$.  This leaves us with three possibilities for fixed values of $w$.  We find equivalences
\[
\begin{array}{c|c|c|c|c|c|c|c|c|c|c|}
v  & x_n & x_{n-1} & \cdots & x_1 & x_0 & x_0^s & \cdots & x_{n-1}^s & x_n^s \\
\hline
\mathscr{B}_{v,x_n^s} & |\za_n,\zb_n| & |\za_{n-1},\zb_n| & \cdots & || \za_1,\zb_n||^0 & || \za_1,\zb_n||& ||\za_1,\zb_n|| &\cdots  & ||\za_n,\zb_n|| & \\
\hline
\mathscr{B}_{v,y_n} & |\za_n,\za_n| & |\za_{n-1},\za_n| & \cdots & || \za_1,\za_n||^0 & || \za_1,\za_n || & ||\za_1,\za_n|| &\cdots & ||\za_n,\za_n || & ||\zb_n,\zb_n||\\
\hline
\end{array},
\]
and the $\mathscr{B}_{v,w}$ where $w=y_n^s$ are all constant. 
\end{proof}

The upshot of Lemmas \ref{lem:quiverConditionsOnSlice1p} and \ref{lem:quiverConditionsOnSlice2} is the following:

\begin{lemma}\label{lem:upshot}
Let $V, W\in \rep_{Q^*(n)}(\bd^*)$. 
Then $f(\eta(V)) \leq f(\eta(W))$, for all $f\in \mathcal{E}_n$ if and only if $g(V) \leq g(W)$ for all $g\in \mathcal{D}_n$. 
Similarly, $f(\eta(V)) = f(\eta(W))$, for all $f\in \mathcal{E}_n$ if and only if $g(V) = g(W)$ for all $g\in \mathcal{D}_n$. 
\end{lemma}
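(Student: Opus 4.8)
The plan is to deduce the lemma directly from Lemmas \ref{lem:quiverConditionsOnSlice1p} and \ref{lem:quiverConditionsOnSlice2}, using the fact that the relation $\sim$ is compatible with the order $\leq$ and with equality up to an additive constant that does not depend on the representation. First I would observe that, by definition of $\mathcal{E}_n$ as the set of restrictions of quiver type functions to $\im(\eta)$, each $f \in \mathcal{E}_n$ arises as one of the functions $\mathscr{U}_v|_{\im(\eta)}$, $\mathscr{L}_v|_{\im(\eta)}$, or $\mathscr{B}_{v,w}|_{\im(\eta)}$ appearing in the statements of Lemmas \ref{lem:quiverConditionsOnSlice1p} and \ref{lem:quiverConditionsOnSlice2}. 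Taken together, those two lemmas exhibit, for every such $f$, a specific element $g = \phi(f) \in \mathcal{D}_n$ with $f \sim g$, i.e. $f \circ \eta - g$ is a constant function on $\rep_{Q^*(n)}(\bd^*)$. The first step is therefore to package this as a well-defined map $\phi \colon \mathcal{E}_n \to \mathcal{D}_n$ and check it is a bijection: surjectivity is immediate from inspecting the right-hand sides in Lemmas \ref{lem:quiverConditionsOnSlice1p} and \ref{lem:quiverConditionsOnSlice2}, which together hit every element of $\mathcal{D}_n$ as listed in \eqref{eq:distCollection}; injectivity follows since the index sets on the two sides match up in a one-to-one fashion once one accounts for the naming conventions ($|\cdot,\cdot|$, $||\cdot,\cdot||$, $||\cdot,\cdot||^0$).

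Granting the bijection $\phi$ with $f \sim \phi(f)$ for all $f \in \mathcal{E}_n$, the equivalence in the lemma is then a formal consequence. Indeed, fix $V, W \in \rep_{Q^*(n)}(\bd^*)$. For each $f \in \mathcal{E}_n$, write $c_f := f(\eta(U)) - \phi(f)(U)$, which by $f \sim \phi(f)$ is independent of the representation $U$. Then
\[
f(\eta(V)) - f(\eta(W)) = \bigl(\phi(f)(V) + c_f\bigr) - \bigl(\phi(f)(W) + c_f\bigr) = \phi(f)(V) - \phi(f)(W).
\]
Hence $f(\eta(V)) \leq f(\eta(W))$ for all $f \in \mathcal{E}_n$ if and only if $\phi(f)(V) \leq \phi(f)(W)$ for all $f \in \mathcal{E}_n$, and since $\phi$ is a bijection onto $\mathcal{D}_n$ this is equivalent to $g(V) \leq g(W)$ for all $g \in \mathcal{D}_n$. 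The argument for equality is identical, replacing $\leq$ by $=$ throughout.

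I expect the only real content to be the verification that $\phi$ is genuinely well-defined and bijective — that is, reconciling the bookkeeping in Lemmas \ref{lem:quiverConditionsOnSlice1p} and \ref{lem:quiverConditionsOnSlice2} with the description of $\mathcal{D}_n$ in \eqref{eq:distCollection}, making sure no element of $\mathcal{D}_n$ is missed and none is produced twice by inequivalent functions of $\mathcal{E}_n$. The potential subtlety is that several distinct pairs $(v,w)$ could a priori map to the same $g \in \mathcal{D}_n$; if so, one should note that $\mathcal{E}_n$ is defined as a \emph{set} of functions (so coincidences collapse) and that the remaining correspondence is still a bijection, or else simply phrase the conclusion without insisting on a bijection — one only needs that $\{\phi(f) : f \in \mathcal{E}_n\} = \mathcal{D}_n$ and $f \sim \phi(f)$, which already suffices for the displayed cancellation above to give both directions. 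The ordering subtleties emphasized in Section \ref{sect:Smirnov} (which functions are constant, and in what order they cut out $R(\bd^*)$ and $\im(\eta)$) do not re-enter here: they were already absorbed into the statements of Lemmas \ref{lem:quiverConditionsOnSlice1p} and \ref{lem:quiverConditionsOnSlice2}, which are taken as given.
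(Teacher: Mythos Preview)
Your proposal is correct and follows essentially the same approach as the paper's own proof: both identify that Lemmas \ref{lem:quiverConditionsOnSlice1p} and \ref{lem:quiverConditionsOnSlice2} exhaust $\mathcal{E}_n$ and hit exactly $\mathcal{D}_n$ on the right-hand sides, then deduce the result from the relation $\sim$. Your handling of the potential non-injectivity of $\phi$ (noting that surjectivity onto $\mathcal{D}_n$ together with $f \sim \phi(f)$ for every $f \in \mathcal{E}_n$ already suffices) is spot-on, since duplicates do in fact occur in the tables (e.g.\ $\mathscr{B}_{x_0,x_j^s}$ and $\mathscr{B}_{x_0^s,x_j^s}$ both correspond to $||\alpha_1,\beta_j||$).
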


\begin{proof}
The constant type and image type functions are described in Proposition \ref{prop:charRanks} and Lemma \ref{lem:specSlice} respectively. The remaining functions restricted to $\im (\eta)$ are the elements of $\mathcal{E}_n$, and a direct check shows that these are precisely the functions appearing in Lemmas \ref{lem:quiverConditionsOnSlice1p} and \ref{lem:quiverConditionsOnSlice2}. A direct check also shows that the functions of the form $|\gamma,\delta|, ||\gamma,\delta||$, and $||\gamma,\delta||^0$ appearing in Lemmas \ref{lem:quiverConditionsOnSlice1p} and \ref{lem:quiverConditionsOnSlice2} are precisely those in $\mathcal{D}_n$. The result now follows from these same lemmas. 
\end{proof}

We are now ready to prove the main result of this section, Proposition \ref{prop:eta1Orbit}.

\begin{proof}[Proof of Proposition \ref{prop:eta1Orbit}]
That $\overline{O}_{\eta(V)}\cap S(\bd^*)$ is a reduced and irreducible subvariety of $S(\bd^*)$ is immediate from Theorem \ref{thm:intersectS}, since it is a factor of the reduced and irreducible variety on the right hand side of \eqref{eq:Psiproperty}. 
To see that $\overline{O}_{\eta(V)}\cap S(\bd^*)$ is a subvariety of $\im(\eta)$, observe that if $f$ is a function of image type, then, by Lemma \ref{lem:specSlice}, $f|_{S(\bd^*)}$ attains its minimum value at $\eta(V)$. On the other hand, by Theorem \ref{thm:Porbitranks}, $f|_{\overline{O}_{\eta(V)}}$ is maximal at $\eta(V)$. Hence $f$ is constant on the intersection ${\overline{O}_{\eta(V)}\cap S(\bd^*)}$, satisfying $f(M,N) = f(\eta(V))$ for all $(M,N) \in {\overline{O}_{\eta(V)}\cap S(\bd^*)}$. Thus, $\overline{O}_{\eta(V)}\cap S(\bd^*)$ is a subvariety of $\im(\eta)$ by Lemma \ref{lem:specSlice}.

We now show that $\overline{O}_{\eta(V)}\cap S(\bd^*)\subseteq \overline{O}_{\eta(W)}\cap S(\bd^*)$ if and only if $f(V)\leq f(W)$ for all $f\in \mathcal{D}_n$. 
All constant type functions are constant on $S(\bd^*)$ and thus are constant on $\overline{O}_{\eta(V)}\cap S(\bd^*)$ and $\overline{O}_{\eta(W)}\cap S(\bd^*)$. Furthermore, as explained in the previous paragraph, all image type functions are also constant on these subvarieties.
Thus, by Theorem \ref{thm:Porbitranks}, the inclusion $\overline{O}_{\eta(V)}\cap S(\bd^*)\subseteq \overline{O}_{\eta(W)}\cap S(\bd^*)$ holds if and only if $f(\eta(V))\leq f(\eta(W))$ for all quiver type functions (i.e. those $\mathscr{U}_v, \mathscr{L}_v, \mathscr{B}_{v,w}$ which are neither of constant type nor of image type). 
However, since $\overline{O}_{\eta(V)}\cap S(\bd^*)$ and  $\overline{O}_{\eta(W)}\cap S(\bd^*)$ are subvarieties of $\im(\eta)$, we may restrict these quiver type functions to $\im(\eta)$ to see that $\overline{O}_{\eta(V)}\cap S(\bd^*)\subseteq \overline{O}_{\eta(W)}\cap S(\bd^*)$ holds if and only if $f(\eta(V))\leq f(\eta(W))$ for all $f\in \mathcal{E}_n$, 
which, by Lemma \ref{lem:upshot}, holds if and only if $f(V) \leq f(W)$ for all $f\in \mathcal{D}_n$.

Then the statement of the previous paragraph with the roles of $V, W$ reversed
shows that $\overline{O}_{\eta(V)}\cap S(\bd^*) = \overline{O}_{\eta(W)}\cap S(\bd^*)$ if and only if $f(V)= f(W)$ for all $f\in \mathcal{D}_n$.
\end{proof}

\section{Proof of the main theorem}\label{sect:mainTheorem}
In this section we prove Theorem \ref{thm:mainTheorem}, fixing the following notation throughout.
Let $Q$ be a type $D$ quiver with dimension vector $\bd$, and let $(Q^*, \bd^*)$ and $X_Q$ be as defined in and immediately before Lemma \ref{lem:QtoQn}.
Denote the total dimension of $\bd^*$ by $d^*$, and let $a = d^*-\bd^*(y_0')$ and $b = d^*-\bd^*(y_0)$. 
Let $G = GL(a+b)$, let $K=GL(a) \times GL(b)$ embedded as two blocks along the diagonal of $G$, and let $P:=P(\bd^*)$ be as in Section \ref{sect:blockstructure}. 
Recall the open subscheme $\im\Psi \subset K \backslash G$ from \eqref{eq:Psimap}.
Let $S:=S(\bd^*)\subset K\backslash G$ and $\eta: \rep_{Q^*}(\bd^*)\rightarrow S$ be as in Section \ref{sect:setupNotation}. Finally, we use the notation from Section \ref{sect:bundles} throughout the proof.

\begin{proof}[Proof of Theorem \ref{thm:mainTheorem}]
(i) The map of \eqref{eq:reptoKG} is defined by $\zO^\dagger:=\overline{\eta(\zO*1)\cdot P}$.  
Note that $\zO^\dagger$ is indeed a $P$-orbit closure in $K\backslash G$ since it is an irreducible, $P$-stable, closed subvariety of $K\backslash G$, which has finitely many $P$-orbits. 

To see that \eqref{eq:reptoKG} is injective, we first show that $\zO^\circ \mapsto \eta(\zO^\circ*1)\cdot P$ is an injective map between the associated sets of orbits. Since each orbit closure is determined by its open orbit, this is enough.
Choose points $V_i \in \zO_i^\circ$ ($i=1,2$), and suppose that $\eta(V_1*1)$ and $\eta(V_2*1)$ lie in the same $P$-orbit.
Then, in particular, $O_{\eta(V_1*1)}\cap S(\bd^*) = O_{\eta(V_2*1)}\cap S(\bd^*)$. Thus,
by Proposition \ref{prop:eta1Orbit}, we get that $f(V_1*1)=f(V_2*1)$ for all $f \in\cD_n$, which implies that $V_1$ and $V_2$ lie in the same $GL(\bd)$-orbit by Proposition \ref{prop:quiverranksorbit}.

(ii) We have $\zO^\dagger \cap \im\Psi \simeq (\zO^\dagger \cap S)\times Z$ by Theorem \ref{thm:intersectS}.
Since the orbit closure $\zO^\dagger$ is reduced and irreducible and $\im\Psi$ is open in $K\backslash G$, the left hand side of this isomorphism is reduced and irreducible. Consequently, the factor $\zO^\dagger \cap S$ on the right hand side is reduced and irreducible. 
Take a projection morphism
\begin{equation}\label{eq:pi}
\pi\colon S \to \rep_{Q^*}(\bd^*) \qquad \text{satisfying} \quad \pi\circ \eta = 1_{\rep_{Q^*}(\bd^*)}.
\end{equation}
 This makes $S$ the total space of a vector bundle on $\rep_{Q^*}(\bd^*)$.  Denote the pullback to $S$ of the open subvariety $X_Q \subseteq \rep_{Q^*}(\bd^*)$ of Lemma \ref{lem:QtoQn} by $S^\circ$, which is therefore open in $S$.  Thus $\zO^\dagger \cap S^\circ$ is reduced and irreducible. Furthermore, since $\zO^\dagger$ is a closed subscheme of $K\backslash G$, the intersection $\zO^\dagger \cap S^\circ$ is a closed subscheme of $S^\circ$.
 
Next we want to show that $\zO^\dagger \cap S^\circ = \eta(\zO\cdot GL(\bd^*))$.
First observe that $\eta(\zO\cdot GL(\bd^*))$ is a reduced and irreducible closed subscheme of $S^\circ$. This follows since $\eta$ restricted at source to $X_Q$ and at target to $S^\circ$ is a closed embedding, and $\zO\cdot GL(\bd^*) = \zO *_{GL(\bd)} GL(\bd^*)$ is a reduced and irreducible closed subscheme of $X_Q$.
Thus, it is enough to check that the equality holds at the level of $k$-points (recall $k$ is algebraically closed).

Let $V\in \zO^\circ$, so that $\zO^\dagger$ is the closure of the $P$-orbit through $\eta(V)$. 
Let $(\zO^\dagger)^\circ$ denote the open $P$-orbit in $\zO^\dagger$. Then $(\zO^\dagger)^\circ \cap S^\circ$ is open in $\zO^\dagger\cap S^\circ$. By Proposition \ref{prop:eta1Orbit}, 
the points in $(\zO^\dagger)^\circ\cap S^\circ$ are those $\eta(W)$, $W\in X_Q$, with $f(W)= f(V)$ for all $f\in \mathcal{D}_n$.
On the other hand, the orbit $\zO^\circ \cdot GL(\bd^*)$ is dense in $\zO \cdot GL(\bd^*)$ and consists of those $W\in X_Q$ with $f(V) = f(W)$ for all $f\in \mathcal{D}_n$. Thus, the set of $k$-points $\{\eta(W)\mid f(W) = f(V),\, f\in \mathcal{D}_n\}$ is dense in the set of $k$-points of $\eta(\zO\cdot GL(\bd^*))$. Since $\zO^\dagger \cap S^\circ$ and $\eta(\zO\cdot GL(\bd^*))$ are both reduced and irreducible closed subschemes of $S^\circ$, and they have a common dense subset of $k$-points, the two varieties are equal.

Finally, the fact that $\eta$ is a closed embedding gives the first isomorphism below, while Proposition \ref{prop:fiberbundle}(ii) (where $G$ is set to $GL(\bd^*)$ and $H$ set to $GL(\bd)$) along with Lemma \ref{lem:KXHG} (using the same $\phi$ in the proof of Lemma \ref{lem:trivialBundleForMainTheorem}) gives the second isomorphism of
\begin{equation}
\zO^\dagger \cap S^\circ\simeq \zO \cdot GL(\bd^*) \simeq \zO \times (GL(\bd) \backslash GL(\bd^*)).
\end{equation}
Combining with the first isomorphism in this part of the proof gives \eqref{eq:mainthmiso}.

(iii) Restricting to the open subscheme $\im \Psi$ and torus $T_S \leq T$ of \eqref{eq:TSdef} induces a homomorphism of equivariant $K$-groups
\begin{equation}
K_T(K \backslash G) \to K_{T_S}(\im \Psi).
\end{equation}
Furthermore, the $T_S$-equivariant isomorphism of Theorem \ref{thm:intersectS} induces 
\begin{equation}
K_{T_S}(\im \Psi) \xto{\sim} K_{T_S}(S \times Z).
\end{equation}
The fact that $Z$ is smooth gives a $T_S$-equivariant regular embedding and induced restriction map \cite[\S5.2.5]{CGbook}:
\begin{equation}\label{eq:StoSZ}
S \into S \times Z, \qquad s \mapsto (s,1,1,1),
\qquad K_{T_S}(S \times Z) \to K_{T_S}(S).
\end{equation}
Tracing the class $[\cO_{\zO^\dagger}] \in K_T(K \backslash G)$ through the maps above results in $[\cO_{\zO^\dagger \cap S}] \in K_{T_S}(S)$, where the last step uses the fact that $\zO^\dagger$ is Cohen-Macaulay \cite[Thm.~1.2]{Brion03} (see also \cite[Theorem 4.4.7]{Perrin14}) so that the higher Tor sheaves as in \cite[Rmk.~5.2.7]{CGbook} vanish.

Recall that the maximal tori $T(\bd^*) \leq GL(\bd^*)$ and $T(\bd) \leq GL(\bd)$ consist of matrices which are diagonal in each factor.
We have an isomorphism $(\theta|_{T(\bd^*)})^{-1}\colon T_S \xto{\sim} T(\bd^*)$ where $\theta$ is defined in  \eqref{eq:blockdiag}.
It is straightforward to see that the vector bundle map $\pi$ in \eqref{eq:pi} 
can be chosen to be equivariant with respect to this isomorphism (we omit an explicit formula, since it is not needed below).  The restriction of $\pi$ to the $T_S$-stable open subvariety $S^\circ$ is thus an equivariant vector bundle over $X_Q$, inducing maps on equivariant Grothendieck groups
\begin{equation}
K_{T_S}(S) \onto K_{T_S}(S^\circ) \xto{\sim} K_{T(\bd^*)}(X_Q)
\end{equation} 
sending the class of a sheaf to the class of its restriction along the equivariant section $\eta|_{X_Q}$ (see \cite[Cor.~5.4.21]{CGbook} or \cite[Thm.~1.7]{Thomason88}).
In particular, the class at the end of the previous paragraph $[\cO_{\zO^\dagger \cap S}] \in K_{T_S}(S)$ is sent to $[\cO_{\zO  \cdot GL(\bd^*)}] \in K_{T(\bd^*)}(X_Q)$. 

We obtain homomorphisms $K_{T(\bd^*)}(X_Q) \to K_{T(\bd)}(X_Q)$ by restricting the torus action via the identification $T(\bd) \leq T(\bd^*)$, and $K_{T(\bd)}(X_Q) \to K_{T(\bd)}(\rep_Q(\bd))$ by pulling back classes along the $T(\bd)$-equivariant regular embedding $\rep_Q(\bd) \into X_Q$, (see Lemma \ref{lem:QtoQn}).
Composing these two maps with the sequence of maps from the above paragraphs completes the definition of the homomorphism $f$ in the theorem statement.  The fact that the last two steps send $[\cO_{\zO\cdot GL(\bd^*)}] \mapsto [\cO_{\zO}]$
uses that $\zO\cdot GL(\bd^*) \simeq \zO^\dagger \cap S$ is Cohen-Macaulay as well, since it is the restriction of the Cohen-Macaulay variety $\zO^\dagger$ along the regular embedding \eqref{eq:StoSZ}.  (Algebraically, this amounts to the quotient of a Cohen-Macaulay ring by a regular sequence still being Cohen-Macaulay.) This completes the proof.
\end{proof}

\section{Failure of simultaneous compatible Frobenius splitting}\label{sect:Frobenius}

In \cite{KR}, we showed that each type $A$ representation variety has a Frobenius splitting which compatibly splits all of its quiver loci. 
This Frobenius splitting is induced from a Frobenius splitting of a partial flag variety which compatibly splits all of its Schubert varieties (in fact, all of its projected Richardson varieties \cite{KnutsonLamSpeyer}). 
In this section, we show that the situation for type $D$ representation varieties is different: 
we provide an example of a type $D$ representation variety in which it is impossible to simultaneously compatibly Frobenius split all codimension-$1$ quiver loci, and a corresponding example, via Theorem \ref{thm:mainTheorem}, of a $K\backslash G$ in which it is impossible to compatibly split all $B$-orbit closures. The latter example implies that it is not possible, in general, to compatibly split all $B$-orbit closures in double Grassmannians, since each $K\backslash G$ can be identified with an open subvariety of a double Grassmannian. 
We assume that our base field $k$ is perfect of characteristic $p>0$, and we refer the reader to \cite{BrionKumar} or \cite{SmithZhang} for background on Frobenius splitting.

\begin{example}\label{eg:typeDFsplit}
Consider the bipartite type $D$ representation space $\rep_Q(\bd)$ consisting of representations of the following form:
\begin{equation*}\label{eq:D4}
\vcenter{\hbox{\begin{tikzpicture}[point/.style={shape=circle,fill=black,scale=.5pt,outer sep=3pt},>=latex]
   \node[outer sep=-2pt] (x) at (-1.5,0.75) {$k^1$};
   \node[outer sep=-2pt] (1) at (-1.5,-0.75) {$k^1$};
  \node[outer sep=-2pt] (2) at (0,0) {$k^2$};
   \node[outer sep=-2pt] (3) at (1.5,0) {$k^1$,};
   \node[outer sep=-2pt] (4) at (6,0) {$A = \begin{bmatrix}a_1 & a_2\end{bmatrix}, B = \begin{bmatrix}b_1& b_2\end{bmatrix}, C = \begin{bmatrix}c_1 &c_2\end{bmatrix}$ };
  \path[->]
  	(x) edge node[auto] {$A$} (2) 
  	(1) edge node[below] {$B$} (2) 
	(3) edge node[auto] {$C$} (2);
	   \end{tikzpicture}}}.
\end{equation*}
Identify $k[\rep_Q(\bd)]$ with the polynomial ring $k[a_1,a_2,b_1,b_2,c_1,c_2]$ so that the three codimension $1$ quiver loci have prime defining ideals $\langle a_1b_2-a_2b_1\rangle$, $\langle a_1c_2-a_2c_1\rangle$, $\langle b_1c_2-b_2c_1\rangle$. 
Thus each codimension $1$ quiver locus is a determinantal variety, and so is Frobenius split.

Next observe that if there were a Frobenius splitting $\phi$ of $\rep_Q(\bd)$ which \emph{simultaneously} compatibly split all three codimension $1$ quiver loci, then the ideal 
 \[I = (\langle a_1b_2-a_2b_1\rangle \cap \langle a_1c_2-c_1a_2\rangle) + \langle b_1c_2-c_1b_2\rangle\] 
would be compatibly split by $\phi$, because sums and intersections of compatibly split ideals are compatibly split. 
But $I$ is not compatibly split:  
$I$ is generated by $f = (a_1b_2-a_2b_1)(a_1c_2-c_1a_2)$ and $g = b_1c_2-c_1b_2$,
 thus the only elements in $I$ that are homogeneous of degree $3$ have the form $hg$ where $h$ is a linear form. In particular, $c_1(a_1b_2-b_1a_2)\notin I$. However, 
$c_1^2(a_1b_2-b_1a_2)^2 =  b_1c_1f + (a_1a_2b_1c_1-a_1^2b_2c_1)g \in I$. Thus $I$ is not radical, and so is not compatibly split (because compatibly split ideals are radical).
\end{example}

In the following example, we let $U, A, \Omega, \zO^\dagger$ be as in Theorem \ref{thm:mainTheorem}.

\begin{example}\label{eg:FsplittingGK}
Let $G = GL(8)$ and $K = GL(4)\times GL(4)$. These are the $G$ and $K$ of Theorem \ref{thm:mainTheorem} which are associated to $Q$ and $\bd$ from Example \ref{eg:typeDFsplit}.  We claim that there is no Frobenius splitting of $K\backslash G$ which compatibly splits all $B$-orbit closures. 
Indeed, suppose otherwise and let $\phi$ be such a Frobenius splitting. 
Then $\phi|_U$ is a Frobenius splitting of the open set $U\subseteq K\backslash G$ which compatibly splits all intersections of $B$-orbit closures with $U$.

In particular, each $U\cap \zO^\dagger$ is compatibly split by $\phi|_U$.  Thus, by Theorem \ref{thm:mainTheorem} (ii) there is a Frobenius splitting of $\rep_Q(\bd)\times A$ that compatibly splits each $\Omega\times A$. This then implies that there is a Frobenius splitting of $\rep_Q(\bd)$ which compatibly splits each quiver locus $\Omega$, which is false by Example \ref{eg:typeDFsplit}.
\end{example}

\begin{remark}
Example \ref{eg:FsplittingGK} addresses \cite[Rmk.~4.5]{AP16}; it explains why authors Achinger and Perrin did not find the Frobenius splitting they needed in order to obtain a proof in the spirit of \cite{MR85} of their result \cite[Thm.~1]{AP16} that $B$-orbit closures in certain multiple flag varieties are normal, Cohen-Macaulay, and have rational resolutions.
\end{remark}

\begin{remark}
At the time of writing this paper, we do not know whether or not every type $D$ quiver locus is Frobenius split (see \cite[Prob.~8.17]{zwarasurvey}).
\end{remark}

\bibliographystyle{alpha}
\bibliography{typeD}
\end{document}